\theoremstyle{plain}
\theoremstyle{plain}
\newtheorem{thm}{\protect\theoremname}
\theoremstyle{remark}
\newtheorem*{rem*}{\protect\remarkname}
\theoremstyle{remark}
\newtheorem{rem}{\protect\remarkname}
\theoremstyle{plain}
\newtheorem*{assumption*}{\protect\assumptionname}
\theoremstyle{plain}
\newtheorem{lem}{\protect\lemmaname}
\theoremstyle{plain}
\newtheorem{cor}{\protect\corollaryname}
\theoremstyle{plain}
\newtheorem{prop}{\protect\propositionname}
\theoremstyle{definition}
\newtheorem*{example*}{\protect\examplename}
\newtheorem{ex}{\protect\examplename}
\theoremstyle{definition}
\newtheorem*{algorithm}{\protect\algorithmname}
\theoremstyle{definition}
\newtheorem{asm}{\protect\assumptionname}
\numberwithin{equation}{section}
\numberwithin{figure}{section}
\newcommand{\1}[1]{\mathbbm{1}_{\{#1\}}}
\renewcommand{\O}{\mathcal{O}}
\newcommand{\C}{\mathcal{C}}
\newcommand{\E}{\mathbb{E}}
\newcommand{\V}{\mathbb{V}}
\renewcommand{\P}{\mathbb{P}}
\newcommand{\Z}{\mathbb{Z}}
\newcommand{\mS}{\mathcal{S}}
\newcommand{\R}{\mathbb{R}}
\newcommand{\N}{\mathbb{N}}
\newcommand{\ov}[1]{\overline{#1}}
\newcommand{\U}{\mathrm{U}}
\newcommand{\RW}{\text{\normalfont RW}}
\newcommand{\SB}{\text{\normalfont SB}}
\newcommand{\STE}{\text{\normalfont ST}}
\newcommand{\ISE}{\text{\normalfont IS}}
\DeclareRobustCommand{\stirling}{\genfrac\{\}{0pt}{}}
\newcommand{\BrownianMotion}[7]{
\draw[#6] (#1,#2)
\foreach \x in {1,...,#3}
{   -- ++(#4,rand*#5)
}
node[below right] {#7};
}
  \providecommand{\algorithmname}{Algorithm}
\providecommand{\assumptionname}{Assumption}
  \providecommand{\examplename}{Example}
  \providecommand{\lemmaname}{Lemma}
  \providecommand{\propositionname}{Proposition}
  \providecommand{\remarkname}{Remark}
\providecommand{\corollaryname}{Corollary}
\providecommand{\theoremname}{Theorem}
\begin{document}

\title[Simulation of L\'evy Extrema]{Geometrically Convergent Simulation of the Extrema of L\'evy Processes}

\author{Jorge Gonz\'{a}lez C\'{a}zares, Aleksandar Mijatovi\'{c}, \and
	Ger\'{o}nimo Uribe Bravo}

\address{Department of Statistics, University of Warwick, \& The Alan Turing Institute, UK}

\email{jorge.gonzalez-cazares@warwick.ac.uk}

\address{Department of Statistics, University of Warwick, \& The Alan Turing Institute, UK}

\email{a.mijatovic@warwick.ac.uk}

\address{Universidad Nacional Aut\'{o}noma de M\'{e}xico, M\'{e}xico}

\email{geronimo@matem.unam.mx}

\begin{abstract}
We develop a novel approximate simulation algorithm for the joint 
law of the position, the running supremum and the time of the supremum of 
a general L\'evy process at an arbitrary finite time. We identify the law 
of the error in simple terms. We prove that the error decays geometrically in 
$L^p$ (for any $p\geq 1$) as a function of the computational cost, 
in contrast with the polynomial decay for the approximations available in the literature. 
We establish a central limit theorem and construct non-asymptotic and asymptotic confidence intervals
for the corresponding Monte Carlo estimator. 
We prove that the 
multilevel Monte Carlo 
estimator has optimal computational complexity (i.e. of order 
$\epsilon^{-2}$ if the mean squared error is at most $\epsilon^2$) for locally 
Lipschitz and barrier-type functions of the triplet 
and develop an unbiased version of the estimator. 
We illustrate the  performance of the algorithm with numerical examples. 
\end{abstract}

\keywords{supremum, L\'evy process, simulation, Monte Carlo estimation, geometric convergence, multilevel Monte Carlo}

\subjclass{Primary: 60G51, 65C05}

\maketitle

\section{Introduction}
\label{sec:intro}
Consider a L\'evy processes $X=(X_t)_{t\geq0}$ over the time interval $[0,T]$ 
for a given constant time $T>0$. The triplet $\chi=(X_T,\ov{X}_T,\tau_T)$, 
consisting of the position $X_T$, the supremum $\ov{X}_T$ of $X$ over the 
interval $[0,T]$ and the first time $\tau_T$ at which $X$ attains its supremum, 
plays a key role in numerous areas of applied probability (e.g. ruin 
probabilities in insurance mathematics~\cite{MR2099651}, 
barrier and lookback options and technical trading in mathematical 
finance~\cite{MR1936593,MR1932381,MijatovicDrawdown}, 
buffer size in queuing theory~\cite{MR1978607,MR3343283} and the 
prediction of the ultimate supremum and its time in optimal 
stopping~\cite{MR2932671,MR3273683}, to name a few). 
However, the information about the law of $\ov{X}_T$ (let alone of $\chi$) is 
very difficult to extract from the characteristics of the L\'evy process 
$X$~\cite{MR3098676}. Moreover, the known properties of the law of $\ov{X}_T$ 
are typically not explicit in the characteristics~\cite{MR3531705}, making its 
exact simulation challenging (e.g. the first exact simulation algorithm 
for the supremum of a stable process was developed recently~\cite{MR4032169}).

The central importance of $\chi$ in applied probability, combined with its 
intractability when $X$ is \emph{not} compound Poisson with drift, has lead to 
an abundance of works on its approximation over the last quarter of the 
century~\cite{MR1384357,MR1482707,MR1805321,MR2867949,MR2851060,MR2996014,
	MR2802466,MR2759203,MR2895413,MR3138603,MR3723380,MR3784492,ZoomIn}. 
These methodologies naturally yield Monte Carlo (MC) and Multilevel Monte Carlo 
(MLMC) algorithms for $\chi$. Without exception, the errors of these algorithms 
achieve \emph{polynomial} decay in the computational cost. The following 
natural question arises: \emph{does there exist an algorithm whose error decays 
geometrically in the cost?} The simple and general algorithm~\nameref{alg:SBA} below 
answers this question affirmatively. Subsection~\ref{subsec:Contribution} gives 
an intuitive introduction to the algorithm and describes its main properites, 
while Subsection~\ref{subsec:Related_Lit} 
compares~\nameref{alg:SBA} with the existing literature, 
cited at the beginning of this paragraph. (See also the YouTube  \href{https://youtu.be/P3vHmJUCFbU}{presentation}~\cite{Presentation_AM} for an overview of the paper.)

\subsection{Contribution\label{subsec:Contribution}}
The present paper has two main contributions: (I) a novel 
stick-breaking approximation (SBA) for $\chi$, given 
in Equation~\eqref{eq:SBA} and sampled by the 
algorithm~\nameref{alg:SBA} 
below, and an explicit characterisation of the law of its error 
(see Theorem~\ref{thm:error} below); (II) an analysis of the SBA 
as a Monte Carlo 
algorithm for functions of interest in applied probability. 
Contribution~(II),  described in Section~\ref{sec:main} below, includes the 
geometric decay of the strong error and the central limit theorem for the MC 
estimator based on~\nameref{alg:SBA} for various classes of functions 
of $\chi$ arising in applications (e.g. locally Lipschitz and barrier-type). 
Moreover, Section~\ref{sec:main} develops the MLMC and unbiased extensions 
of the SBA, both of which have optimal computational complexity. 
In the present subsection we describe Contribution~(I). 

The SBA is based on the stick-breaking representation of the triplet 
$\chi$, derived from the description of the law of the concave 
majorant of a L\'evy process given in~\cite{MR2978134}.
More precisely, the stick-breaking representation of $\chi$ states 
that the following a.s. equality holds  
\begin{equation}
\label{eq:chi}
\chi=(X_T, \ov X_T,\tau_T)=\sum_{k=1}^\infty
\left(Y_{L_{k-1}}-Y_{L_k},(Y_{L_{k-1}}-Y_{L_k})^+,\ell_k\cdot \1{Y_{L_{k-1}}-Y_{L_k}>0}\right),
\end{equation} 
where the L\'evy process $Y$ has the same law as $X$ and is 
independent of the stick-breaking process, $\ell=(\ell_n)_{n\in\N}$ 
on $[0,T]$, based on the uniform law $\U(0,1)$, 
i.e., $L_0=T$, $\ell_n=V_nL_{n-1}$ and $L_n=L_{n-1}-\ell_n$ for 
$n\in\N$ where $(V_n)_{n\in\N}$ is a $\U(0,1)$-iid sequence, 
see Figure~\ref{fig:stick}. 
(In~\eqref{eq:chi} and throughout the paper we denote 
$x^+=\max\{x,0\}$ for any $x\in\R$.) 
The coupling $(X,\ell,Y)$ satisfying the almost sure equality 
in~\eqref{eq:chi} is constructed in Subsection~\ref{subsec:coupling} 
below. Note that, in particular, it satisfies $Y_T=X_T$ a.s. 

\begin{figure}[ht]
	\begin{tikzpicture}
	\draw [thin, draw=gray, ->] (0,0) -- (15.5,0);
	\node[circle, fill=black, scale=.5, label=left:{$0$}] at (0,0){}{};
	\node[circle, fill=black, scale=.5, label=above:{$L_0=T$}] at (15.5,0){}{};
	\node[circle, fill=black, scale=.5, label=above:{$L_1$}] at (7.612,0){}{};
	\node[circle, fill=black, scale=.5, label=above:{$L_2$}] at (5.285,0){}{};
	\node[circle, fill=black, scale=.5, label=above:{$L_3$}] at (2.030,0){}{};
	\node[circle, fill=black, scale=.5, label=above:{$L_4$}] at (0.603,0){}{};
	
	\draw [
	decoration={
		brace,
		mirror,
		raise=0.15cm
	}, decorate] (7.622,0) -- (15.5,0) node 
	[pos=0.5,anchor=north,yshift=-.3cm] {$\ell_1$}; 
	
	\draw [
	decoration={
		brace,
		mirror,
		raise=0.15cm
	}, decorate] (5.295,0) -- (7.602,0) node 
	[pos=0.5,anchor=north,yshift=-.3cm] {$\ell_2$}; 
	
	\draw [
	decoration={
		brace,
		mirror,
		raise=0.15cm
	},
	decorate] (2.040,0) -- (5.275,0) node 
	[pos=0.5,anchor=north,yshift=-.3cm] {$\ell_3$};
	
	\draw [
	decoration={
		brace,
		mirror,
		raise=0.15cm
	}, decorate] (0.603,0) -- (2.020,0) node 
	[pos=0.5,anchor=north,yshift=-.3cm] {$\ell_4$};
	\end{tikzpicture}
	\caption{\footnotesize 
		The figure illustrates the first $n=4$ sticks of a 
		stick-breaking process. The increments of $Y$ in~\eqref{eq:chi} are taken 
		over the intervals $[L_k,L_{k-1}]$ of length $\ell_k$. Crucially, the time  
		$L_n$ featuring in the vector $(Y_{L_n},\ov{Y}_{L_n},\ov\tau_{L_n}(Y))$
		in~\eqref{eq:error-law} of Theorem~\ref{thm:error} is exponentially
		small in $n$ and independent of $Y$.\label{fig:stick}}
\end{figure}

Given the representation in~\eqref{eq:chi}, the SBA is defined as follows:
\begin{equation}
\label{eq:SBA}
\begin{split}
\chi_n =\sum_{k=1}^n& 
\left(Y_{L_{k-1}}-Y_{L_k},(Y_{L_{k-1}}-Y_{L_k})^+,\ell_k\cdot \1{Y_{L_{k-1}}-Y_{L_k}>0}\right) \\
+&
\left(Y_{L_n},Y_{L_n}^+,L_{n}\cdot \1{Y_{L_n}>0}\right).
\end{split}
\end{equation}
Since the residual sum $\sum_{k=n+1}^\infty \left(Y_{L_{k-1}}-Y_{L_k}\right)$ equals 
$Y_{L_n}$ for any $n\in\N$, the first component of $\chi_n$ coincides with 
that of $\chi$, while, as we shall see in Theorem~\ref{thm:error} below, 
$Y_{L_n}^+$ and $L_n\cdot \1{Y_{L_n}>0}$ reduce the errors of the 
corresponding partial sums in~\eqref{eq:SBA}. The coupling 
$(X,\ell,Y)$ makes it possible to compare $\chi$ and $\chi_n$ 
on the same probability space and analyse the strong error 
$\chi-\chi_n$. 

Denote the distribution of $X_t$ by $F(t,x)=\P(X_t\leq x)$, where  $x\in\R$ 
and $t>0$. Then an algorithm that simulates exactly from the law of the SBA 
$\chi_n$ is given as follows:

\begin{algorithm} 
	\caption{SB-Alg}
	\label{alg:SBA}
	\begin{algorithmic}[1]
		\Require{$n\in\N$, fixed time horizon $T>0$} 
		\State{Set $\Lambda_0=T$, $\mathcal{X}_0=(0,0,0)$}
		\For{$k=1,\ldots,n$}
		\State{Sample $\upsilon_k\sim \U(0,1)$ and put $\lambda_k=\upsilon_k\Lambda_{k-1}$ 
			and $\Lambda_k=\Lambda_{k-1}-\lambda_{k}$}
		\State{Sample $\xi_k\sim F(\lambda_k,\cdot)$ and put
			$\mathcal{X}_k= 
			\mathcal{X}_{k-1}+ 
			(\xi_k,\xi^+_k,\lambda_k\cdot \1{\xi_k>0})$} 
		\EndFor
		\State{Sample $\varsigma_n\sim F(\Lambda_n,\cdot)$ and \Return $\mathcal{X}_n+(\varsigma_n,\varsigma^+_n,\Lambda_n\cdot \1{\varsigma_n>0})$}
	\end{algorithmic}
\end{algorithm}

\nameref{alg:SBA} clearly outputs a random vector with the 
same law as $\chi_n$ in~\eqref{eq:SBA}, using a total of $n+1$ 
sampling steps. Theorem~\ref{thm:error} and Section~\ref{sec:main} 
below show that $\chi_n$ in~\eqref{eq:SBA} is an increasingly 
accurate approximation of $\chi$ as $n$ grows. 
Intuitively this is because, by~\eqref{eq:levy-minorant}, 
the sum in the definition of $\chi_n$ consists of the first $n$ terms 
taken in a size-biased order (see Subsection~\ref{subsec:coupling} 
below) making the remainder very small. It will become clear 
from Theorem~\ref{thm:error} that the last step 
in~\nameref{alg:SBA} reduces the error further. The computational 
cost of the algorithm is proportional to $n$ if we can sample any 
increment of $X$ in constant time. We stress 
that~\nameref{alg:SBA} is \textit{not} a version of the random walk 
approximation (see Equation~\eqref{eq:RWA} below) on a 
randomised grid as it does not require the computation of either $\max$ 
or $\arg\max$ of a discretisation of $X$. Instead, the approximation for the 
supremum and its time are obtained by summing non-negative numbers, 
making~\nameref{alg:SBA} numerically very stable. The convergence 
analysis of~\nameref{alg:SBA} relies on the following result, 
which describes explicitly the law of its error.

The following notation, needed to state Theorem~\ref{thm:error}, 
will be used throughout the paper: for a right-continuous 
function $f:[0,\infty)\to\R$ with left-hand limits, we denote by 
$\ov f_t=\sup\{f_s:s\in[0,t]\}$ its supremum over the interval 
$[0,t]$ and by $\tau_t(f)=\inf\{s\in[0,t]:\ov{f}_s=\ov{f}_t\}$ the 
first time the supremum $\ov f_t$ is attained. 

\begin{thm}\label{thm:error}
Assume the L\'evy process $X$ is not compound Poisson with 
drift and let $(X,\ell,Y)$ be the coupling constructed in 
Subsection~\ref{subsec:coupling} below, satisfying~\eqref{eq:chi}. 
For any $n\in\N$, define the vector of errors of the SBA by 
\begin{equation}
\label{def:Delta}
\begin{split}
\chi-\chi_n &=\big(0,\Delta^\SB_n,\delta^\SB_n\big)=
(0,\Delta_n-Y_{L_n}^+,\delta_n-L_n\cdot \1{Y_{L_n}>0}),
\quad\text{ where}\\ 
\Delta_n &=\ov X_T-\sum_{k=1}^n(Y_{L_{k-1}}-Y_{L_k})^+
\quad\text{and}\quad
\delta_n=\tau_T-\sum_{k=1}^n \ell_k\cdot \1{Y_{L_{k-1}}-Y_{L_k}>0}.
\end{split}
\end{equation} 
Then, conditionally on $L_n$,
\begin{equation}
\label{eq:error-law}
\begin{split}
(Y_{L_n},\Delta_n,\delta_n) 
& \overset{d}{=}\big(Y_{L_n},\ov Y_{L_n},
\tau_{L_n}\big(Y\big)\big),\quad\text{and hence}\\
\big(\Delta^\SB_n,\delta^\SB_n\big) 
& \overset{d}{=}\big(\ov Y_{L_n}-Y_{L_n}^+,
\tau_{L_n}\big(Y\big)-L_n\cdot \1{Y_{L_n}>0}\big).
\end{split}
\end{equation}
Moreover, the inequalities $0\leq\Delta^\SB_{n+1}\leq\Delta^\SB_n\leq\Delta_n$, 
$0\leq\delta_n\leq L_n$ and $|\delta^\SB_n|\leq L_n$ hold a.s. 
\end{thm}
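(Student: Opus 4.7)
My plan is to reduce both assertions of the theorem to an application of the a.s.\ identity~\eqref{eq:chi} at the smaller horizon $L_n$, made possible by the renewal property of stick-breaking. Using~\eqref{eq:chi} I would first rewrite the errors as tails
\[
\Delta_n=\sum_{k=n+1}^\infty (Y_{L_{k-1}}-Y_{L_k})^+,\qquad \delta_n=\sum_{k=n+1}^\infty \ell_k\cdot 1_{\{Y_{L_{k-1}}-Y_{L_k}>0\}};
\]
after the shift $\tilde\ell_j:=\ell_{n+j}$, $\tilde L_j:=L_{n+j}$ these become measurable functionals of the pair $(Y|_{[0,L_n]},\tilde\ell)$, while $Y_{L_n}$ is a functional of $Y|_{[0,L_n]}$ alone.

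The key step is the identification of the conditional joint law of $(Y|_{[0,L_n]},\tilde\ell)$ given $L_n$. Writing $\ell_{n+j}=V_{n+j}\,L_n\prod_{i=1}^{j-1}(1-V_{n+i})$ with $(V_i)_{i\in\N}$ iid $U(0,1)$ shows that, conditionally on $L_n$, $\tilde\ell$ is a stick-breaking process on $[0,L_n]$ based on $U(0,1)$ and independent of $\sigma(V_1,\dots,V_n)$; since $Y\perp\ell$ by construction, this independence extends to $Y$. Hence, conditionally on $L_n$, the pair $(Y|_{[0,L_n]},\tilde\ell)$ is distributed as a L\'evy process of length $L_n$ paired with an independent stick-breaking process on $[0,L_n]$. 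Invoking~\eqref{eq:chi} at horizon $L_n$ as a distributional identity on this pair delivers
\[
(Y_{L_n},\Delta_n,\delta_n)\overset{d}{=}(Y_{L_n},\ov Y_{L_n},\tau_{L_n}(Y))\quad\text{given }L_n,
\]
which is the first line of~\eqref{eq:error-law}; the second line then follows by subtracting $(Y_{L_n}^+, L_n\cdot 1_{\{Y_{L_n}>0\}})$, which is a deterministic function of the common first coordinate.

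The pathwise inequalities I would read off the tail representations directly. Telescoping together with $Y_{L_k}\to Y_0=0$ a.s.\ (by right-continuity of $Y$ at $0$) gives $\Delta_n\ge\sum_{k=n+1}^\infty(Y_{L_{k-1}}-Y_{L_k})=Y_{L_n}$, and since also $\Delta_n\ge 0$, we obtain $\Delta_n\ge Y_{L_n}^+$ and hence $0\le\Delta^\SB_n\le\Delta_n$. The monotonicity $\Delta^\SB_{n+1}\le\Delta^\SB_n$ reduces, after noting $\Delta_n-\Delta_{n+1}=(Y_{L_n}-Y_{L_{n+1}})^+$, to the elementary inequality $(a-b)^++b^+\ge a^+$ valid for all $a,b\in\R$, applied at $a=Y_{L_n}$, $b=Y_{L_{n+1}}$. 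Finally, $0\le\delta_n\le L_n$ is immediate from $\sum_{k=n+1}^\infty\ell_k=L_n$, and $|\delta^\SB_n|\le L_n$ follows because both $\delta_n$ and $L_n\cdot 1_{\{Y_{L_n}>0\}}$ lie in $[0,L_n]$.

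The main obstacle is the conditional identification carried out in the second paragraph: the renewal property of stick-breaking must be combined with the independence $Y\perp\ell$ so that~\eqref{eq:chi} can be invoked at the \emph{random} horizon $L_n$ \emph{jointly} with the endpoint $Y_{L_n}$; once this is in place, the remainder of the proof is a bookkeeping of tail sums and a one-line algebraic inequality.
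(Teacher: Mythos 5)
Your proposal is correct and follows essentially the same route as the paper's proof: rewrite $(Y_{L_n},\Delta_n,\delta_n)$ as the tail sum over faces, observe that given $L_n$ the shifted sticks form a stick-breaking process on $[0,L_n]$ independent of $Y$, invoke the distributional identity of~\eqref{eq:levy-minorant}/\eqref{eq:chi} at horizon $L_n$, and deduce the monotonicity from the same elementary inequality $(a-b)^++b^+\geq a^+$. The only cosmetic difference is that you obtain $\Delta_n\geq Y_{L_n}^+$ pathwise by telescoping, whereas the paper reads $\Delta_n^\SB\geq 0$ off the second identity in law in~\eqref{eq:error-law}; both are fine.
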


Non-asymptotic (i.e. for fixed $n$) explicit descriptions of the law of the 
error, such as~\eqref{eq:error-law} in Theorem~\ref{thm:error}, are not common 
among the simulation algorithms for the supremum and related functionals of the 
path. Since $L_n$ and $Y$ are independent, the representation 
in~\eqref{eq:error-law} is easy to work with and provides a cornerstone for the 
results of Section~\ref{sec:main}. 
Note that, by Theorem~\ref{thm:error},
the sequences $(\Delta_n^\SB)_{n\in\N}$, $(\Delta_n)_{n\in\N}$ and 
$(\delta_n)_{n\in\N}$ are nonincreasing almost surely and converge to $0$. 
Furthermore, the following observations based on Theorem~\ref{thm:error} 
motivate the final step in~\nameref{alg:SBA} (i.e. the inclusion of the last 
summand in the definition in~\eqref{eq:SBA}): (I) the tail of the error $\Delta^\SB_n$ may 
be strictly lighter than that of $\Delta_n$ (as $\ov X_t-X_t^+=\min\{\ov X_t,
\ov X_t-X_t\}$ and $\ov X_t-X_t\overset{d}{=}\sup_{s\in[0,t]}(-X_s)$ for all 
$t>0$~\cite[Prop.~VI.3]{MR1406564}); (II) for a large class of L\'evy processes, 
$\delta^\SB_n$ is \emph{asymptotically centred} at $0$, i.e. 
$\E[\delta_n^\SB/L_n]\to 0$ as $n\to\infty$, while $\E[\delta_n/L_n]$ converges 
to a strictly positive constant (see Proposition~\ref{prop:E_tau} below for 
details). Theorem~\ref{thm:error} is proved in 
Subsection~\ref{subsec:Proof_of_Thm1}.

Since $\E L_n=T2^{-n}$ and $L_n$ is independent of $Y$, 
the convergence of~\nameref{alg:SBA} is geometric 
(see also Section~\ref{sec:main}). 
Indeed, the error $(\Delta_n^\SB,\delta_n^\SB)$, satisfies the 
following weak limit. 

\begin{cor}
\label{cor:zoom}
If weak convergence $X_{t}/a\left(t\right)\overset{d}{\to}Z_{1}$ (as $t\searrow0$)
holds for some (necessarily) $\alpha$-stable process $Z$ and
a  function $a$, which is necessarily $1/\alpha$-regularly varying at zero, then
\begin{equation}
\left(\frac{Y_{L_n}}{a(L_n)},\frac{\Delta_{n}}{a(L_{n})},\frac{\Delta_n^\SB}{a(L_{n})},
	\frac{\delta_{n}}{L_{n}},\frac{\delta_{n}^\SB}{L_{n}}\right) 
\overset{d}{\to} \left(Z_1,\ov{Z}_{1},\ov{Z}_{1}-Z_{1}^+,
	\tau_1(Z),\tau_1(Z)-\1{Z_1>0}\right)
\enskip\text{as}\enskip n\to\infty.
\label{eq:zoom-in}
\end{equation}
\end{cor}

The assumption in Corollary~\ref{cor:zoom} essentially amounts to both tails of the 
L\'evy measure of $X$ being regularly varying at zero with index $-1/\alpha$
(see~\cite[Thm~2]{MR3784492}). This is a rather weak requirement, typically 
satisfied by L\'evy based models in applied probability, which allows an arbitrary 
modification of the  L\'evy measure away from zero 
(see discussion in~\cite[Sec.~4]{MR3784492}). Moreover, the index $\alpha$ 
is given by~\eqref{eq:alpha} and the function $a(t)$ 
is typically of the form $a(t)\sim C_0 t^{1/\alpha}$ for some constant $C_0>0$.
The scaling in the limit~\eqref{eq:zoom-in} is stochastic; however, 
since $\E L_{n}=T2^{-n}$, the rate of decay of the error is clearly geometric. 
Corollary~\ref{cor:zoom} is proved in Subsection~\ref{subsec:Proof_of_Thm1} 
by applying Theorem~\ref{thm:error} to the small-time weak limit of $X$. 

\subsection{Connections with existing literature}\label{subsec:Related_Lit}
In the present subsection we discuss briefly the literature on the 
approximations of $\chi$ and compare it with~\nameref{alg:SBA}.

The random walk approximation (RWA) (defined in~\eqref{eq:RWA} below) is based 
on $(X_{kT/n})_{k\in\{1,\ldots,n\}}$, the skeleton of the L\'evy process $X$. It 
is a widely used method for approximating $\chi$ with computational cost 
proportional to the discretisation parameter $n$. In the case of Brownian motion, 
the asymptotic law of the error was studied in~\cite{MR1384357}. The 
papers~\cite{MR1482707,MR1805321} (resp.~\cite{MR2867949,MR2851060})
identified the dominant error term of the RWA for barrier and lookback options 
under the exponential L\'evy models when $X$ is a Brownian motion with drift 
(resp. jump diffusion). Based on Spitzer's identity,~\cite{MR2996014} developed 
bounds on the decay of the error in $L^1$ for general L\'evy processes, extending the results of~\cite{MR2867949}. 
Ideas from~\cite{MR3784492} were employed in~\cite{ZoomIn} to obtain sharper 
bounds on the convergence of the error of the RWA in $L^p$ for general L\'evy 
processes and any $p>0$. Such results are useful for the analysis of MC and MLMC 
schemes based on the RWA, see~\cite{MR3723380} for  
the case of certain parametric L\'evy models. We will describe in more detail 
these contributions in Section~\ref{sec:main} as we contrast them with the 
analogous results for~\nameref{alg:SBA}.

Exploiting the the Wiener-Hopf factorisation,~\cite{MR2895413} 
introduced the Wiener-Hopf approximation (WHA) of $(X_T,\ov X_T)$. This 
approximation is given by $(X_{G_n},\ov X_{G_n})$, where $G_n$ is the sum of 
$n$ independent exponential random variables with mean $T/n$, so that $\E G_n=T$ 
with variance $T^2/n$. Implementing the WHA requires the ability to sample the 
supremum at an independent exponential time, which is only done approximately 
for a specific parametric class of L\'evy processes with exponential moments and 
arbitrary path variation~\cite{MR2895413}. The computational cost of the WHA is 
proportional to $n$. The decay  of the bias and the MLMC version of the WHA were 
later studied in~\cite{MR3138603}. As observed in~\cite[Sec.~1]{MR3723380}, the 
WHA currently cannot be directly applied to various parametric models used in 
practice which possess  increments that can be simulated exactly (e.g. the 
variance gamma process).

The jump-adapted Gaussian approximation (JAGA) was introduced 
in~\cite{MR2759203,MR2802466} to approximate Lipschitz 
functions in the supremum norm of L\'evy-driven stochastic 
differential equations with Lipschitz coefficients. 
The algorithm is based on an approximation of the skeleton $\{X_{t_k}\}_{k=1}^n$ 
where the time grid includes the times of the jumps of $X$ whose magnitude is 
larger than some cutoff level $\kappa$ and the small-jump component of $X$ 
is approximated by an additional Brownian motion. Typically, the cost and 
bias of the JAGA are proportional to $n+\kappa^{-\beta}$ and 
$(n^{-1/2}+n^{1/4}\kappa)\sqrt{\log n}$, respectively, where $\beta$ is the 
Blumenthal-Getoor index, see~\eqref{def:I0_beta}. 
The complexity of the MLMC 
version of the JAGA for Lipschitz functions of $(X_T,\ov X_T)$ 
is compared with that of~\nameref{alg:SBA} in 
Subsection~\ref{subsec:Complexity_SBA} below. 

In contrast with Theorem~\ref{thm:error} for the SBA, the laws of the errors 
of all the other algorithms discussed in the present subsection are intractable.
The error of the SBA $\chi_n$ in~\eqref{eq:SBA} decays geometrically in $L^p$ 
(see Theorem~\ref{thm:Lp} below) as opposed to the polynomial decay for the 
other algorithms (see Subsection~\ref{subsec:Lit_Lp} below). The error in $L^p$ 
of the SBA applied to locally Lipschitz and barrier-type functions arising in 
applications also decays geometrically (see Propositions~\ref{prop:LocLip} 
\&~\ref{prop:barrier} below). To the best of our knowledge, such errors have 
not been analysed for algorithms other than the RWA, which has polynomial 
decay (see Subsection~\ref{subsec:Lit_functionals} for details). The rate of 
the decay of the bias is directly linked to the computational complexity of MC 
and MLMC estimates. Indeed, if the mean squared error is to be at most 
$\epsilon>0$, the MC algorithm based on the SBA has (near optimal) complexity of 
order $\O(\epsilon^{-2}\log\epsilon)$ (see Appendix~\ref{sec:O_o} 
below for the definition of $\O$). The MLMC scheme based on~\nameref{alg:SBA} has (optimal) complexity of order $\O(\epsilon^{-2})$, 
which is in general neither the case for the RWA~\cite{MR3723380} nor the 
WHA~\cite{MR3138603} 
(see details in Subsection~\ref{subsec:Lit_complexity}). 

\vspace{-5mm}

\subsection{Organisation\label{subsec:organisation}}
The remainder of this paper is organised as follows. We develop the theory for 
the SBA as a Monte Carlo algorithm in Section~\ref{sec:main}. Each result is 
compared with its analogue (if it exists) for the algorithms discussed in
Subsection~\ref{subsec:Related_Lit} above. 
In Section~\ref{sec:Numerical-Examples} we provide numerical examples 
illustrating the performance of~\nameref{alg:SBA}. The proofs of the
results in Sections~\ref{sec:intro} and~\ref{sec:main} are presented in 
Section~\ref{sec:proofs}.

\section{SBA Monte Carlo: theory and applications\label{sec:main}}

The present section describes the geometric convergence of~\nameref{alg:SBA} and analyses the Monte Carlo estimation of the 
functions of interest in applied probability. 
In Subsection~\ref{subsec:Lp} we establish the geometric decay of the error in 
$L^p$. In Subsection~\ref{subsec:Functionals} we show that the error in $L^p$ (and hence the bias) of~\nameref{alg:SBA} applied to the aforementioned functions also decays 
geometrically. 
In Subsection~\ref{subsec:CLT}
we study the error of the MC estimator based on~\nameref{alg:SBA}
for  the expected value of those functions via a central limit theorem 
and provide the corresponding asymptotic and non-asymptotic 
confidence intervals. 
Subsection~\ref{subsec:Complexity_SBA} gives the computational complexity of 
the MC and MLMC estimators based on~\nameref{alg:SBA}. 
Subsection~\ref{subsec:UnbiasedEstimators} describes 
the unbiased estimator. 

\subsection{Geometric decay in $L^p$ of the error of the SBA}
\label{subsec:Lp}
In the present subsection we study the decay in $L^p$ of the error 
$(\Delta_n^\SB,\delta_n^\SB)$ of the SBA $\chi_n$ given in~\eqref{def:Delta}. 
Let $(\sigma^2,\nu,b)$ be the generating triplet of $X$ associated with the 
cutoff function $x\mapsto \1{|x|<1}$ (see~\cite[Ch.~2, Def.~8.2]{MR3185174}).
The existence of the moments of $X_T$ and $\ov X_T$, necessary for 
the following result, can be characterised~\cite[Thm~25.3]{MR3185174} in 
terms of the integrals
\begin{equation}\label{def:I_pm}
I_+^p=\int_{[1,\infty)}x^p\nu(dx),
\quad I_-^p=\int_{(-\infty,-1]}|x|^p\nu(dx),\quad p\geq 0.
\end{equation}
Throughout we use the standard $\O$ notation, see
Appendix~\ref{sec:O_o} below for definition.

\begin{thm}\label{thm:Lp} 
Under the assumptions of Theorem~\ref{thm:error}, the following holds for 
any $p\geq1$.
\item[{\normalfont(a)}] The inequality 
$\max\{\E\big[|\delta_n^\SB|^p\big],\E[\delta_n^p]\}\leq T^p(1+p)^{-n}$ 
holds for any $n\in\N$.
\item[{\normalfont(b)}] If $\min\{I_+^p,I_-^p\}<\infty$ (resp. $I_+^p<\infty$),
then $\E[(\Delta_n^\SB)^p]$ (resp. $\E[\Delta_n^p]$) is bounded above by 
$\O(\eta_p^{-n})$ as $n\to\infty$, where $\eta_p$ lies in the interval 
$[3/2,2]$ for any L\'evy process $X$. Both $\eta_p$, defined 
in~\eqref{eq:eta}, and the constants in $\O(\eta_p^{-n})$ are explicit in the 
characteristics $(\sigma^2,\nu,b)$ of 
$X$ (see~\eqref{eq:explicit_bounds}). 
\end{thm}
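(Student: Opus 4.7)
The plan is to leverage Theorem~\ref{thm:error} to rewrite both errors in terms of $L_n$ and the Lévy process $Y$, then exploit the independence of $L_n$ and $Y$. The starting point is the moment formula for the stick-breaking process $L_n = T\prod_{k=1}^n V_k$ with $V_k\sim U(0,1)$ iid:
\begin{equation*}
\E[L_n^q] = T^q\,\E[V_1^q]^n = \frac{T^q}{(1+q)^n},\qquad q>0.
\end{equation*}
Part (a) follows immediately from this formula (with $q=p$) and the a.s. inequalities $|\delta_n^\SB|\leq L_n$ and $0\leq\delta_n\leq L_n$ stated at the end of Theorem~\ref{thm:error}.

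For Part (b), Theorem~\ref{thm:error} yields $\Delta_n\overset{d}{=}\ov Y_{L_n}$ and $\Delta_n^\SB\overset{d}{=}\ov Y_{L_n}-Y_{L_n}^+$, with $L_n$ independent of $Y$. Using the elementary identity $\ov Y_t - Y_t^+ = \min\{\ov Y_t,\,\ov Y_t - Y_t\}$ together with the duality $\ov Y_t - Y_t \overset{d}{=} \sup_{s\leq t}(-Y_s)$, one may bound $(\Delta_n^\SB)^p$ by whichever of $\ov Y_t^{\,p}$ or $(\sup_{s\leq t}(-Y_s))^p$ has finite expectation, thereby covering both moment conditions $I_+^p<\infty$ and $I_-^p<\infty$. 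The key analytical ingredient is then a fixed-time moment bound of the form
\begin{equation*}
\E\big[\ov Y_t^{\,p}\big] \leq A_\sigma\, t^{p/2} + A_\nu\, t + A_b\, t^p,\qquad t\in(0,T],
\end{equation*}
with $A_\sigma,A_\nu,A_b$ explicit in $(\sigma^2,\nu,b)$. This is obtained from the Lévy--Itô decomposition: BDG handles the Brownian part (yielding $t^{p/2}$); Kunita's inequality applied separately to the compensated small-jump martingale and to the compound-Poisson large-jump part (with the latter controlled by $I_+^p$) yields the $t$ term; and the drift contributes $t^p$. Substituting $t=L_n$, taking expectation and using the moment formula above gives
\begin{equation*}
\E\big[(\Delta_n^\SB)^p\big] \leq A_\sigma T^{p/2}(1+p/2)^{-n} + A_\nu T\cdot 2^{-n} + A_b T^p(1+p)^{-n}.
\end{equation*}
Setting $\eta_p:=\min\{1+p/2,\,2\}$ — which lies in $[3/2,2]$ for $p\geq 1$ since $1+p\geq 2$ — the right-hand side is $\O(\eta_p^{-n})$. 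The identical argument, applied directly to $\ov Y_{L_n}$ without invoking duality, gives the bound for $\E[\Delta_n^p]$ under $I_+^p<\infty$.

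The main obstacle is establishing the fixed-time moment estimate with constants made \emph{explicit} in the characteristics $(\sigma^2,\nu,b)$ and with the sharp dependence on $t$. One must carefully handle the compensated small-jump martingale, avoid introducing a spurious $t^{p/2}$ contribution when $\sigma^2=0$ (otherwise $\eta_p$ would not reach $2$ in the pure-jump case), and track the universal constants in BDG and Kunita so that the resulting rate genuinely satisfies $\eta_p\in[3/2,2]$; beyond this, the passage from the fixed-time estimate to the geometric rate is a direct integration against the explicit density of $L_n$.
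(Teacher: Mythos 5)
Part (a) and your reduction from a fixed-time bound to the geometric rate are exactly the paper's argument: $|\delta_n^\SB|\leq L_n$, $0\leq\delta_n\leq L_n$, $\E[L_n^q]=T^q(1+q)^{-n}$, and then conditioning on $L_n$ via Theorem~\ref{thm:error} (the paper's Lemma~\ref{lem:small_t_to_Lp}); likewise the use of $\ov Y_t-Y_t^+=\min\{\ov Y_t,\ov Y_t-Y_t\}$ and duality to cover $\min\{I_+^p,I_-^p\}<\infty$ matches Corollary~\ref{cor:LevyMomBound}. The gap is in your key fixed-time estimate $\E[\ov Y_t^{\,p}]\leq A_\sigma t^{p/2}+A_\nu t+A_b t^p$. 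In the version you actually need — with the $t^{p/2}$ term tied to the Brownian part only, so that it disappears when $\sigma=0$ — the bound is false for pure-jump processes of infinite variation: for a symmetric $1.5$-stable process and $p=1$, scaling gives $\E[\ov Y_t]=ct^{2/3}$, which is not $\O(t)$ as $t\downarrow 0$; the correct small-time order is $t^{\min\{1,p/\beta_+\}}$ (resp.\ $t^{p/\beta_+}$), strictly between $t^{p/2}$ and $t$ when $p<\beta<2$. The reason your sketch misses this is that for $1\leq p<2$ the Kunita/BDG bound for the compensated small-jump martingale at the fixed cutoff $1$ produces the $t$ term only if $I_0^p=\int_{(-1,1)}|x|^p\nu(dx)<\infty$, i.e.\ essentially $p\geq\beta$; when $\beta>p$ the only available term is $(t\int_{(-1,1)}x^2\nu(dx))^{p/2}$, i.e.\ a $t^{p/2}$ term not coming from $\sigma$. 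If instead you keep that term, your estimate is true but yields only the rate $\min\{1+p/2,2\}$, which is strictly smaller than the theorem's $\eta_p$ from~\eqref{eq:eta} whenever $\alpha_+<2$: e.g.\ for a finite-variation pure-jump process with nonzero natural drift and $p=1$ the theorem asserts $\O(2^{-n})$, while your argument gives only $\O((3/2)^{-n})$. So as written you either rely on a false inequality or prove a strictly weaker statement; you flag this as "the main obstacle" but do not supply the device that resolves it.

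The missing idea (the content of the paper's Lemma~\ref{lem:LevyMomBound}) is to let the truncation level in the L\'evy--It\^o decomposition depend on $t$: choosing $\kappa=(t/T)^{1/\beta_+}$ makes the drift correction $b_\kappa$, the small-jump martingale and the intensity of the large-jump compound Poisson part all scale with $t$ so as to produce the powers $t^{p/\beta_+}$, $t^{p/2}$, $t^p$ and $t^{\min\{1,p/\beta_+\}}$ with explicit constants (for $p>2$ the small-jump part is handled by Doob's inequality plus an exponential-moment bound on the rescaled martingale, again exploiting the $t$-dependent $\kappa$); no fixed-cutoff Kunita estimate can produce the interpolating power $t^{p/\beta_+}$. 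Feeding the resulting bound $\E[\ov X_t^p]\leq C_p(X)t^{\eta_p-1}$ (and $\E[(\ov X_t-X_t^+)^p]\leq C_p^\ast(X)t^{\eta_p-1}$) of Corollary~\ref{cor:LevyMomBound} into your final conditioning step then gives exactly~\eqref{eq:explicit_bounds} with the $\eta_p$ of~\eqref{eq:eta}, which is what the theorem claims.
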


By Theorem~\ref{thm:error},
the error $\Delta_n^\SB$
is  bounded above by the supremum of the L\'evy process over the stochastic interval $[0,L_n]$
with average length equal to $\E L_n = T2^{-n}$.
The key step in the proof of Theorem~\ref{thm:Lp}, 
given in Lemma~\ref{lem:LevyMomBound} below, consists of controlling 
the expectation of the supremum of $X$ over short time intervals
(see Subsection~\ref{subsec:proofs-Lp} below for details).

Since $\eta_2=2$ (see definition in~\eqref{eq:eta}), an application 
of Theorem~\ref{thm:Lp}(b) for $p\in\{1,2\}$ yields $\E\Delta^\SB_n=
\O((3/2)^{-n})$ and $\E\big[\big(\Delta^\SB_n\big)^2\big]=\O(2^{-n})$. 
These two moments are used in the analysis of the MLMC estimator based on~\nameref{alg:SBA} (see Subsection~\ref{subsec:Complexity_SBA} below).
A further application of Theorem~\ref{thm:Lp} yields a geometric bound on the $L^p$-Wasserstein distance $\mathcal{W}_p(\mathcal{L}(\chi),
\mathcal{L}(\chi_n))$ between the laws $\mathcal{L}(\chi)$ and 
$\mathcal{L}(\chi_n)$ of the corresponding random vectors (see~\eqref{eq:Lp_Wd}  below for the definition of the Wasserstein distance 
and Subsection~\ref{subsec:proofs-Lp} for the proof of 
Corollary~\ref{cor:Wp}).

\begin{cor}\label{cor:Wp}
Assume $\min\{I_+^p,I_-^p\}<\infty$ for some $p\geq 1$. 
Under the assumptions of Theorem~\ref{thm:error} we have
$\mathcal{W}_p(\mathcal{L}(\chi),\mathcal{L}(\chi_n))=\O(\eta_p^{-n/p})$ as 
$n\to\infty$. As in Theorem~\ref{thm:Lp}(b) above, $\eta_p$ lies in the interval 
$[3/2,2]$ and the constant in $\O(\eta_p^{-n/p})$, given in 
Equation~\eqref{eq:Wp}, is explicit. 
\end{cor}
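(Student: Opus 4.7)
The plan is to exploit the fact that the SBA comes with an explicit coupling to $\chi$: by construction of $(\ell,Y)$ in~\eqref{eq:levy-minorant} and of $\chi_n$ in~\eqref{eq:SBA}, both random vectors are already defined on the same probability space, so the joint law $(\chi,\chi_n)$ is a valid coupling of $\mathcal{L}(\chi)$ and $\mathcal{L}(\chi_n)$. Applying the definition
\begin{equation*}
\mathcal{W}_p(\mathcal{L}(\chi),\mathcal{L}(\chi_n))^p
=\inf \E\|\chi'-\chi_n'\|^p,
\end{equation*}
where the infimum runs over couplings $(\chi',\chi_n')$ with the prescribed marginals, this coupling immediately gives the upper bound $\mathcal{W}_p(\mathcal{L}(\chi),\mathcal{L}(\chi_n))^p\leq\E\|\chi-\chi_n\|^p$.

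By Theorem~\ref{thm:error} the difference has the explicit form $\chi-\chi_n=(0,\Delta_n^\SB,\delta_n^\SB)$, so one only needs to control the two errors whose moments are handled in Theorem~\ref{thm:Lp}. Using any fixed norm on $\R^3$ together with the elementary inequality $(a+b)^p\leq 2^{p-1}(a^p+b^p)$ for $a,b\geq 0$, I would reduce the problem to the pointwise bound
\begin{equation*}
\|\chi-\chi_n\|^p\leq C_p\bigl((\Delta_n^\SB)^p+|\delta_n^\SB|^p\bigr),
\end{equation*}
with $C_p$ an explicit constant depending only on $p$ and the choice of norm. Taking expectations and invoking Theorem~\ref{thm:Lp}(a) and~(b) yields
\begin{equation*}
\E\|\chi-\chi_n\|^p\leq C_p\bigl(\O(\eta_p^{-n})+T^p(1+p)^{-n}\bigr).
\end{equation*}

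To conclude, I would observe that for $p\geq 1$ one has $1+p\geq 2\geq\eta_p$, so $(1+p)^{-n}\leq\eta_p^{-n}$ and the $\Delta_n^\SB$-term dominates; hence $\E\|\chi-\chi_n\|^p=\O(\eta_p^{-n})$. Taking $p$-th roots gives $\mathcal{W}_p(\mathcal{L}(\chi),\mathcal{L}(\chi_n))=\O(\eta_p^{-n/p})$, and the explicit constant in~\eqref{eq:Wp} is obtained by tracking the constants from~\eqref{eq:explicit_bounds} in Theorem~\ref{thm:Lp}(b) together with the norm inequality above. There is no real obstacle: the corollary is essentially a packaging of Theorem~\ref{thm:Lp} once one notices that the SBA, unlike the RWA or WHA, provides an explicit almost sure coupling to $\chi$, which can be plugged directly into the Wasserstein variational formula.
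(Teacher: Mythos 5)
Your argument is correct and is essentially the paper's proof: plug the explicit a.s.\ coupling $\chi-\chi_n=(0,\Delta_n^\SB,\delta_n^\SB)$ into the variational definition of $\mathcal{W}_p$, bound the two error terms by Theorem~\ref{thm:Lp}(a)--(b) via~\eqref{eq:explicit_bounds}, and absorb $(1+p)^{-n}$ into $\eta_p^{-n}$ using $1+p\geq2\geq\eta_p$. The only (cosmetic) difference is that the paper works directly with the $p$-norm appearing in~\eqref{eq:Lp_Wd}, for which $\|\chi-\chi_n\|_p^p=|\Delta_n^\SB|^p+|\delta_n^\SB|^p$ exactly, so your $2^{p-1}$-type constant is not needed and the constant in~\eqref{eq:Wp} comes out slightly cleaner.
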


\subsubsection{Comparison}\label{subsec:Lit_Lp}
The algorithm based on the RWA with time-step $T/n$
outputs 
\begin{equation}\label{eq:RWA}
\left(X_T,\max_{k\in\{1,\ldots,n\}}X_{kT/n},
\frac{T}{n}\arg\max_{k\in\{1,\ldots,n\}}X_{kT/n}\right).
\end{equation}
The $L^1$ bounds on the error $\Delta_n^\RW = 
\ov X_T - \max_{k\in\{1,\ldots,n\}}X_{kT/n}$ 
have a long history. Using the weak limit of the error of the RWA, 
the $L^1$ bound $\E\Delta^\RW_n=\O(n^{-1/2})$ is established for the 
Brownian motion with drift in~\cite{MR1384357,MR1805321}. 
The same bound holds when the jumps of $X$ have finite activity (i.e. 
$\nu(\R)<\infty$ and $\sigma\neq0$)~\cite{MR2867949}. 
The approach of~\cite{MR2867949}, based on Spitzer's identity, was extended 
in~\cite[Thm~5.2.1]{MR2996014} to the case without a Brownian component. If $X$ 
has paths of finite variation, these bounds were further improved via a different 
methodology in~\cite{ZoomIn}. In particular, by~\cite[Thm~4.1]{ZoomIn}, we have: 
$\E\Delta^\RW_n=\O(n^{-1/2})$ if $X$ has a Brownian component (i.e.
$\sigma\neq0$), $\E\Delta^\RW_n=\O(n^{-1})$ if $X$ has paths of finite 
variation (i.e. $\int_{(-1,1)}|x|\nu(dx)<\infty$ and $\sigma=0$) and 
$\E\Delta^\RW_n=\O(n^{\delta-1/\beta})$ otherwise, 
for any small $\delta>0$ and $\beta\in[1,2]$ defined 
in~\eqref{def:I0_beta} below. 

Bounds for $\E\big[\big(\Delta_n^\RW\big)^p\big]$, $p>0$, analysed 
in~\cite{MR2867949,ZoomIn}, are as follows. By~\cite[Thm~4.1]{ZoomIn}, for 
$\alpha\in[0,2]$ given in~\eqref{eq:alpha} below, the decay is $\O(n^{-1})$ for 
$p>\alpha$ and $\O(n^{\delta-p/\alpha})$ for $0<p\leq\alpha$ and any small 
$\delta>0$ (we may take $\delta=0$ if either $\alpha=1$ and $X$ is of finite 
variation or $\alpha=2$). If $X$ is spectrally negative (i.e. $\nu((0,\infty))=0$) and 
has jumps of finite variation (i.e. $\int_{(-1,0)}|x|\nu(dx)<\infty$), then for 
$p>1$ the decay is of order $\O(n^{-p})$ (resp. $\O(n^{-p/2}\log(n)^p)$) if 
$\sigma=0$ (resp. $\sigma\neq0$)~\cite[Lem.~6.5]{MR2867949}. Interestingly, as 
noted in~\cite[Rem.~4.4]{ZoomIn}, if $X$ has jumps of both signs, then for any 
$p>0$, the error of the RWA satisfies 
$\liminf_{n\to\infty}n\E\big[\big(\Delta_n^\RW\big)^p\big]>0$. Put differently, 
the error cannot be of order $o(n^{-1})$ (see Appendix~\ref{sec:O_o} 
below for the definition of $o$). 

Intuitively, the error committed by the RWA is due to the skeleton missing the 
fluctuations of the process over the interval of length $1/n$ where the process 
attained its supremum. Since these fluctuations can be substantial in the presence 
of high jump activity and heavy tails, the decay of the resulting error is polynomial 
in $n$. In contrast, the error of the SBA is by 
Theorem~\ref{thm:Lp}(b) bounded by  $\O(\eta_p^{-n})$ with $\eta_p\in[3/2,2]$, as 
it commits the same error as the RWA but over the interval $[0,L_n]$ with 
average length of $T/2^n$. Numerical results show that the biases of the RWA and 
the SBA over $2^n$ and $n$ steps, respectively, are comparable 
(Figure~\ref{fig:lookback_TSBM} below).

Recall that the WHA, applicable to a specific parametric class of L\'evy 
processes~\cite{MR2895413}, is given by $(X_{G_n},\ov X_{G_n})$, where $G_n$ is 
an independent gamma random variable with mean $\E G_n=T$ and variance $T^2/n$. 
Since $\ov X_{s+t}-\ov X_s$ is stochastically dominated by $\ov X_t$ and 
$X_{t+s}-X_s\overset{d}{=}X_t$, the $L^p$ norm of the error is linked to both, 
the small time behaviour of $t\mapsto(X_t,\ov X_t)$ and the deviations of $G_n$ 
from $T$. Therefore, the moments of the errors depend on those of $|G_n-T|$ and 
satisfy $\E[|X_T-X_{G_n}|^p]=\O(n^{-1/q})$ and $\E[|\ov X_T-\ov X_{G_n}|^p]=
\O(n^{-1/q})$ for $p\in\{1,2\}$, where $q=4$ if $p=1$ and $X$ is of infinite 
variation and $q=2$ otherwise~\cite[Prop.~4.5]{MR3138603}. 
These bounds are based on a martingale decomposition of 
the L\'evy process $X$ (see~\cite[Lem.~4.4]{MR3138603}), 
while the analogous results in our paper use the L\'evy-It\^o 
decomposition, see Lemma~\ref{lem:LevyMomBound} below.

Intuitively, the error in the WHA is due to the censored fluctuations of $X$
over a stochastic interval of length $|G_n-T|$. This is analogous to the error 
of the SBA over a stochastic interval of length $L_n$. However, since
$\E[|G_n-T|]$ is asymptotically equal to $T\sqrt{2/(n\pi)}$ (by the central 
limit theorem and~\cite[Thm~5.4]{MR1700749}) and $\E[L_n]=T2^{-n}$, the 
speed of convergence is polynomial in the WHA and geometric in the SBA.

The first two moments of the error of the JAGA with cost $n$ were 
analysed in~\cite{MR2802466,MR2759203}, resulting in the bound 
$\O(n^{-\min\{1,1/\beta_+\}}+n^{1/4-1/\beta_+}\sqrt{\log n})$ if $X$ 
has no Brownian component (i.e. $\sigma=0$) and 
$\O(n^{1/4-\min\{3/4, 1/\beta_+\}}\sqrt{\log n})$ otherwise, 
where $\beta_+$, given in~\eqref{eq:BG+}, is \emph{slightly} larger 
than the Blumenthal-Getoor index $\beta\in[0,2]$ 
in~\eqref{def:I0_beta}. 
Intuitively, this error is the result of missing the fluctuations of $X$ between 
consecutive points on the random grid and the error incurred from approximating 
the small-jump component with an additional Brownian motion. 

\subsection{SBA for certain functions of $\chi$: geometric decay of the strong error\label{subsec:Functionals}}
Throughout the paper we consider a measurable function 
$g:\R\times\R_+\times[0,T]\to\R$ satisfying $\E|g(\chi)|<\infty$, 
where $\R_+=[0,\infty)$. We focus our 
attention on the classes of functions that arise in application areas such as 
financial mathematics~\cite{schoutens2003levy,MR2042661}, risk 
theory~\cite{schoutens2010levy,MR2766220} and 
insurance~\cite{KyprianouCoCos}. More specifically, we study the following 
three classes of functions: 
(I) Lipschitz in Proposition~\ref{prop:Lipschitz}, 
(II) locally Lipschitz in Proposition~\ref{prop:LocLip} and 
(III) barrier-type in Proposition~\ref{prop:barrier}.
These results are a consequence of the representation of the law of the error in 
Theorem~\ref{thm:error}, bounds from Theorem~\ref{thm:Lp} and a tail estimate 
(without integrability assumptions) for the error $\Delta_n$ 
in Lemma~\ref{lem:bound}.


Lipschitz functions arise in applications, for example, in the pricing of 
hindsight~\cite{MR1805321,MR2023872,MR2867949,MR3723380} and perpetual 
American~\cite{MR1932381} puts under exponential L\'evy models. Indeed, for 
fixed $S_0,K_0>0$, these two examples require computing the expectations of 
$(K_0-S_0e^{X_T-\ov{X}_T})^+$ and $e^{X_T-\ov{X}_T}$, both of which are bounded 
and Lipschitz in $(X_T,\ov{X}_T)$ since $\ov{X}_T\geq X_T$.  
The next result, proved in Subsection~\ref{proof:Lip} below, shows that the 
convergence of~\nameref{alg:SBA} is also geometric for these functions.

\begin{prop}\label{prop:Lipschitz}
Assume $|g(x,y,t)-g(x,y',t')|\leq K(|y-y'|+|t-t'|)$ 
for some $K>0$ and all $x\in\R$, $y,y'\in\R_+$, $t,t'\in[0,T]$. 
Suppose $p\geq 1$ satisfies $\min\{\|g\|_\infty,I_+^p,I_-^p\}<\infty$, where 
$\|g\|_\infty=\sup\{|g(x,y,t)|:(x,y,t)\in\R\times\R_+\times[0,T]\}$, and 
let $\eta_p\in[3/2,2]$ be as in~\eqref{eq:eta}. Then, under the assumptions 
of Theorem~\ref{thm:error}, we have \[\E[|g(\chi)-g(\chi_n)|^p]=\O(\eta_p^{-n})
\quad\text{as }n\to\infty.\] Moreover, the constant in $\O(\eta_p^{-n})$, given in 
Equation~\eqref{eq:LipBound} below, is explicit in $K$, $\|g\|_\infty$ and the 
characteristics $(\sigma^2,\nu,b)$ of the L\'evy process $X$.
\end{prop}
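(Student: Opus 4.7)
The starting point is to reduce the $p$-th moment of the functional error to $p$-th moments of the two error components $\Delta_n^{\SB}$ and $\delta_n^{\SB}$ already controlled by previous results. By~\eqref{eq:SBA}, the first coordinate of $\chi_n$ coincides with $X_T$, so $\chi - \chi_n = (0, \Delta_n^{\SB}, \delta_n^{\SB})$. The Lipschitz hypothesis on $g$ involves only the last two coordinates, and $\Delta_n^{\SB} \geq 0$ by Theorem~\ref{thm:error}, hence
\[
|g(\chi) - g(\chi_n)| \leq K\bigl(\Delta_n^{\SB} + |\delta_n^{\SB}|\bigr).
\]
Applying $(a+b)^p \leq 2^{p-1}(a^p + b^p)$ reduces the problem to bounding $\E[(\Delta_n^{\SB})^p]$ and $\E[|\delta_n^{\SB}|^p]$ geometrically.

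The time component is immediate from Theorem~\ref{thm:Lp}(a): $\E[|\delta_n^{\SB}|^p] \leq T^p(1+p)^{-n}$. Since $p \geq 1$, we have $1+p \geq 2 \geq \eta_p$, so this contributes $\O(\eta_p^{-n})$. The supremum component splits into two cases according to which term in $\min\{\|g\|_\infty, I_+^p, I_-^p\}$ is finite. If $\min\{I_+^p, I_-^p\} < \infty$, then Theorem~\ref{thm:Lp}(b) directly yields $\E[(\Delta_n^{\SB})^p] = \O(\eta_p^{-n})$ and the proof is complete.

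The remaining case, $\|g\|_\infty < \infty$, is the main obstacle, because the integrability conditions of Theorem~\ref{thm:Lp}(b) may fail. My plan is to trade one power of the error for boundedness and reduce to an $L^1$-type bound: since $|g(\chi) - g(\chi_n)| \leq 2\|g\|_\infty$, one has
\[
|g(\chi) - g(\chi_n)|^p \leq (2\|g\|_\infty)^{p-1}\,\min\bigl\{2\|g\|_\infty,\; K(\Delta_n^{\SB} + |\delta_n^{\SB}|)\bigr\}.
\]
Setting $c := 2\|g\|_\infty/K$ and using $\Delta_n^{\SB} \leq \Delta_n$ (Theorem~\ref{thm:error}), it suffices to bound $\E[\min\{c, \Delta_n\}]$ geometrically. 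By the conditional identity in Theorem~\ref{thm:error}, $\E[\min\{c, \Delta_n\} \mid L_n] = \E[\min\{c, \ov Y_{L_n}\} \mid L_n]$, and one may now decouple the tail of $\ov Y_t$ (controlled by the tail estimate of Lemma~\ref{lem:bound}, which requires no integrability) from the geometric smallness of $L_n$ (whose $q$-th moment is $T^q(1+q)^{-n}$). Splitting the expectation on an event $\{L_n \leq \delta\}$ with $\delta$ chosen to depend on $n$ so as to balance the two contributions yields the geometric rate $\O(\eta_p^{-n})$.

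Finally, I would track the constants produced through the reductions above, expressing them in terms of $K$, $\|g\|_\infty$ and the constants from Theorem~\ref{thm:Lp} (which are themselves explicit in $(\sigma^2, \nu, b)$), to obtain the explicit prefactor appearing in~\eqref{eq:LipBound}. The delicate step is the bounded case, where the blend of the truncation argument, Theorem~\ref{thm:error}, and Lemma~\ref{lem:bound} replaces the otherwise missing moment hypothesis on the L\'evy process.
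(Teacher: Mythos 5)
Your reduction $|g(\chi)-g(\chi_n)|\leq K(\Delta_n^\SB+|\delta_n^\SB|)$, the treatment of the time component via Theorem~\ref{thm:Lp}(a), and the case $\min\{I_+^p,I_-^p\}<\infty$ via Theorem~\ref{thm:Lp}(b) are all correct and coincide with the paper's argument. The gap is in the bounded case. By writing $|g(\chi)-g(\chi_n)|^p\leq(2\|g\|_\infty)^{p-1}\min\{2\|g\|_\infty,K(\Delta_n^\SB+|\delta_n^\SB|)\}$ you trade away \emph{all} $p-1$ powers of the error before truncating, so the quantity you are left to control is the first moment $\E[\min\{c,\Delta_n\}]$. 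That quantity genuinely decays like $\eta_1^{-n}$, not $\eta_p^{-n}$: conditionally on $L_n=t$ one has $\E[\min\{c,\ov Y_t\}]\asymp t^{\eta_1-1}$ (e.g.\ for Brownian motion $\E[\min\{c,\ov Y_t\}]\asymp\sqrt t$), so $\E[\min\{c,\Delta_n\}]\asymp\E[L_n^{\eta_1-1}]=T^{\eta_1-1}\eta_1^{-n}$, and no choice of the splitting threshold $\delta=\delta(n)$ on $\{L_n\leq\delta\}$ can improve the size of a fixed expectation. Since $\eta_p$ can be strictly larger than $\eta_1$ for $p>1$ (if $\sigma\neq0$ and $p=2$ one has $\eta_1=3/2<2=\eta_2$, precisely the case needed for the MLMC variance bounds), your route proves only $\O(\eta_1^{-n})$, which is weaker than the claimed $\O(\eta_p^{-n})$.

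The fix is to truncate \emph{inside} the $p$-th power rather than before it: using $\min\{a+b,c\}\leq\min\{a,c\}+b$ one gets $|g(\chi)-g(\chi_n)|^p\leq 2^{(p-1)^+}\big(\min\{K\Delta_n,2\|g\|_\infty\}^p+K^pL_n^p\big)$, and the truncated $p$-th moment is exactly what~\eqref{eq:ErrorTruncE} of Lemma~\ref{lem:bound} controls: $\E[\min\{\Delta_n,r\}^p]\leq r^p\ov\nu(1)T2^{-n}+C_p(Z)T^{\eta_p-1}\eta_p^{-n}$ with $r=2\|g\|_\infty/K$. The point of that lemma (which is the paper's route) is that on the event of no jump of $J^{2,1}$ on $[0,L_n]$ the supremum agrees with that of $Z=X-J^{2,1}$, whose L\'evy measure has no mass outside $(-1,1)$, so the full $p$-th moment $\E[\ov Z_t^p]\leq C_p(Z)t^{\eta_p-1}$ is available without any integrability hypothesis on $\nu$, while the complementary event costs only $r^p\ov\nu(1)t$; the $p$-dependence of the rate then survives and yields $\O(\eta_p^{-n})$ with the constant of~\eqref{eq:LipBound}. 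Your conditioning on $L_n$ and appeal to the no-integrability estimate are the right ingredients, but they must be applied to $\min\{\Delta_n,r\}^p$, not to $\min\{\Delta_n,r\}$.
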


The pricing of lookback puts, hindsight calls~\cite{MR1805321,MR2867949,
MR3723380} and perpetual American calls~\cite{MR1932381} involve expectations 
of continuous functions of $\chi$, such as $(S_0e^{\ov X_T}-K_0)^+$ and 
$e^{\ov X_T}$, which are only locally Lipschitz. By Proposition~\ref{prop:LocLip}, 
under appropriate assumptions on large positive jumps, the error of~\nameref{alg:SBA} decays geometrically for such functions. 

\begin{prop}\label{prop:LocLip} 
Assume that $|g(x,y,t)-g(x,y',t')|\leq K
(|y-y'|+|t-t'|)e^{\lambda\max\{y,y'\}}$ for some $K,\lambda>0$ 
and all $(x,y,y^{\prime},t,t^{\prime})\in\R\times\R_+^2\times[0,T]^2$. 
Let $p\geq1$ and $q>1$ satisfy $\int_{[1,\infty)}e^{\lambda pqx}\nu(dx)<\infty$ 
and let $\eta_{pq'}\in[3/2,2]$ be as in~\eqref{eq:eta}, where $q'=(1-1/q)^{-1}$. 
Then, under the assumptions of Theorem~\ref{thm:error}, 
\[\E[|g(\chi)-g(\chi_n)|^p] = \O\big(\eta_{pq'}^{-n/q'}\big)\quad\text{as }n\to\infty.\]
Moreover, the constant in $\O\big(\eta_{pq'}^{-n/q'}\big)$, given in 
Equation~\eqref{eq:LocLipBound} below, is explicit in $p,q,K,\lambda$ and 
the characteristics $(\sigma^2,\nu,b)$ of the L\'evy process $X$.
\end{prop}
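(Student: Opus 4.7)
The plan is to combine the local Lipschitz hypothesis on $g$ with H\"older's inequality to decouple the geometrically small error $(\Delta^\SB_n,|\delta^\SB_n|)$ from the exponential weight $e^{\lambda\ov X_T}$, then apply Theorem~\ref{thm:Lp} to the former and a standard exponential moment estimate for the supremum of a L\'evy process to the latter.

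First I would observe that the supremum coordinate of $\chi_n$ equals $\ov X_T-\Delta^\SB_n$ and, by Theorem~\ref{thm:error}, lies in $[0,\ov X_T]$ since $\Delta^\SB_n\geq 0$. Thus $\max\{y,y'\}$ in the local Lipschitz bound collapses to $\ov X_T$, yielding the pointwise inequality
\[
|g(\chi)-g(\chi_n)|\leq K(\Delta^\SB_n+|\delta^\SB_n|)e^{\lambda\ov X_T}.
\]
Raising to the $p$-th power and applying H\"older's inequality with the conjugate pair $(q',q)$ produces
\[
\E\bigl[|g(\chi)-g(\chi_n)|^p\bigr]\leq K^p\,\E\bigl[(\Delta^\SB_n+|\delta^\SB_n|)^{pq'}\bigr]^{1/q'}\E\bigl[e^{\lambda pq\ov X_T}\bigr]^{1/q}.
\]
The first factor decays at the desired rate $\O(\eta_{pq'}^{-n/q'})$: combining $(a+b)^{pq'}\leq 2^{pq'-1}(a^{pq'}+b^{pq'})$ with Theorem~\ref{thm:Lp}(b) gives $\E[(\Delta^\SB_n)^{pq'}]=\O(\eta_{pq'}^{-n})$, since the hypothesis implies $I_+^{pq'}<\infty$, while Theorem~\ref{thm:Lp}(a) gives $\E[|\delta^\SB_n|^{pq'}]\leq T^{pq'}(1+pq')^{-n}$, which is of the same order because $1+pq'\geq 2\geq\eta_{pq'}$.

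The main obstacle will be verifying that the second factor $\E[e^{\lambda pq\ov X_T}]$ is finite, since Theorem~\ref{thm:Lp} controls only the SBA error and not $\ov X_T$ itself. I would fix any $\theta'\in(0,\lambda pq)$; the L\'evy--Khintchine formula together with the hypothesis shows that the Laplace exponent $\psi(\theta')=\log\E[e^{\theta' X_1}]$ is finite, so $M_t=e^{\theta' X_t-t\psi(\theta')}$ is a non-negative martingale. Applying Doob's $L^r$-maximal inequality with $r:=\lambda pq/\theta'>1$ gives
\[
\E\bigl[\bigl(\sup_{t\leq T}M_t\bigr)^r\bigr]\leq\bigl(\tfrac{r}{r-1}\bigr)^r\E[M_T^r]=\bigl(\tfrac{r}{r-1}\bigr)^r e^{T\psi(\lambda pq)-rT\psi(\theta')}<\infty,
\]
and the pointwise bound $e^{\lambda pq\ov X_T}\leq e^{rT(\psi(\theta'))_+}(\sup_{t\leq T}M_t)^r$ then produces an explicit finite estimate for $\E[e^{\lambda pq\ov X_T}]$ in terms of the characteristics $(\sigma^2,\nu,b)$. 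Tracking the constants through H\"older's inequality and Theorem~\ref{thm:Lp} yields the explicit constant in Equation~\eqref{eq:LocLipBound}.
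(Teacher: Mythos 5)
Your argument is correct, and its skeleton coincides with the paper's: the same pointwise bound $|g(\chi)-g(\chi_n)|\leq K(\Delta_n^\SB+|\delta_n^\SB|)e^{\lambda\ov X_T}$ (the paper further bounds this by $K(\Delta_n+L_n)e^{\lambda\ov X_T}$), the same H\"older split with exponents $(q',q)$, and the same use of Theorem~\ref{thm:Lp} (noting, as you do, that $E_+^{\lambda pq}<\infty$ forces $I_+^{pq'}<\infty$ and that $1+pq'\geq 2\geq\eta_{pq'}$) to get the factor $\O(\eta_{pq'}^{-n/q'})$. The one place where you genuinely diverge is the explicit control of $\E[e^{\lambda pq\ov X_T}]$. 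The paper removes the jumps in $(-\infty,-1)$ to obtain a dominating process $Z$, and then bounds $\E[e^{\lambda pq Z_T^\ast}]$ through the tail comparison $\P(Z_T^\ast>z+c)\leq\P(|Z_T|>z)/\P(Z_T^\ast\leq c/2)$ from~\citep[Eq.~(25.15)]{MR3185174}, together with Lemma~\ref{lem:LevyMomBound} and Markov's inequality to lower-bound $\P(Z_T^\ast\leq c/2)$; this is why the paper needs $Z$ at all (to make $\E[e^{-\lambda pq Z_T}]$ finite). You instead apply Doob's $L^r$-maximal inequality to the exponential martingale $M_t=e^{\theta'X_t-t\psi(\theta')}$ with $r=\lambda pq/\theta'>1$, which is correct (finiteness of $\psi(\theta')$ and $\psi(\lambda pq)$ follows from $E_+^{\lambda pq}<\infty$ and $\ov\nu(1)<\infty$, the pointwise bound $e^{\lambda pq\ov X_T}\leq e^{rT(\psi(\theta'))^+}\big(\sup_{t\leq T}M_t\big)^r$ holds, and $\E[M_T^r]=e^{T\psi(\lambda pq)-rT\psi(\theta')}$), arguably cleaner, and avoids the auxiliary process $Z$ and the tail-comparison lemma; its price is an extra free parameter $\theta'$ (the constant degenerates as $\theta'\uparrow\lambda pq$, but any fixed choice such as $\theta'=\lambda pq/2$ works). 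One caveat: the explicit constant your route produces is not the one displayed in~\eqref{eq:LocLipBound}, though it is equally explicit in $p,q,K,\lambda$ and $(\sigma^2,\nu,b)$, so the proposition as stated (with its specific constant) is obtained only up to replacing that formula by yours.
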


In order to obtain the smallest value $\eta_{pq'}^{-1/q'}$ in Proposition~\ref{prop:LocLip}, 
one needs to take the largest possible $q$ allowed by the assumptions (see Remark~\ref{rem:LocLip} below for 
details). Hence, the rate of decay is determined by the exponential moments of the 
L\'evy measure $\nu|_{[1,\infty)}$. In the context of financial mathematics, it is 
natural to assume that the returns in the exponential L\'evy model have finite 
variance, i.e. $\E e^{2X_t}<\infty$. This is equivalent to 
$\int_{[1,\infty)}e^{2x}\nu(dx)<\infty$~\cite[Thm~25.3]{MR3185174}, implying 
for example $q=2$ (for $\lambda=1$ and $p=1$) with the bound $\O(2^{-n/2})$.
The proof of Proposition~\ref{prop:LocLip} is in Subsection~\ref{proof:LocLip}. 
A numerical example is in Subsection~\ref{subsec:ComparisonSBA_RWA}.

Barrier-type functions of $\chi$, which are discontinuous in the trajectory of the 
L\'evy process, arise in the pricing of contingent convertibles~\cite{KyprianouCoCos}, 
the evaluation of ruin probabilities~\cite{MR2099651} and as payoffs of barrier 
options~\cite{MR1482707,MR1805321,MR2023872}. 
By Theorem~\ref{thm:error}, 
the error $\Delta_n^\SB$ in~\eqref{def:Delta} of the second 
coordinate $\ov X_T-\Delta_n^\SB$ of the SBA $\chi_n$ satisfies 
$0\leq\Delta_n^\SB\searrow 0$ a.s. as $n\to\infty$. Hence, the limit 
$\P(\ov X_T-\Delta_n^\SB\leq x)\searrow\P(\ov{X}_T\leq x)$ as $n\to\infty$ holds for 
any fixed $x>0$. The rate of convergence in this limit is both crucial for the control 
of the bias of barrier-type functions and intimately linked to the quality of the 
right-continuity of  the distribution function $x\mapsto\P(\ov{X}_T\leq x)$ of 
$\ov X_T$. We will thus need the following assumption.

\begin{asm}\label{asm:H} 
Given $M,K,\gamma>0$, the inequality $\P(\ov X_T\leq M+x)-\P(\ov X_T\leq M)
\leq Kx^\gamma$ holds for all $x\geq0$. 
\end{asm}

\begin{prop}\label{prop:barrier} 
Define $g(\chi)=h(X_T)\1{\ov{X}_T\leq M}$, where $h:\R\to\R$ is bounded and 
measurable and $M>0$. Let Assumption~\ref{asm:H} hold for $M$ and 
some $K,\gamma>0$. Fix any $p,q\geq1$ and let $\eta_q\in[3/2,2]$ be as 
in~\eqref{eq:eta}. Then, under the assumptions of Theorem~\ref{thm:error}, we have
\[\E[|g(\chi)-g(\chi_n)|^p] =\O\big(\eta_q^{-n\gamma/(\gamma+q)}\big),
\quad\text{as }n\to\infty.
\]
Moreover, the constant in $\O\big(\eta_q^{-n\gamma/(\gamma+q)}\big)$, given in 
Equation~\eqref{eq:barrierBound} below, is explicit in $K$, $\gamma$, $p$, $q$, 
$\|h\|_\infty$ and the characteristics $(\sigma^2,\nu,b)$ of the L\'evy process $X$.
\end{prop}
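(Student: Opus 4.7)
The strategy is to exploit the fact that $g(\chi)$ and $g(\chi_n)$ differ only through whether the barrier $M$ is crossed by $\ov X_T$ or by $\ov X_T-\Delta_n^\SB$. Under the coupling from~\eqref{eq:levy-minorant}, Theorem~\ref{thm:error} yields $X_T=Y_T$ a.s.\ and $\chi_n=(X_T,\ov X_T-\Delta_n^\SB,\tau_T-\delta_n^\SB)$ with $\Delta_n^\SB\geq 0$. Since $g$ does not depend on its third argument,
\[
|g(\chi)-g(\chi_n)|=|h(X_T)|\,\bigl|1_{\ov X_T\leq M}-1_{\ov X_T\leq M+\Delta_n^\SB}\bigr|\leq \|h\|_\infty\, 1_{\{M<\ov X_T\leq M+\Delta_n^\SB\}},
\]
so that
\[
\E[|g(\chi)-g(\chi_n)|^p]\leq \|h\|_\infty^p\,\P\bigl(M<\ov X_T\leq M+\Delta_n^\SB\bigr).
\]

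Next, for an auxiliary parameter $\epsilon>0$ (to be optimised), split the event according to the size of $\Delta_n^\SB$:
\[
\P\bigl(M<\ov X_T\leq M+\Delta_n^\SB\bigr)\leq \P\bigl(M<\ov X_T\leq M+\epsilon\bigr)+\P(\Delta_n^\SB>\epsilon).
\]
The first term is at most $K\epsilon^\gamma$ by Assumption~(\nameref{asm:(H)}). For the second, the monotonicity $\Delta_n^\SB\leq\Delta_n$ from Theorem~\ref{thm:error} combined with Markov's inequality and the $L^q$ bound of Theorem~\ref{thm:Lp}(b) (or, when moments of order $q$ are lacking, the moment-free tail estimate of Lemma~\ref{lem:bound} applied to $\Delta_n$) gives $\P(\Delta_n^\SB>\epsilon)\leq C_q\epsilon^{-q}\eta_q^{-n}$ for an explicit constant $C_q$ depending only on $q$ and $(\sigma^2,\nu,b)$.

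Optimising via $\epsilon=\eta_q^{-n/(\gamma+q)}$ balances the two contributions at the common order $\eta_q^{-n\gamma/(\gamma+q)}$, producing the claimed geometric rate, and propagating constants through this optimisation yields the explicit prefactor claimed in~\eqref{eq:barrierBound}. The main obstacle is securing a geometrically-decaying tail estimate for $\Delta_n^\SB$ that holds for \emph{every} L\'evy process, even when $X$ has no $q$-th moment; this is precisely the role of the moment-free Lemma~\ref{lem:bound}, which ensures the statement (and the explicit dependence on the characteristics) remains meaningful without extra integrability hypotheses on $\nu$. Once that tail bound is in hand, the remainder of the argument is a routine epsilon-balancing.
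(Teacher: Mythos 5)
Your proposal is correct and follows essentially the same route as the paper: bound the difference by $\|h\|_\infty$ times the indicator of the near-barrier event, split according to whether $\Delta_n$ exceeds $\epsilon_n=\eta_q^{-n/(\gamma+q)}$, control the two pieces via Assumption~(\nameref{asm:(H)}) and the moment-free tail bound of Lemma~\ref{lem:bound}, and balance. The only cosmetic imprecision is writing the tail estimate as $C_q\epsilon^{-q}\eta_q^{-n}$, whereas Lemma~\ref{lem:bound} also produces the term $\ov{\nu}(1)T2^{-n}$; since $2^{-n}\leq\eta_q^{-n}$ and $\epsilon_n\leq1$, this term is absorbed into the same order and into the explicit constant of~\eqref{eq:barrierBound}, exactly as in the paper.
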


The proof of Proposition~\ref{prop:barrier} is in Subsection~\ref{proof:barrier} 
below. Minimising $\eta_q^{-\gamma/(\gamma+q)}$ as a function of $q$ is not 
trivial (see Remark~\ref{rem:optimal_q} below for the optimal choice of $q$). 
In the special case when 
$\gamma=1$ (i.e. the distribution function of $\ov X_T$ is Lipschitz from the 
right at $M$) we have: (a) if $X$ has paths of finite variation, then $\eta_1=2$ 
and the optimal choice $q=1$ gives the bound $\O\big(2^{-n/2}\big)$; (b)  if 
$\sigma\neq0$, then the optimal choice $q=2$ yields the bound $\O(2^{-n/3})$.

The rate of decay in Proposition~\ref{prop:barrier} is essentially 
controlled by the rate of convergence in the Kolmogorov distance of 
$\ov X_T-\Delta_n^\SB$ to $\ov X_T$. 
In general, as mentioned above,
$\ov X_T-\Delta_n^\SB$ is known to converge to $\ov X_T$ weakly.
As the Kolmogorov distance does not 
metrise the topology of weak convergence (cf.~\cite[Ex.~1.8.32,~p.43]{MR1353441}), 
we require an additional assumption, such as~\ref{asm:H}, to 
obtain a rate in Proposition~\ref{prop:barrier}. 

Assumption~\ref{asm:H} holds for a wide class of L\'evy processes. 
By the Lebesgue differentiation 
theorem~\cite[Thm~6.3.3]{MR3098996}, the function 
$x\mapsto\P(\ov X_T\leq x)$ is differentiable a.e. and 
Assumption~\ref{asm:H} holds 
for $\gamma=1$ and Lebesgue almost every $M$. 
If the density of $\ov X_T$ exists and is 
bounded around $M$, then $x\mapsto\P(\ov X_T\leq x)$ is locally Lipschitz at $M$, 
again satisfying Assumption~\ref{asm:H} with $\gamma=1$. 
This is the case if the density of 
$\ov X_T$ is continuous at $M$, which holds  for stable processes or if 
$\sigma\neq0$~\cite{MR3531705}, and, more generally, if $X$ converges weakly 
under the zooming-in procedure and $\alpha>1$ in~\eqref{eq:alpha}, 
see~\cite[Lem.~5.7]{ZoomIn}. Moreover, 
by~\cite[Prop.~2]{MR3531705} and~\cite[Sec.~VI.4, Thm~19]{MR1406564}, the density of $\ov{X}_T$ is continuous at 
$M$ if the ascending ladder height process of $X$ has 
positive drift (e.g. if $X$ is spectrally negative of infinite variation) 
or if $X$ is in a certain class 
of subordinated Brownian motions~\cite[Prop.~4.5]{MR3098066}.  However, the 
continuity of the density of $\ov X_T$ is known to fail if $X$ is of bounded variation with no 
negative jumps and has a L\'evy measure with atoms~\cite[Lem.~2.4]{MR3014147}. 
Furthermore, for any $\gamma\in(0,1)$, the function 
$x\mapsto\P(\ov X_T\leq x)$ may be continuous at $M$ but not 
locally $\gamma$-H\"older continuous (see example in 
Appendix~\ref{sec:Reg} below) even if the L\'evy measure has no 
atoms, demonstrating again the necessity of an condition such 
as Assumption~\ref{asm:H} in Proposition~\ref{prop:barrier}.

We stress that, even if the density is locally bounded at $M$, it appears to be 
very difficult to give bounds (based on the L\'evy characteristics) on the value 
it takes at $M$. This means that, unlike in the case of a (locally)-Lipschitz 
function $g(\chi)$, in the context of barrier options we cannot provide 
non-asymptotic confidence intervals based on Proposition~\ref{prop:barrier}, 
cf. Subsection~\ref{subsec:CLT} below.

\subsubsection{Comparison\label{subsec:Lit_functionals}}
The results in~\cite{MR2802466,MR2759203,MR2867949,MR3138603,ZoomIn}, 
discussed in Subsection~\ref{subsec:Lit_Lp} above, yield bounds in $L^p$ on the error of a 
Lipschitz function of $(X_T,\ov X_T)$. The orders of decay are the same as those 
reported in Subsection~\ref{subsec:Lit_Lp} above for the respective approximations. The error 
of the time of the supremum $\tau_T$, geometrically convergent for the SBA by 
Theorem~\ref{thm:Lp}(a) and Proposition~\ref{prop:Lipschitz}, appears not to have been 
studied for the other algorithms.

In the case of locally Lipschitz functions, only the decay of the error in 
$L^1$ for the RWA seems to have been analysed. Define for any $q>0$ the integral
\begin{equation}\label{def:E_p}
E_+^q=\int_{[1,\infty)}e^{qx}\nu(dx).
\end{equation}
If $X$ has finite activity (i.e. $\nu(\R)<\infty$), then the bias equals 
$\O(n^{-1/2})$ if $\sigma\neq0$ and $E_+^q<\infty$ for some 
$q>2$~\cite[Prop.~5.1]{MR2867949} and $o(n^{-(q-1)/q})$ if $\sigma=0$ 
and $E_+^q<\infty$ for some $q>1$~\cite[Rem.~5.3]{MR2867949}. 
In the case $\sigma=0$ and $\nu(\R)=\infty$, for any $q>1$ satisfying 
$E_+^q<\infty$ and any arbitrarily small $\delta>0$, the bias decays as 
follows: $\O((n/\log(n))^{\delta-(q-1)/q})$ if the process is of finite variation 
(i.e. $\int_{(-1,1)}|x|\nu(dx)<\infty$), $\O(n^{\delta-(q-1)/q})$ if 
$\int_{(-1,1)}|x|\log|x|\nu(dx)<\infty$ and $\O(n^{\delta-(q-1)/(2q)})$ 
otherwise~\cite[Thm~6.2]{MR2867949}. If the L\'evy process $X$ is 
spectrally negative with jumps of finite variation (i.e. $\nu((0,\infty))=0$ and 
$\int_{(-1,0)}|x|\nu(dx)<\infty$) and if $E_+^q<\infty$ for some $q>1$, 
the error decays as $\O(n^{-1})$ (resp. $\O(n^{-1/2}\log(n))$) 
if $\sigma=0$ (resp. $\sigma\neq0$)~\cite[Prop.~6.4]{MR2867949}.

The discontinuous payoffs under variance gamma (VG), normal inverse 
Gaussian (NIG) and spectrally negative $\alpha$-stable (with $\alpha>1$) 
processes are considered in~\cite{MR3723380}. Under the assumption that 
the density of the supremum is bounded around the barrier level in all three 
models, the errors in $L^p$ of the RWA decay as $\O(n^{\delta-1})$, 
$\O(n^{\delta-1/2})$ and $\O(n^{\delta-1/\alpha})$ for arbitrarily small 
$\delta>0$, respectively~\cite[Prop.~5.5]{MR3723380}. In the case 
$\nu(\R)<\infty$ and $\sigma\neq0$, the error decays as $\O(1/\sqrt{n})$, 
see~\cite[Prop.~2.2 \& Rem.~2.3]{MR2851060}. This result was first 
established in~\cite{MR1482707} for the Brownian motion with drift. 

As noted in~\cite[Sec.~5.3]{ZoomIn}, if $X$ has a jointly continuous density 
$(t,x)\mapsto\frac{\partial}{\partial x}\P(X_t\leq x)$ bounded for $(t,x)$ away 
from the origin $(0,0)$ (e.g. if Orey's condition holds for 
$\gamma>1$~\cite[Prop.~28.3]{MR3185174} or $\sigma>0$, see also the 
paragraphs following Proposition~\ref{prop:barrier}), $\nu(\R)=\infty$ 
and $\alpha\geq 1$ (defined in~\eqref{eq:alpha}), then the error in $L^p$ of 
the RWA for a barrier option decays as $\O(n^{\delta-1/\alpha})$ for any small 
$\delta>0$. Moreover, by~\cite[Lem.~5.8]{ZoomIn}, 
$\liminf_{n\to\infty}n\P(\ov X_T>x\geq \max_{k\in\{1,\ldots,n\}} X_{kT/n})>0$ 
if $X$ has jumps of both signs. 
Put differently, the error in $L^p$ of the RWA for a general barrier option 
cannot be of order $o(n^{-1})$. As far as we know, such results for the 
WHA~\cite{MR2895413} are currently unavailable. 

\subsection{The central limit theorem (CLT) and the confidence intervals (CIs)\label{subsec:CLT}}
Let $(\chi^i_n)_{i\in\{1,\ldots,N\}}$ be the output produced by $N\in\N$ 
independent runs of~\nameref{alg:SBA} using $n$ steps. 
The \textit{Monte Carlo estimator} $\sum_{i=1}^N g(\chi^i_n)/N$ of $\E g(\chi)$, 
where $g:\R\times\R_+\times[0,T]\to\R$ is a measurable function of interest in 
applied probability (e.g. in one of the classes from 
Subsection~\ref{subsec:Functionals} above), has an error
\begin{equation}\label{eq:MC_error}
\Delta_{n,N}^g = \frac{1}{N}\sum_{i=1}^N g(\chi_n^i)-\E g(\chi).
\end{equation}
Our aim is to understand the rate of convergence of the error in~\eqref{eq:MC_error}
as the number of samples $N$ tends to infinity.

\begin{thm}[CLT]\label{thm:CLT} 
Assume $\P[\chi\in D_g]=0$, where $D_g$ is the discontinuity set of $g$, and
\begin{itemize}
\item[\normalfont(a)] 
there is a measurable function $G:\R\times\R_+\times[0,T]\to\R_+$ such that: 
\begin{itemize}
\item[\normalfont(i)] 
$|g(x,y,t)|\leq G(x,y,t)$ for all $(x,y,t)\in\R\times\R_+\times[0,T]$,
\item[\normalfont(ii)] 
for all $x\in\R$, $(y,t)\mapsto G(x,y,t)$ is nondecreasing in both coordinates,
\item[\normalfont(iii)] 
$\E [G(X_T,\ov{X}_T,T)^2]<\infty$,
\end{itemize}
\item[\normalfont(b)] 
$\E g(\chi)=\E g(\chi_n) + o(\eta_g^{-n})$ for some $\eta_g>1$.
\end{itemize}
Denote $\V [g(\chi)]=\E[(g(\chi)-\E[g(\chi)])^2]$ and set
$n_N=\lceil \log N/ \log(\eta_g^{2})\rceil$ for every $N\in\N$, where we denote 
$\lceil x\rceil =\inf\{n\in\N:n\geq x\}$ for $x\in\R$. Then the following weak 
convergence holds
\begin{equation}\label{eq:CLT}
\sqrt{N}\Delta_{n_N,N}^g \overset{d}{\to}N(0,\V [g(\chi)]), \qquad\text{as $N\to\infty$. }
\end{equation}
\end{thm}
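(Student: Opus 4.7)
The plan is to decompose the error and apply a triangular array central limit theorem. Write $\sqrt{N}\Delta^g_{n_N,N} = S_N + b_N$, where
\begin{equation*}
S_N = \frac{1}{\sqrt{N}}\sum_{i=1}^N\bigl(g(\chi^i_{n_N})-\E g(\chi_{n_N})\bigr) \quad\text{and}\quad b_N = \sqrt{N}\bigl(\E g(\chi_{n_N})-\E g(\chi)\bigr).
\end{equation*}
The choice $n_N = \lceil \log N/\log(\eta_g^2)\rceil$ guarantees $\eta_g^{n_N}\geq\sqrt{N}$, so assumption~(b) gives $|b_N| = \sqrt{N}\cdot o(\eta_g^{-n_N}) = o(1)$. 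Hence it suffices to prove $S_N\overset{d}{\to}N(0,\V [g(\chi)])$ and invoke Slutsky's lemma.

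For $S_N$, I would apply the Lindeberg--Feller triangular array CLT to the row-iid array $\{(g(\chi^i_{n_N})-\E g(\chi_{n_N}))/\sqrt{N}\}_{i=1}^N$, verifying convergence of variances and the Lindeberg condition. The key ingredient is a uniform pathwise bound: by Theorem~\ref{thm:error}, the components of $\chi_n=(X_T,\ov X_T-\Delta^\SB_n,\tau_T-\delta^\SB_n)$ satisfy $0\leq\ov X_T-\Delta^\SB_n\leq\ov X_T$ and $0\leq\tau_T-\delta^\SB_n\leq T$ almost surely (the third coordinate, by~\eqref{eq:SBA}, is a subsum of the nonnegative lengths $\ell_1,\ldots,\ell_n,L_n$ whose total equals $T$). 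Combined with the monotonicity in~(a)(ii), this yields the deterministic domination
\begin{equation*}
|g(\chi_n)| \leq G(\chi_n) \leq G(X_T,\ov X_T,T) \qquad\text{for all } n\in\N,
\end{equation*}
where the right-hand side has finite second moment by~(a)(iii).

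Since $L_n\to 0$ a.s.\ (so $\Delta^\SB_n,\delta^\SB_n\to 0$ a.s.\ by Theorem~\ref{thm:error}), we get $\chi_n\to\chi$ a.s., and the hypothesis $\P[\chi\in D_g]=0$ together with the continuous mapping theorem yields $g(\chi_n)\to g(\chi)$ a.s. Dominated convergence via the bound above gives $\E g(\chi_n)\to\E g(\chi)$ and $\E g(\chi_n)^2\to\E g(\chi)^2$, hence $\V [g(\chi_n)]\to\V [g(\chi)]$, which is the variance-convergence condition. The same domination shows that $\{(g(\chi_n)-\E g(\chi_n))^2\}_{n\in\N}$ is uniformly integrable, so that for every $\epsilon>0$,
\begin{equation*}
\E\bigl[(g(\chi_{n_N})-\E g(\chi_{n_N}))^2\cdot 1_{\{|g(\chi_{n_N})-\E g(\chi_{n_N})|>\epsilon\sqrt{N}\}}\bigr]\to 0
\end{equation*}
as $N\to\infty$, which is precisely the Lindeberg condition. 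Lindeberg--Feller then delivers $S_N\overset{d}{\to}N(0,\V [g(\chi)])$, and combining with $b_N\to 0$ concludes the proof.

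The main subtlety, and essentially the only step requiring care, is establishing the pathwise domination $G(\chi_n)\leq G(X_T,\ov X_T,T)$. While natural, it hinges on recognising that both non-$X_T$ coordinates of $\chi_n$ remain within $[0,\ov X_T]$ and $[0,T]$ a.s., respectively---a structural property of the SBA read off from Theorem~\ref{thm:error} and~\eqref{eq:SBA}. Once this bound is in place, everything else (variance convergence, uniform integrability, Lindeberg--Feller, Slutsky) is routine.
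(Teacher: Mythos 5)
Your proposal is correct and follows essentially the same route as the paper: split off the deterministic bias (which vanishes by the choice of $n_N$ and assumption (b)), use the coupling bounds $\ov X_T-\Delta^\SB_n\leq \ov X_T$, $\tau_T-\delta^\SB_n\leq T$ to dominate $g(\chi_n)$ by $G(X_T,\ov X_T,T)$, deduce variance convergence by dominated convergence, and verify Lindeberg's condition for the triangular array from this $n$-independent square-integrable dominating variable. The paper's proof is the same argument, merely phrased with explicit normalised variables $\zeta_{i,N}$ and the dominating sequence $\xi_i=|\zeta_i|+\E|\zeta_i|$.
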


Theorem~\ref{thm:CLT} is not an iid CLT since the bias of the MC estimator
forces the increase in the 
number of steps taken by~\nameref{alg:SBA} as the number of samples $N\to\infty$. 
 Its proof (see Subsection~\ref{proof:CLT} below) establishes Lindeberg's 
condition and then applies the CLT for triangular arrays. 
The condition $\P [\chi\in D_g]=0$ is satisfied 
if e.g. the Lebesgue measure of $D_g$ is zero and $0$ is regular for $X$ for both half-lines~\cite[Thm~3]{MR3098676}. This assumption is important as 
it allows us to construct asymptotic confidence intervals for barrier options using 
the limit in~\eqref{eq:CLT}. 
Assumption (a) ensures the convergence of $\V[g(\chi_n)]$ to $\V[g(\chi)]$ 
and might seem restrictive at first sight. However, the function $G$ is very easy to 
identify (see Remark~\ref{rem:CLT-G} below) in the contexts of 
Propositions~\ref{prop:Lipschitz},~\ref{prop:LocLip} and~\ref{prop:barrier}, 
where Assumption~(b) also clearly holds. 

Since $|\Delta_{n,N}^g|\leq |\E g(\chi)-\E g(\chi_n)| 
+ |\Delta_{n,N}^g-\E \Delta_{n,N}^g|$, we may construct a confidence interval 
for the MC estimator $\sum_{i=1}^N g(\chi^i_n)/N$
at level $1-\epsilon\in(0,1)$ using the implication:
\begin{equation}\label{eq:CIs_rule}
\begin{rcases}
|\E g(\chi)-\E g(\chi_n)|<r_1, \\
\P (|\Delta_{n,N}^g-\E \Delta_{n,N}^g|< r_2) \geq 1-\epsilon,
\end{rcases}\implies \P (|\Delta_{n,N}^g|<r_1+r_2)\geq 1-\epsilon.
\end{equation}
In~\eqref{eq:CIs_rule}, $r_1$ may be chosen as a function of the number $n$ 
of steps in~\nameref{alg:SBA} in various ways depending on the properties 
of $g$ (see Propositions~\ref{prop:Lipschitz} and~\ref{prop:LocLip} of 
Subsection~\ref{subsec:Functionals}). Note that this requires the explicit 
dependence of the constant on the model characteristics. 

Having fixed $n$, pick $r_2$ in~\eqref{eq:CIs_rule} as a function of 
$\epsilon$ either via concentration inequalities (not relying on 
Theorem~\ref{thm:CLT}) or the CLT in Theorem~\ref{thm:CLT}: \\
\noindent (i) Non-asymptotic CI: 
by Chebyshev's inequality 
$\P\big(|\Delta_{n,N}^g-\E \Delta_{n,N}^g|>r\big)\leq\V[g(\chi_n)]/(r^2N)$, 
we only need to bound the variance $\V[g(\chi_n)]$ (e.g. by the function $G$ in 
Remark~\ref{rem:CLT-G}).  See e.g.~\cite[Thm~1]{ConfIntBoundedRV} for a 
sharper choice of $r_2$.\\ 
\noindent (ii) Asymptotic CI: since $\Delta_{n,N}^g-\E \Delta_{n,N}^g=
(1/N)\sum_{i=1}^N g(\chi^i_n)-\E g(\chi_n)$, we may use the CLT for fixed $n$ 
in Remark~\ref{rem:CLT_fixed_n} below (as in~(i) we bound $\V[g(\chi_n)]$ by 
elementary methods).

In the case we do not have access to the constants in the bound on the bias 
in~\eqref{eq:CIs_rule} in terms of the model parameters (e.g. barrier options 
in Proposition~\ref{prop:barrier}), we apply the CLT result in
Theorem~\ref{thm:CLT} to the estimator $\Delta_{n_N,N}^g$ directly, to obtain 
an asymptotic CI. See Subsection~\ref{subsec:CIs_numerics} below for the 
numerical examples of asymptotic and non-asymptotic CIs. 

\subsection{Computational complexity of~\nameref{alg:SBA} and the multilevel Monte Carlo}
\label{subsec:Complexity_SBA}

Assume that the expected computational cost of drawing a sample from the distribution 
$F(t,\cdot)$ 
in~\nameref{alg:SBA}
is bounded above by a constant that does not depend on 
$t\in[0,T]$. 
Then the expected computational cost of 
a single draw from the law of $\chi_n$ via~\nameref{alg:SBA} is bounded by $\O(n)$. 
The CLT in Theorem~\ref{thm:CLT} (applicable to (locally) Lipschitz and barrier-type functions, cf. Subsection~\ref{subsec:CLT} above) implies 
that the $L^2$-norm of the error in~\eqref{eq:MC_error} of the MC estimator 
can be made smaller than $\epsilon$,  i.e. 
$\E[(\Delta_{n,N}^g)^2]\leq\epsilon^2$, at a computational cost of 
$\O(\epsilon^{-2}\log\epsilon)$ as $\epsilon\to0$. 
The cost of the Monte Carlo estimator based on~\nameref{alg:SBA}
is thus only a log-factor away from the optimal Monte Carlo cost of 
$\O(\epsilon^{-2})$, arising when one has access to exact 
simulation with finite expected running time.

The main aim of MLMC, introduced in~\cite{Heinrich_MLMC, MR2436856}, is to reduce the computational cost of an 
MC algorithm for a given level of accuracy. 
We will apply a general MLMC result~\cite[Thm~1]{MR2835612}, stated in our 
setting for ease of reference as Theorem~\ref{thm:MLMC} in 
Appendix~\ref{sec:MLMC_1} below. Let $P=g(\chi)$ and $P_n=g(\chi_n)$, $n\in\N$, 
for any function $g$ that satisfies the assumptions of Theorem~\ref{thm:CLT} 
(see also Remark~\ref{rem:CLT-G} below). Note that the expected computational cost 
of a single draw in Theorem~\ref{thm:MLMC} is allowed to grow geometrically in
$n$. Since in the context of the present section sampling $P_n$ has a cost of 
$\O(n)$, we may choose an arbitrarily small rate $q_3>0$ in Theorem~\ref{thm:MLMC}. 

A key component of any MLMC scheme is the coupling $(P_n,P_{n+1})$. 
In the case of~\nameref{alg:SBA} (and the notation therein), this consists of using 
the same sequence of sticks $(\lambda_k)_{k\in\{1,\ldots,n\}}$ and increments 
$(\xi_k)_{k\in\{1,\ldots,n\}}$
in the consecutive  levels and setting $\varsigma_{n}=\xi_{n+1}+\varsigma_{n+1}$, cf. 
the coupling of Subsection~\ref{subsec:coupling}. Since 
\begin{equation}\label{eq:Var_L2}
\V[P_{n+1}-P_n]\leq\E[(P_{n+1}-P_n)^2]\leq 2(\E[(P_{n+1}-P)^2]+\E[(P-P_n)^2]),
\end{equation}
Assumption~(b) in Theorem~\ref{thm:MLMC} follows easily from the bound 
$\E[(P-P_n)^2]=\O(2^{-nq_2})$ for all functions $g$ of interest 
(see Propositions~\ref{prop:Lipschitz},~\ref{prop:LocLip} 
and~\ref{prop:barrier} above for the corresponding $q_2>0$). 
These observations imply that the computational complexity of 
the MLMC estimator in~\eqref{eq:MLMC_estim}
is bounded above by 
$\O(\epsilon^{-2})$
(take $q_3=q_2/2$ for all choices of $g$ in the
propositions above).
The implementation 
of the MLMC estimator based on~\nameref{alg:SBA} 
for a barrier-type function $g$ under the NIG model 
numerically confirms this bound, see 
Subsection~\ref{subsec:MLMC_NIG_barrier_ex} below. 

\subsubsection{Comparison\label{subsec:Lit_complexity}}
The computational complexity of 
MC and  MLMC procedures based on 
the~\nameref{alg:SBA} is given by  
$\O(\epsilon^{-2}|\log\epsilon|)$ and $\O(\epsilon^{-2})$, respectively, 
for a function $g(\chi)$, which is Lipschitz, locally Lipschitz or barrier-type.
This makes~\nameref{alg:SBA} robust, as its performance does not depend on the 
structure of the problem. In particular, minor changes in model parameters 
will not result in major differences in the computational complexity. 
We compare this to the extant MC and MLMC algorithms in the literature.

\underline{Lipschitz function $g$.} We first review the results for Lipschitz functions 
of $(X_T,\ov X_T)$. 
For the RWA, $\alpha$ as in~\eqref{eq:alpha} below and a small $\delta>0$ 
($\delta=0$ if $\alpha\in\{1,2\}$),~\cite[Thm~4.1]{ZoomIn} implies that the 
cost of an MC estimator is $\O(\epsilon^{-2-\max\{1,\alpha+\delta\}})$. In 
particular, if $\sigma\neq0$, the complexity of the RWA is $\O(\epsilon^{-4})$ 
(see also~\cite{MR2867949,MR2996014,MR3723380}). Their MLMC counterparts, 
derived following the procedure of~\cite{MR3723380}, together with the bounds 
in~\cite[Thm~4.1]{ZoomIn}
and~\eqref{eq:Var_L2},
have a complexity of 
$\O(\epsilon^{-2}\log^2(\epsilon))$. Moreover, if the process is spectrally negative 
without a Brownian component and either an infinite variation stable 
process~\cite[Prop.~5.5]{MR3723380} or of finite
variation~\cite[Lem.~6.5]{MR2867949}, then the MLMC estimator 
for a Lipschitz function of  $(X_T,\ov X_T)$
has optimal cost 
$\O(\epsilon^{-2})$. 
For the WHA 
(see Subsection~\ref{subsec:Related_Lit} above),
the MC (resp. MLMC) estimator 
for a Lipschitz function of $(X_T,\ov X_T)$
has a complexity of 
$\O(\epsilon^{-4})$ (resp. $\O(\epsilon^{-3})$) if the process is of finite 
variation and of $\O(\epsilon^{-6})$ (resp. $\O(\epsilon^{-4})$) 
otherwise~\cite[Thm~4.6]{MR3138603}.
For the JAGA, the complexity of the MC estimator is 
$\O(\epsilon^{-2}\max\{\epsilon^{-\max\{1,\beta_+\}},
	\epsilon^{-4\beta_+/(4-\beta_+)}\log(1/\epsilon)^{2\beta_+/(4-\beta_+)}\})$ 
	if $\sigma=0$ and  
$\O(\epsilon^{-2-\max\{2,4\beta_+/(4-\beta_+)\}})$ otherwise 
(see~\eqref{eq:BG+} for the definition of $\beta_+\in(0,2]$). 
The complexity of the MLMC estimator is 
$\O(\epsilon^{-2}\log(1/\epsilon)^{3\cdot\1{\sigma\ne 0}})$ if $\beta_+< 1$,
$\O(\epsilon^{-2}\log(1/\epsilon)^{2+\1{\sigma\ne 0}})$ if $\beta_+=1$, 
$\O(\epsilon^{-2-4(1-1/\beta_+)}\log(1/\epsilon)^{2-2/\beta_+})$ 
if $\beta_+\in(1,4/3]$ and $\sigma\ne0$, and 
$\O(\epsilon^{-2-8(\beta_+-1)/(4-\beta_+)}\log(1/\epsilon)^{4(\beta_+-1)/(4-\beta_+)})$ 
otherwise. In the worst case $\beta_+=2$, the MLMC estimator based on the JAGA 
has a complexity of $\O(\epsilon^{-6})$. 

\underline{Locally Lipschitz function $g$.}
In the case of locally Lipschitz functions, only the MC analysis 
of the RWA appears to be available in the literature. The error in this case is at 
best $\O(\epsilon^{-3})$, attained only when the L\'evy process is spectrally 
negative, with jumps of finite variation and no Brownian component (i.e.
$\nu(\R_+)=0$, $\int_{(-1,0)}|x|\nu(dx)<\infty$ and $\sigma=0$) and 
the inequality $E_+^q<\infty$ holds for some $q>1$~\cite[Prop.~6.4]{MR2867949} (recall the
definition of $E_+^q$ in~\eqref{def:E_p} above). If $X$ has a Brownian component 
(i.e. $\sigma\neq0$), then the cost is either $\O(\epsilon^{-4})$ if $\nu(\R)<\infty$ and 
$E_+^q<\infty$ for some $q>2$~\cite[Prop.~6.4]{MR2867949} or 
$\O(\epsilon^{-4}\log^2(\epsilon))$ if $X$ is spectrally negative with jumps of finite 
variation and $E_+^q<\infty$ for some $q>1$~\cite[Prop.~5.1]{MR2867949}. If 
$\sigma=0$ and $X$ has infinite activity, then for any arbitrarily small $\delta>0$, 
the condition $E_+^q<\infty$ (for some $q>1$) implies an MC complexity of 
$\O(\epsilon^{-2-2q/(q-1)-\delta})$. In the last case, the decay may be improved to 
$\O(\epsilon^{-2-q/(q-1)-\delta}|\log(\epsilon)|)$ (resp. 
$\O(\epsilon^{-2-q/(q-1)-\delta})$) if $\int_{(-1,1)}|x|\nu(dx)<\infty$ (resp. 
$\int_{(-1,1)}|x|\log|x|\nu(dx)<\infty$)~\cite[Thm~6.2]{MR2867949}.

\underline{Barrier-type function $g$.}
To the best of our knowledge, there are no non-parametric MLMC results in the 
literature for barrier options under the RWA. Recently the MLMC for the RWA 
under VG, NIG and spectrally negative $\alpha$-stable (with $\alpha>1$) 
processes has been shown in~\cite{MR3723380} to have the computational cost of 
$\O(\epsilon^{-2-\delta})$, $\O(\epsilon^{-3-\delta})$ and 
$\O(\epsilon^{-1-\alpha-\delta})$ for small $\delta>0$, respectively. 
We are not aware of any results for WHA, introduced in~\cite{MR2895413}, for 
barrier options.

\subsection{Unbiased estimators\label{subsec:UnbiasedEstimators}}
Randomising the number of levels and samples at each level in the MLMC estimator 
from the previous section yields an unbiased estimator~\eqref{eq:Matti_estimator} 
below, see e.g.~\cite{MR3422533,MR3782809}. There are numerous ways of 
implementing such a debiasing technique, typically based on a random variable $R$ 
on the integers satisfying $\P[R=n]>0$ for all $n\in\N$, with the tail of the law of $R$ in some 
way linked to the asymptotic decay of the level variances in the MLMC. While other 
estimators from~\cite{MR3782809} could be considered, here we focus on the 
\emph{single term estimator} (STE) and the \emph{independent sum estimator} (ISE). 
For these two estimators, a sequence 
$(R_j)_{j\in\{1,\ldots,N\}}$ of independent random variables specifies 
 the number of samples $N_k$ at level $k\in\N$ as follows: 
$N_k=\sum_{j=1}^N \1{R_j=k}$ for STE and $N_k=\sum_{j=1}^N \1{R_j\geq k}$ for ISE. 
For both estimators, 
we use the uniform stratified sampling of the sequence
$(R_i)_{i\in\{1,\ldots,N\}}$:
each $R_j$  is drawn independently
and distributed as $R$ conditioned to be between its $(j-1)/N$ and $j/N$ quantiles.

The probabilities $(\P[R=n])_{n\in\N}$
that
maximise the asymptotic inverse relative efficiencies (see Appendix~\ref{sec:MLMC_2} below for definition)
for the STE and ISE, denoted by
by $(p^\STE_n)_{n\in\N}$ and $(p^\ISE_n)_{n\in\N}$, respectively, are in general given by the formulae in~\eqref{eq:optimal_prob_unbiased}. 
In the case of the unbiased estimator for $\E P$, where $P=g(\chi)$, the optimal probabilities take the form:
\begin{itemize}
\item (Lipschitz) If $g$ is as in Proposition~\ref{prop:Lipschitz}, we set
\[p_n^\STE=\frac{2^{-n/2}/\sqrt{n}}{\sum_{k=1}^\infty2^{-k/2}/\sqrt{k}},
\qquad p^\ISE_n=\frac{2^{-(n-1)/2}}{\sqrt{n}}-\frac{2^{-n/2}}{\sqrt{n+1}}.\]
\item (Locally Lipschitz) If $g$, $q$ and $q'=(1-1/q)^{-1}$ are as in 
Proposition~\ref{prop:LocLip}, we set
\[p_n^\STE=\frac{2^{-n/(2q')}/\sqrt{n}}
{\sum_{k=1}^\infty2^{-k/(2q')}/\sqrt{k}},
\qquad p^\ISE_n=\frac{2^{-(n-1)/(2q')}}{\sqrt{n}}-
\frac{2^{-n/(2q')}}{\sqrt{n+1}}.\]
\item (Barrier-type) If $g$, $\gamma$ and $q$ are as in 
Proposition~\ref{prop:barrier}, we set
\[p_n^\STE=\frac{\eta_q^{-n\gamma/(2\gamma+2q)}/\sqrt{n}}
{\sum_{k=1}^\infty\eta_q^{-k\gamma/(2\gamma+2q)}/\sqrt{k}},
\qquad p^\ISE_n=\frac{\eta_q^{-(n-1)\gamma/(2\gamma+2q)}}{\sqrt{n}}-
\frac{\eta_q^{-n\gamma/(2\gamma+2q)}}{\sqrt{n+1}}.\]
\end{itemize}

It is interesting to note that the choices in the Lipschitz (resp. locally 
Lipschitz) case is independent of the structure of the L\'evy process $X$
(resp. dependent only through its exponential moments). This invariance 
reinforces the idea that~\nameref{alg:SBA} is robust.
It is a consequence of the fact that $\eta_p$ (defined in~\eqref{eq:eta}) equals $2$ for 
$p\geq 2$. 

\section{\label{sec:Numerical-Examples}Numerical examples}

The implementation of~\nameref{alg:SBA} above can be found in the 
repository~\cite{Jorge_GitHub} together with a simple algorithm for the 
simulation of the increments of the VG, NIG and weakly stable processes.
This implementation was used in Sections~\ref{subsec:ComparisonSBA_RWA} below.

\subsection{Numerical comparison: SBA and RWA}
\label{subsec:ComparisonSBA_RWA}
Let $X=(X_t)_{t\geq0}$ be given by $X_t= B_{Z_t}+bt$, where $Z$ is a 
subordinator with L\'evy measure $\nu_Z(dx)=\1{x>0}\gamma 
x^{-\alpha-1}e^{-\lambda x}dx$ 
($\alpha\in[0,1)$, $\gamma,\lambda>0$) and drift 
$\sigma_Z\geq0$, $B$ is a standard Brownian motion and $b\in\R$. 
The L\'evy measure of $X$ by~\cite[Thm~30.1]{MR3185174} equals 
$\nu(dx)/dx=\frac{\gamma}{\sqrt{2\pi}} |x|^{-2\alpha-1} 
	\int_0^\infty s^{-\alpha-3/2}e^{-\lambda sx^2-s^{-1}/2}ds$, 
implying that the Blumenthal-Getoor index of $X$ is 
$\beta=2\alpha\in[0,2)$, and its Brownian component equals 
$\sigma^2=\sigma_Z^2$. Moreover, the increment $X_t$ can be 
simulated in constant expected computational time for any $t>0$.

We consider the estimator $\sum_{i=1}^N g(\chi^i_n)/N$, where 
$(\chi^i_n)_{i\in\{1,\ldots,N\}}$ are $N$ iid samples produced by 
running the~\nameref{alg:SBA} over $n$ steps. We compare the 
results with the output of the RWA in~\eqref{eq:RWA}, based on a 
time step of size $T/2^n$ and the same number $N$ of iid samples. 
The function $g(\chi)$ corresponds to either a lookback put or an 
up-and-out call under the exponential L\'evy model $S=S_0\exp(X)$. 
Figure~\ref{fig:lookback_TSBM} shows that the accuracy of the 
two algorithms is comparable as suggested by 
Propositions~\ref{prop:LocLip} and~\ref{prop:barrier} above 
(note $E_\pm^q<\infty$ if and only if $q^2<2\lambda$, 
since $\E\big[e^{qX_t}\big]=e^{bt}\E\big[e^{q^2Z_t/2}\big]$).

\begin{figure}[ht]
	\begin{tikzpicture} 
			\begin{axis} 
			[
			title={Lookback put: $g(\chi)=\ov{S}_T-S_T$},
			ymin=1.35,
			ymax=1.66,
			xmin=4.95,
			xmax=20.05,
			xlabel={$n$},
			width=7.85cm,
			height=4.5cm,
			axis on top=true,
			axis x line=bottom, 
			axis y line=middle,
			legend style={at={(1,.025)},anchor=south east}]			
			\addplot[
			densely dotted,
			color=black,
			style = thick,
			]
			coordinates {(5,1.4230)(6,1.4814)(7,1.5284)(8,1.5629)(9,1.5901)(10,1.6090)(11,1.6186)(12,1.6253)(13,1.6340)(14,1.6377)(15,1.6395)
			};
			\addplot[
			dashed,
			color=black,
			style = thick,
			]
			coordinates {(5,1.4489)(6,1.5153)(7,1.5609)(8,1.5889)(9,1.6097)(10,1.6245)(11,1.6335)(12,1.6359)(13,1.6397)(14,1.6437)(15,1.6471)(16,1.6458)(17,1.6472)(18,1.6485)(19,1.6462)(20,1.6488)
			};
			\addplot[
			solid,
			color=black,
			]
			coordinates {(5,1.6480829339511918)(20,1.6480829339511918)
			};
			\legend {RWA with time step $T/2^n$, 
				SBA after $n$ steps, 
				$\E g(\chi)$};
			\end{axis}
	\end{tikzpicture}
	\begin{tikzpicture} 
			\begin{axis} 
			[
			title={Up-and-out call: $g(\chi)= (S_T-K_0)^+\cdot
				\1{\ov{S}_T\leq M}$},
			ymin=.407,
			ymax=.44,
			xmin=4.95,
			xmax=20.05,
			xlabel={$n$},
			width=7.85cm,
			height=4.5cm,
			axis on top=true,
			axis x line=bottom, 
			axis y line=middle,
			legend style={at={(1,.4)},anchor=south east}]
			\addplot[
			densely dotted,
			color=black,
			style = thick,
			]
			coordinates {(5,0.4325)(6,0.4231)(7,0.4200)(8,0.4140)(9,0.4151)(10,0.4118)(11,0.4134)(12,0.4097)(13,0.4122)(14,0.4114)(15,0.4107)
			};
			\addplot[
			dashed,
			color=black,
			style = thick,
			]
			coordinates {(5,0.4187)(6,0.4159)(7,0.4146)(8,0.4149)(9,0.4121)(10,0.4110)(11,0.4100)(12,0.4119)(13,0.4088)(14,0.4122)(15,0.4118)(16,0.4110)(17,0.4106)(18,0.4104)(19,0.4109)(20,0.4108)
			};
			\addplot[
			solid,
			color=black,
			]
			coordinates {(5,0.4108943884278457)(20,0.4108943884278457)
			};
			\legend {RWA with time step $T/2^n$, 
				SBA after $n$ steps, 
				$\E g(\chi)$};
			\end{axis}
	\end{tikzpicture}
\caption{\label{fig:lookback_TSBM}
We take $\alpha=0.75$, $\gamma=0.1$, $\lambda=4$, $\sigma_Z=0.05$, 
$b=-0.05$ and $S_0=2$, $K_0=3$, $M=5$, $T=1$ and $N=10^7$. The value 
$\E g(\chi)$ is obtained by running~\nameref{alg:SBA} for 
$n=100$ steps and using $N=10^8$ samples. The RWA is approximately 
$(2^n/n)$-times slower than the SBA for the same amount of bias, 
making it infeasible for  $n>15$ as at least $60 000<2^n$ steps are 
needed in the time interval $[0,1]$.}
\end{figure}

\subsection{Asymptotic and non-asymptotic CIs\label{subsec:CIs_numerics}}

Let $X$ be a Normal Inverse Gaussian process (NIG) with parameters
$(b,\kappa,\sigma,\theta)$, i.e. a L\'evy process with characteristic
function $\E\big[e^{iuX_t}\big] =\exp(t(b+1/\kappa)-(t/\kappa)
\sqrt{1-2iu\theta\kappa+\kappa\sigma^2u^2})$, whose L\'evy measure is given by 
\[\frac{\nu(dx)}{dx} =\frac{C}{|x|}e^{Ax}K_1(B|x|),\quad 
\text{with}\quad A =\frac{\theta}{\sigma^{2}}, \quad 
B =\frac{\sqrt{\theta^2+\sigma^{2}/\kappa}}{\sigma^{2}}, \quad 
C =\frac{\sqrt{\theta^2 + 2\sigma^{2}/\kappa}}{2\pi\sigma\kappa^{3/2}},\]
where $K_1$ is the modified Bessel function of the second kind, which satisfies
\[K_1(x)=\frac{1}{x}+\O(1),\text{ as }x\to0,\quad K_1(x)=e^{-x}\sqrt{\frac{\pi}{2|x|}}( 1+\O(1/|x|) ),\text{ as }x\to\infty.\]
We simulate the increments of the NIG process by~\cite[Alg.~6.12]{MR2042661}.
Figure~\ref{fig:NIG-ConfInt} presents confidence intervals at level 
$1-\epsilon=99\%$ for the prices of hindsight put and barrier up-and-out call
under the NIG model $S=S_0\exp(X)$.

The non-asymptotic CI for the hindsight put is constructed via Chebyshev's 
inequality as discussed in Subsection~\ref{subsec:CLT} above. In particular, 
note that the payoff of the hindsight put $g:(x,y,t)\mapsto(K_0-S_0e^y)^+$ is 
non-increasing in $y$ and does not depend on $x$ and $t$. Since $\ov{X}_T$  
dominates the second coordinate $\ov X_T-\Delta_n^\SB$ of the SBA $\chi_n$
in~\eqref{eq:SBA}, we apply $\E g(\chi_n)\geq \E g(\chi)$ and find
\begin{equation*}
\begin{rcases}
0\leq \E g(\chi_n) - \E g(\chi) < r_1, \\
\P (|\Delta_{n,N}^g-\E \Delta_{n,N}^g|< r_2)  \geq 1-\epsilon
\end{rcases}\implies \P (-r_1-r_2 < \Delta_{n,N}^g < r_2)\geq 1-\epsilon,
\end{equation*}
where $\Delta_{n,N}^g$ is defined in~\eqref{eq:MC_error}, reducing the upper 
bound of the CI to the error $r_2$, which depends on the bound on $g$
and the number of samples $N$ but not on $n$.

As explained in Section~\ref{subsec:CLT} above, if explicit constants in 
the bounds on the bias are not available in terms of the model parameters, 
as is the case with an up-and-out call option (see 
Proposition~\ref{prop:barrier} above and remarks following it),
we resort to the CLT in Theorem~\ref{thm:CLT} above. The plot on the right in 
Figure~\ref{fig:NIG-ConfInt} depicts the asymptotic CI for an up-and-out call 
as a function of $\log_2 N$, where $N$ is the number of samples used to 
estimate $\E g(\chi)$ and the asymptotic variance in~\eqref{eq:CLT} of 
Theorem~\ref{thm:CLT} is estimated using the sample. 

\begin{figure}[ht]
	\begin{tikzpicture} 
		\begin{axis} 
		[
		title={Non-asymptotic CI for 
			$g(\chi)= (K_0-\ov{S}_T)^+$},
		ymin=-.3,
		ymax=0.45,
		xmin=1,
		xmax=15,
		xlabel={$n$},
		width=7.85cm,
		height=5.5cm,
		axis on top=true,
		axis x line=bottom,
		axis y line=middle,
		legend style = {at={(1,0.02)}, anchor=south east},
		x label style={at={(axis description cs:1.025,0.175)},anchor=north},
		]
		\addplot[
		dashed,
		mark=+,
		color=black,
		]
		coordinates {(1,0.4069)(2,0.3225)(3,0.2782)(4,0.2549)(5,0.2422)(6,0.2362)(7,0.2327)(8,0.2307)(9,0.2300)(10,0.2293)(11,0.2293)(12,0.2289)(13,0.2289)(14,0.2289)(15,0.2291)(16,0.2289)(17,0.2286)(18,0.2288)(19,0.2288)(20,0.2290)
		};
		\addplot[
		densely dotted,
		color=black,
		style = thick,
		]
		coordinates {(1,0.4116)(2,0.3273)(3,0.2830)(4,0.2596)(5,0.2470)(6,0.2409)(7,0.2374)(8,0.2355)(9,0.2348)(10,0.2340)(11,0.2340)(12,0.2336)(13,0.2337)(14,0.2337)(15,0.2338)(16,0.2337)(17,0.2334)(18,0.2336)(19,0.2336)(20,0.2338)
		};
		\addplot[
		densely dotted,
		color=black,
		style = thick,
		]
		coordinates {(1,-8.661)(2,-4.236)(3,-2.014)(4,-0.899)(5,-0.340)(6,-0.058)(7,0.0821)(8,0.1527)(9,0.1884)(10,0.2060)(11,0.2152)(12,0.2195)(13,0.2218)(14,0.2230)(15,0.2237)(16,0.2239)(17,0.2238)(18,0.2240)(19,0.2241)(20,0.2243)
		};
		\addplot[
		solid,
		color=black,
		]
		coordinates {(1,0.2290)(20,0.2290)
		};
		\legend {$\Delta_{n,N}^g+\E g(\chi)$,
			Upper bound, Lower bound, 
			$\E g(\chi)$};
		\end{axis}
	\end{tikzpicture}
	\begin{tikzpicture} 
		\begin{axis} 
		[
		title={Asymptotic CI for $g(\chi)= (S_T-K_0)^+\cdot \1{\ov{S}_T\leq M}$},
		ymin=0,
		ymax=1.1,
		xmin=4,
		xmax=24,
		xlabel={$\log_2 N$},
		width=7.85cm,
		height=5.5cm,
		axis on top=true,
		axis x line=bottom,
		axis y line=middle,
		legend style = {at={(1,.4)}, anchor=south east},
		x label style={at={(axis description cs:0.93,0.3)},anchor=north},
		]
		\addplot[
		dashed,
		mark=+,
		color=black,
		]
		coordinates {(5,0.4567)(6,0.2343)(7,0.2949)(8,0.2750)(9,0.3440)(10,0.2960)(11,0.3391)(12,0.3181)(13,0.3030)(14,0.3038)(15,0.3069)(16,0.3041)(17,0.3046)(18,0.3061)(19,0.3067)(20,0.3038)(21,0.3053)(22,0.3054)(23,0.3058)(24,0.3054)
		};
		\addplot[
		densely dotted,
		color=black,
		style = thick,
		]
		coordinates {(5,1.0735)(6,0.6704)(7,0.6033)(8,0.4931)(9,0.4982)(10,0.4051)(11,0.4162)(12,0.3726)(13,0.3415)(14,0.3310)(15,0.3261)(16,0.3177)(17,0.3143)(18,0.3129)(19,0.3115)(20,0.3073)(21,0.3077)(22,0.3071)(23,0.3070)(24,0.3062)
		};
		\addplot[
		densely dotted,
		color=black,
		style = thick,
		]
		coordinates {(5,-0.160)(6,-0.201)(7,-0.013)(8,0.0569)(9,0.1898)(10,0.1870)(11,0.2619)(12,0.2635)(13,0.2644)(14,0.2765)(15,0.2876)(16,0.2905)(17,0.2950)(18,0.2992)(19,0.3019)(20,0.3004)(21,0.3029)(22,0.3037)(23,0.3046)(24,0.3045)
		};
		\addplot[
		solid,
		color=black,
		]
		coordinates {(1,0.3054)(20,0.3054)
		};
		\legend {$\Delta_{n_N,N}^g+\E g(\chi)$,
			Upper bound,Lower bound, 
			$\E g(\chi)$};
		\end{axis}
		\end{tikzpicture}
\caption{\label{fig:NIG-ConfInt}
The pictures show the point estimation and CIs for the hinsight put 
(left) and the up-and-out call (right) under the NIG model. 
NIG parameters: $\sigma=1$, $\theta=0.1$, $\kappa=0.1$ and 
$b=-0.05$. Option parameters: $S_0=2$, $K_0=3$, $M=8$ and 
$T=1$. The number of samples in the plot on the left equals 
$N=10^7$. The confidence level of $1-\epsilon=99\%$ applies to 
both plots.}
\end{figure}

\subsection{MLMC for a barrier payoff under NIG\label{subsec:MLMC_NIG_barrier_ex}}
We apply the MLMC algorithm for the SBA to the up-and-out call option 
in~\cite[Sec.~6.3]{MR3723380} (with payoff $g(\chi)=(S_T-K_0)^+\cdot
\1{\ov{S}_T\leq M}$, where $S_T=S_0\exp(X_T)$) under the NIG model. The top 
left (resp. right) plot in Figure~\ref{fig:NIG-MLMC} graphs the estimated and 
theoretically predicted mean (resp. variance) of the difference of two 
consecutive levels (as a function of $n$). 

It is common practice in MLMC to estimate the bias and level variances (rather 
than use the theoretical bounds such as those in Theorem~\ref{thm:MLMC}) first
and then compute the numbers of samples $(N_k)_{k\in\{1,\ldots,n\}}$ at each 
level by solving a simple optimisation problem. This often improves the overall 
performance of the algorithm but requires an initial computational investment. 
The fact that $(N_k)_{k\in\{1,\ldots,n\}}$ are based on estimates gives rise to 
some oscillation in their behaviour and, consequently, in that of the 
computational cost. However, as expected from~\eqref{eq:MLMC_const}, the bottom 
left plot in Figure~\ref{fig:NIG-MLMC} shows that $(N_k)_{k\in\{1,\ldots,n\}}$ 
constitute approximately straight lines for various levels of accuracy. The 
bottom right plot in Figure~\ref{fig:NIG-MLMC} shows that the computational 
complexity is approximately constant, as expected from the analysis in
Section~\ref{subsec:Complexity_SBA} above. Moreover, the difference in the 
complexity between the MC and MLMC is numerically seen to be small. This is not 
surprising since, as explained in Section~\ref{subsec:Complexity_SBA} above, 
the two differ by a log-factor. The analogous figure for the MLMC based on the 
RWA for the identical model parameters and option is given 
in~\cite[Fig.~7]{MR3723380}.

\begin{figure}[ht]
	\begin{tikzpicture}
		\pgfplotsset{
			scale only axis,
			xmin=2, xmax=5
		} 
		\begin{axis}[
		width=6.75cm,
		height=3cm,
		axis on top=true,
		axis x line=bottom,
		axis y line*=left,
		xlabel = {$n$},
		xmin = 2, xmax = 20,
		ymin = -19.5, ymax = -2.2,
		title={Bias decay $\log_2|\E P_n-\E P_{n-1}|$},
		legend style={at={(0.4,.05)},anchor=south east},
		]
		\addplot[
		solid,
		mark=+,
		color=black,
		]
		coordinates {(1,1.2025)(2,-2.216)(3,-2.862)(4,-3.681)(5,-4.597)(6,-5.514)(7,-6.479)(8,-7.399)(9,-8.408)(10,-9.387)(11,-10.35)(12,-11.27)(13,-12.15)(14,-13.26)(15,-14.30)(16,-15.31)(17,-15.32)(18,-16.88)(19,-17.90)(20,-19.50)
		};
		\addlegendentry{\scriptsize Observed}
		\addplot[
		dashed,
		mark=+,
		color=black,
		]
		coordinates {(1,-1.716)(2,-2.216)(3,-2.716)(4,-3.216)(5,-3.716)(6,-4.216)(7,-4.716)(8,-5.216)(9,-5.716)(10,-6.216)(11,-6.716)(12,-7.216)(13,-7.716)(14,-8.216)(15,-8.716)(16,-9.216)(17,-9.716)(18,-10.21)(19,-10.71)(20,-11.21)
		};
		\addlegendentry{\scriptsize Bound}
		\end{axis}
	\end{tikzpicture}
	\begin{tikzpicture}
		\pgfplotsset{
			scale only axis,
			xmin=2, xmax=5
		} 
		\begin{axis}[
		width=6.75cm,
		height=3cm,
		axis on top=true,
		axis x line=bottom,
		axis y line*=left,
		xlabel = {$n$},
		xmin = 2, xmax = 20,
		ymin = -15.75, ymax = 1.2,
		title={ Variance decay $\log_2\V[P_n-P_{n-1}]$},
		legend style={at={(0.4,.05)},anchor=south east},
		]
		\addplot[
		solid,
		mark=+,
		color=black,
		]
		coordinates {(1,4.0510)(2,1.2156)(3,0.5843)(4,-0.225)(5,-1.137)(6,-2.054)(7,-3.011)(8,-3.925)(9,-4.926)(10,-5.897)(11,-6.866)(12,-7.774)(13,-8.656)(14,-9.774)(15,-10.86)(16,-11.80)(17,-11.81)(18,-13.48)(19,-14.73)(20,-15.75)
		};
		\addlegendentry{\scriptsize Observed}
		\addplot[
		dashed,
		mark=+,
		color=black,
		]
		coordinates {(1,1.7156)(2,1.2156)(3,0.7156)(4,0.2156)(5,-0.284)(6,-0.784)(7,-1.284)(8,-1.784)(9,-2.284)(10,-2.784)(11,-3.284)(12,-3.784)(13,-4.284)(14,-4.784)(15,-5.284)(16,-5.784)(17,-6.284)(18,-6.784)(19,-7.284)(20,-7.784)
		};
		\addlegendentry{\scriptsize Bound}
		\end{axis}
	\end{tikzpicture}
	\begin{tikzpicture}
		\pgfplotsset{
			scale only axis,
			xmin=2, xmax=5
		} 
		\begin{axis}[
		width=6.75cm,
		height=3cm,
		axis on top=true,
		axis x line=bottom,
		axis y line*=left,
		xlabel = {$k$},
		ylabel = {$\log_2 N_k$},
		xmin = 1, xmax = 18,
		ymin = 13, ymax = 30,
		title={ Number of levels and samples per level},
		legend style={at={(1,.6)},anchor=south east},
		]
		\addplot[
		solid,
		mark=*,
		color=black,
		]
		coordinates {(1,20.977)(2,19.042)(3,18.415)(4,17.818)(5,17.080)(6,16.548)(7,15.889)(8,15.788)(9,14.958)(10,13.287)
		};
		\addlegendentry{\scriptsize $\epsilon=2^{-7}$}
		\addplot[
		solid,
		mark=o,
		color=black,
		]
		coordinates {(1,25.040)(2,23.081)(3,22.560)(4,21.920)(5,21.218)(6,20.737)(7,20.089)(8,19.413)(9,19.276)(10,18.537)(11,17.607)(12,17.270)(13,16.948)(14,16.609)
		};
		\addlegendentry{\scriptsize $\epsilon=2^{-9}$}
		\addplot[
		dashed,
		mark=+,
		color=black,
		]
		coordinates {(1,29.068)(2,27.154)(3,26.548)(4,25.931)(5,25.397)(6,24.792)(7,24.196)(8,23.545)(9,22.832)(10,22.539)(11,22.099)(12,20.946)(13,20.713)(14,20.151)(15,16.752)(16,16.254)
		};
		\addlegendentry{\scriptsize $\epsilon=2^{-11}$}
		\end{axis}
	\end{tikzpicture}
	\begin{tikzpicture} 
		\pgfplotsset{
			scale only axis,
			xmin=5, xmax=13
		}
		\begin{axis}[
		width=6.75cm,
		height=3cm,
		axis on top=true,
		axis x line=bottom,	
		axis y line*=left,
		xlabel={$\log_2(1/\epsilon)$},
		ymin=10, ymax=40,
		title={Logarithm of the computational cost},
		legend style={at={(1,.01)},anchor=south east},
		]
		\addplot[
		solid,
		mark=+,
		mark options={scale=.8},
		color=black,
		]
		coordinates {(5.0,23.815)(6.0,25.569)(7.0,27.114)(8.0,29.138)(9.0,31.115)(10.0,33.423)(11.0,35.430)(12.0,37.544)(13.0,39.761)
		}; 
		\addlegendentry{\scriptsize Observed (MLMC)}
		\addplot[
		dashed,
		mark=+,
		mark options={scale=.8},
		color=black,
		]
		coordinates {(5.0,23.545)(6.0,25.545)(7.0,27.545)(8.0,29.545)(9.0,31.545)(10.,33.545)(11.,35.545)(12.,37.545)(13.,39.545)
		}; 
		\addlegendentry{\scriptsize Predicted (MLMC)}
		\addplot[
		solid,
		mark=*,
		mark options={scale=.8},
		color=black,
		]
		coordinates {(5.0,21.315)(6.0,23.455)(7.0,25.619)(8.0,27.672)(9.0,29.741)(10.0,31.900)(11.0,34.040)(12.0,36.214)(13.0,38.211)
		};
		\addlegendentry{\scriptsize Observed (MC)}
		\addplot[
		dashed,
		mark=*,
		mark options={scale=.8},
		color=black,
		]
		coordinates {(5.0,21.315)(6.0,23.495)(7.0,25.656)(8.0,27.800)(9.0,29.931)(10.,32.052)(11.,34.163)(12.,36.266)(13.,38.362)
		};
		\addlegendentry{\scriptsize Predicted (MC)}
		\end{axis}
	\end{tikzpicture}
\caption{\label{fig:NIG-MLMC}
The pictures show the level bias decay, level variance decay, samples per level 
and complexities of MC and MLMC implementations for the up-and-out call 
$g(\chi)= e^{-rT}(S_T-K)^+\1{\ov{S}_T<M}$ and the NIG process. 
NIG parameters: 
$\sigma=0.1836$, $\theta=-0.1313$, $\kappa=1.2819$ and $b=0.1571$ 
(see~\cite[Sec.~3]{MR3723380} and the reference therein). Option 
parameters: $S_0=100$, $K_0=100$, $M=115$, $T=1$ and $r=0.05$. 
The bounds in the top two graphs are based on Proposition~\ref{prop:barrier} 
(with $\gamma=q=1$) and synchronous coupling. See 
Subsection~\ref{subsec:Complexity_SBA} for the computational complexity 
of MC and MLMC in the bottom right.}
\end{figure}

The computational complexity of MLMC in Figure~\ref{fig:NIG-MLMC} is greater 
than that of the MC (for $\epsilon>1/8000$) due to the size of the leading 
constant. Overall, the performance of both MC and MLMC in this examples is 
good, with the actual decay rates of the bias and level variances being better 
than the theoretical bounds by a factor of $2$.

\section{Proofs and technical results\label{sec:proofs}}

Let $X=(X_t)_{t\geq0}$ be a L\'evy process, which we assume not to be 
compound Poisson with drift. By Doeblin's diffuseness 
lemma~\cite[Lem.~13.22]{MR1876169}, this is equivalent to the following 
requirement, which we assume throughout the remainder of the paper. 
\begin{asm}
\label{asm:D} 
$\P(X_t=x)=0$ for all $x\in\R$ and for some (and hence all) $t>0$.
\end{asm}

\subsection{The concave majorant of $X$ and its coupling with $(\ell,Y)$}
\label{subsec:coupling}
Given a countable set $\mS$ and a function $\phi:\mS\to(0,\infty)$ such that 
$\sum_{s\in\mS}\phi(s)<\infty$, \emph{size-biased sampling} of $\mS$ based 
on the function $\phi$ produces a random enumeration $(s_n)_{n\in\N}$ of 
$\mS$ using the following sequential construction: let $Z_0=\emptyset$ and 
assume we have already sampled the points in $Z_{n-1}=\{s_1,\ldots,s_{n-1}\}$ 
for some $n\in\N$; then, conditional on $Z_{n-1}$, the  random element $s_n$ 
in $\mS\setminus Z_{n-1}$ follows the law $\P(s_n=s|Z_{n-1})=\phi(s)/\sum_{s'\in\mS\setminus Z_{n-1}}\phi(s')$, $s\in\mS\setminus Z_{n-1}$.

The concave majorant of a path of $(X_t)_{t\in[0,T]}$ is the point-wise smallest 
concave function $C:[0,T]\to\R$ satisfying $C_t\geq X_t$ for all $t\in[0,T]$. 
Since $X$ is not compound Poisson with drift, it is possible to obtain a 
complete description of the law of $C$ (see~\cite{MR2978134} for details), 
which we now recall. Note that $t\mapsto C_t$ is  a piecewise linear function 
comprising of infinitely many line segments known as \textit{faces}. 
Each face has a positive length and a height, which is a real number. If the 
faces are ordered chronologically (i.e. as they arise with increasing $t$), 
the concavity of $C$ implies that the sequence of the corresponding slopes is 
strictly decreasing (see Figure~\ref{fig:FacesCM}(a) below). The lengths of the 
faces constitute a countable set of positive numbers with a finite sum clearly 
equal to $T$. 

We may thus order randomly the faces of $C$ using size-biased 
sampling on lengths, see Figure~\ref{fig:FacesCM} below. 
This random ordering almost surely differs from the chronological one, with 
longer faces much more likely to appear near the beginning of the sequence. 
For any $n\in\N=\{1,2,\ldots\}$, let $g_n$ (resp. $d_n$) be the left (resp. right) 
end point of the $n$-th face of $C$ in the size-biased enumeration. 
The size-biased sequence of lengths and heights of the faces of $C$ satisfies 
the following equality in law~\cite[Thm~1]{MR2978134}: 
\begin{equation}
\label{eq:levy-minorant}
	((d_n-g_n,C_{d_n}-C_{g_n}))_{n\in\N}\overset{d}{=}
	((\ell_n,Y_{L_{n-1}}-Y_{L_n}))_{n\in\N},
\end{equation}
where $Y$ is a copy of $X$, independent of the stick-breaking 
process $\ell=(\ell_n)_{n\in\N}$ on $[0,T]$ based on the uniform law $\U(0,1)$. 
We stress that the equality in law~\eqref{eq:levy-minorant} holds in the sense 
of random processes indexed by $\N$. Surprisingly, 
by~\eqref{eq:levy-minorant}, the law of the sequence of lengths 
$(d_n-g_n)_{n\in\N}$ does not depend on $X$. This fact is the basis for a 
coupling of $(\ell,Y)$ and $X$ such that~\eqref{eq:levy-minorant} holds a.s. 

This coupling, constructed below, is crucial for the analysis of the error in 
the~\nameref{alg:SBA} above and will be used throughout the paper. 
Indeed, under such a coupling,~\eqref{eq:chi} holds a.s. since the location 
(resp. time) of the supremum of $X$ over $[0,T]$ equals the sum of all the 
heights (resp. lengths) of the faces of $C$ with positive slope. (Note that 
the function $t\mapsto C_t$ is concave and thus of finite variation, 
making the sequence of heights $(C_{d_n}-C_{g_n})_{n\in\N}=
(Y_{L_{n-1}}-Y_{L_n})_{n\in\N}$ absolutely summable.) In particular, 
it implies $Y_T=X_T$ a.s. 

Consider the countable set of faces of the concave majorant $C$ of $X$. 
Each face consists of a pair $(x,y)$, where $x>0$ is the length and $y\in\R$ is 
the height of the face. Since the lengths of the faces are positive and summable 
with sum $T$, it is possible to perform size-biased sampling of the faces based 
on the function $\phi:(x,y)\mapsto x$, which then yields the random 
enumeration $((d_n-g_n,C_{d_n}-C_{g_n}))_{n\in\N}$ of the faces of $C$. 
This enumeration, by~\cite[Thm~1]{MR2978134}, satisfies the distributional 
equality~\eqref{eq:levy-minorant}. Furthermore, in this case, the size-biased 
sampling has a geometric interpretation as illustrated by 
Figure~\ref{fig:FacesCM} below, wherein $(g_n)_{n\in\N}$ and $(d_n)_{n\in\N}$ 
are the left and right endpoints of the $n$-th face, respectively. Note that 
Assumption~\ref{asm:D} and~\eqref{eq:levy-minorant} imply that there is 
no face of $C$ that is horizontal. Hence the time at which the supremum is 
attained is a.s. unique.

\begin{figure}[ht]
	\begin{tikzpicture}
		\pgfmathsetseed{101101}
		\BrownianMotion{0}{0}{100}{0.003}{0.2*.7}{black}{};
		\BrownianMotion{101*.00285}{2.5*.7}{123}{0.003}{0.2*.7}{black}{};
		\BrownianMotion{225*.00285}{2.9*.7}{54}{0.003}{0.2*.7}{black}{};
		\BrownianMotion{280*.00285}{2.4*.7}{172}{0.003}{0.2*.7}{black}{};
		\BrownianMotion{453*.00285}{0.8*.7}{27}{0.003}{0.2*.7}{black}{};
		\BrownianMotion{481*.00285}{2.0*.7}{80}{0.003}{0.2*.7}{black}{};
		\BrownianMotion{562*.00285}{1.3*.7}{69}{0.003}{0.2*.7}{black}{};
		\BrownianMotion{632*.00285}{4.6*.7}{126}{0.003}{0.2*.7}{black}{};
		\BrownianMotion{759*.00285}{0.4*.7}{208}{0.003}{0.2*.7}{black}{};
		\BrownianMotion{968*.00285}{0.8*.7}{67}{0.003}{0.2*.7}{black}{};
		\BrownianMotion{1036*.00285}{1.8*.7}{132}{0.003}{0.2*.7}{black}{};
		\BrownianMotion{1169*.00285}{0.1*.7}{164}{0.003}{0.2*.7}{black}{};
		\BrownianMotion{1324*.00285}{1.0*.7}{45}{0.003}{0.2*.7}{black}{};
		\draw[red] (0,0) -- (6*.00285,.455*.7) 
		-- (18*.00285,.95*.7)
		-- (180*.00285,4.6*.7)
		-- (199*.00285,4.96*.7)
		-- (635*.00285,5.28*.7)
		-- (668*.00285,5.31*.7)
		-- (745*.00285,4.99*.7)
		-- (1148*.00285,3.0*.7)
		-- (1275*.00285,2.08*.7)
		-- (1337*.00285,1.4*.7)
		-- (1362*.00285,.89*.7)
		-- (1369*.00285,.68*.7)
		node[right] { $C$};
		\node[circle, fill=black, scale=0.5] at (933*.00285,188*3*.7/403+215*5.03*.7/403) {}{};
		\node[circle, fill=black, scale=0.5, label=above right:{ $U_1$}] at (933*.00285,0) {}{};
		\draw[dashed] (933*.00285,0) -- (933*.00285,188*3*.7/403+215*5.03*.7/403);
		\node[circle, fill=blue, scale=0.5] at (745*.00285,5.03*.7) {}{};
		\node[circle, fill=blue, scale=0.5, label=below: {\color{blue} $g_1$}] at (745*.00285,0) {}{};
		\node[circle, fill=blue, scale=0.5, label=left: {\color{blue} $C_{g_1}$}] at (0,5.03*.7) {}{};
		\draw[blue,dashed] (745*.00285,0) -- (745*.00285,5.03*.7);
		\draw[blue,dashed] (0,5.03*.7) -- (745*.00285,5.03*.7);
		\node[circle, fill=blue, scale=0.5] at (1148*.00285,3*.7) {}{};
		\node[circle, fill=blue, scale=0.5, label=below: {\color{blue} $d_1$}] at (1148*.00285,0) {}{};
		\node[circle, fill=blue, scale=0.5, label=left: {\color{blue} $C_{d_1}$}] at (0,3*.7) {}{};
		\draw[blue,dashed] (1148*.00285,0) -- (1148*.00285,3*.7);
		\draw[blue,dashed] (0,3*.7) -- (1148*.00285,3*.7);
		\draw [thin, draw=gray, ->] (0,0) -- (1400*.00285,0);
		\draw [thin, draw=gray, ->] (0,0) -- (0,5.5*.7);
		\node[circle, fill=black, scale=0.5, label=below: $T$] at (1369*.00285,0) {}{};
		\draw[very thick, draw=blue] (0,0) -- (1369*.00285,0);
	\end{tikzpicture}
	\begin{tikzpicture}
		\pgfmathsetseed{101101}
		\BrownianMotion{0}{0}{100}{0.003}{0.2*.7}{black}{};
		\BrownianMotion{101*.00285}{2.5*.7}{123}{0.003}{0.2*.7}{black}{};
		\BrownianMotion{225*.00285}{2.9*.7}{54}{0.003}{0.2*.7}{black}{};
		\BrownianMotion{280*.00285}{2.4*.7}{172}{0.003}{0.2*.7}{black}{};
		\BrownianMotion{453*.00285}{0.8*.7}{27}{0.003}{0.2*.7}{black}{};
		\BrownianMotion{481*.00285}{2.0*.7}{80}{0.003}{0.2*.7}{black}{};
		\BrownianMotion{562*.00285}{1.3*.7}{69}{0.003}{0.2*.7}{black}{};
		\BrownianMotion{632*.00285}{4.6*.7}{113}{0.003}{0.2*.7}{black}{};
		\phantom{
			\BrownianMotion{632*.00285}{4.6*.7}{13}{0.003}{0.2*.7}{black}{};
			\BrownianMotion{759*.00285}{0.4*.7}{208}{0.003}{0.2*.7}{black}{};
			\BrownianMotion{968*.00285}{0.8*.7}{67}{0.003}{0.2*.7}{black}{};
			\BrownianMotion{1036*.00285}{1.8*.7}{113}{0.003}{0.2*.7}{black}{};
		}
		\BrownianMotion{1149*.00285}{3.0*.7}{19}{0.003}{0.2*.7}{black}{};
		\BrownianMotion{1169*.00285}{0.1*.7}{164}{0.003}{0.2*.7}{black}{};
		\BrownianMotion{1324*.00285}{1.0*.7}{45}{0.003}{0.2*.7}{black}{};
		\draw[red] (0,0) -- (6*.00285,.455*.7) 
		-- (18*.00285,.95*.7)
		-- (180*.00285,4.6*.7)
		-- (199*.00285,4.96*.7)
		-- (635*.00285,5.28*.7)
		-- (668*.00285,5.31*.7)
		--(745*.00285,4.99*.7){};
		\draw[red] (1148*.00285,3.0*.7)
		-- (1275*.00285,2.08*.7)
		-- (1337*.00285,1.4*.7)
		-- (1362*.00285,.89*.7)
		-- (1369*.00285,.68*.7)
		node[right] { $C$};
		\node[circle, fill=black, scale=0.5] at (433*.00285,202*5.33*.7/436+234*5.02*.7/436) {}{};
		\node[circle, fill=black, scale=0.5, label=below:{ $U_2$}] at (433*.00285,0) {}{};
		\draw[dashed] (433*.00285,0) -- (433*.00285,202*5.33*.7/436+234*5.02*.7/436);
		\node[circle, fill=blue, scale=0.5] at (199*.00285,5.02*.7) {}{};
		\node[circle, fill=blue, scale=0.5, label=below: {\color{blue} $g_2$}] at (199*.00285,0) {}{};
		\node[circle, fill=blue, scale=0.5, label=below left: {\color{blue} $C_{g_2}$}] at (0,5.02*.7) {}{};
		\draw[blue,dashed] (199*.00285,0) -- (199*.00285,5.02*.7);
		\draw[blue,dashed] (0,5.02*.7) -- (199*.00285,5.02*.7);
		\node[circle, fill=blue, scale=0.5] at (635*.00285,5.33*.7) {}{};
		\node[circle, fill=blue, scale=0.5, label=below: {\color{blue} $d_2$}] at (635*.00285,0) {}{};
		\node[circle, fill=blue, scale=0.5, label=left: {\color{blue} $C_{d_2}$}] at (0,5.33*.7) {}{};
		\draw[blue,dashed] (635*.00285,0) -- (635*.00285,5.33*.7);
		\draw[blue,dashed] (0,5.33*.7) -- (635*.00285,5.33*.7);
		\draw [thin, draw=gray, ->] (0,0) -- (1400*.00285,0);
		\draw [thin, draw=gray, ->] (0,0) -- (0,5.5*.7);
		\draw [very thick, draw=blue] (0,0) -- (745*.00285,0);
		\draw [very thick, draw=blue] (1148*.00285,0) -- (1369*.00285,0);
		\node[circle, fill=black, scale=0.5, label=below: $T$] at (1369*.00285,0) {}{};
	\end{tikzpicture}
	\begin{tikzpicture}
		\pgfmathsetseed{101101}
		\BrownianMotion{0}{0}{100}{0.003}{0.2*.7}{black}{};
		\BrownianMotion{101*.00285}{2.5*.7}{98}{0.003}{0.2*.7}{black}{};
		\phantom{
			\BrownianMotion{101*.00285}{2.5*.7}{25}{0.003}{0.2*.7}{black}{};
			\BrownianMotion{225*.00285}{2.9*.7}{54}{0.003}{0.2*.7}{black}{};
			\BrownianMotion{280*.00285}{2.4*.7}{172}{0.003}{0.2*.7}{black}{};
			\BrownianMotion{453*.00285}{0.8*.7}{27}{0.003}{0.2*.7}{black}{};
			\BrownianMotion{481*.00285}{2.0*.7}{80}{0.003}{0.2*.7}{black}{};
			\BrownianMotion{562*.00285}{1.3*.7}{69}{0.003}{0.2*.7}{black}{};
			\BrownianMotion{632*.00285}{4.6*.7}{2}{0.003}{0.2*.7}{black}{};
		}
		\BrownianMotion{635*.00285}{4.95*.7}{111}{0.003}{0.2*.7}{black}{};
		\phantom{
			\BrownianMotion{632*.00285}{4.6*.7}{13}{0.003}{0.2*.7}{black}{};
			\BrownianMotion{759*.00285}{0.4*.7}{208}{0.003}{0.2*.7}{black}{};
			\BrownianMotion{968*.00285}{0.8*.7}{67}{0.003}{0.2*.7}{black}{};
			\BrownianMotion{1036*.00285}{1.8*.7}{113}{0.003}{0.2*.7}{black}{};
		}
		\BrownianMotion{1149*.00285}{3.0*.7}{19}{0.003}{0.2*.7}{black}{};
		\BrownianMotion{1169*.00285}{0.1*.7}{164}{0.003}{0.2*.7}{black}{};
		\BrownianMotion{1324*.00285}{1.0*.7}{45}{0.003}{0.2*.7}{black}{};
		\draw[red] (0,0) -- (6*.00285,.455*.7) 
		-- (18*.00285,.95*.7)
		-- (180*.00285,4.6*.7)
		-- (199*.00285,4.96*.7){};
		\draw[red] (635*.00285,5.28*.7)
		-- (668*.00285,5.31*.7)
		--(745*.00285,4.99*.7){};
		\draw[red] (1148*.00285,3.0*.7)
		-- (1275*.00285,2.08*.7)
		-- (1337*.00285,1.4*.7)
		-- (1362*.00285,.89*.7)
		-- (1369*.00285,.68*.7)
		node[right] { $C$};
		\node[circle, fill=black, scale=0.5] at (150*.00285,30*.95*.7/162+132*4.6*.7/162) {}{};
		\node[circle, fill=black, scale=0.5, label=below:{ $U_3$}] at (150*.00285,0) {}{};
		\draw[dashed] (150*.00285,0) -- (150*.00285,30*.95*.7/162+132*4.6*.7/162);
		\node[circle, fill=blue, scale=0.5] at (18*.00285,.95*.7) {}{};
		\node[circle, fill=blue, scale=0.5, label=below:{\color{blue} $g_3$}] at (18*.00285,0) {}{};
		\node[circle, fill=blue, scale=0.5, label=above left:{\color{blue} $C_{g_3}$}] at (0,.95*.7) {}{};
		\draw[blue,dashed] (18*.00285,0) -- (18*.00285,.95*.7);
		\draw[blue,dashed] (0,.95) -- (18*.00285,.95*.7);
		\node[circle, fill=blue, scale=0.5] at (180*.00285,4.6*.7) {}{};
		\node[circle, fill=blue, scale=0.5, label=below right:{\color{blue}  $d_3$}] at (180*.00285,0) {}{};
		\node[circle, fill=blue, scale=0.5, label=below left:{\color{blue} $C_{d_3}$}] at (0,4.6*.7) {}{};
		\draw[blue,dashed] (180*.00285,0) -- (180*.00285,4.6*.7);
		\draw[blue,dashed] (0,4.6*.7) -- (180*.00285,4.6*.7);
		\draw [thin, draw=gray, ->] (0,0) -- (1400*.00285,0);
		\draw [thin, draw=gray, ->] (0,0) -- (0,5.5*.7);
		\draw [very thick, draw=blue] (0,0) -- (199*.00285,0);
		\draw [very thick, draw=blue] (635*.00285,0) -- (745*.00285,0);
		\draw [very thick, draw=blue] (1148*.00285,0) -- (1369*.00285,0);
		\node[circle, fill=black, scale=0.5, label=below: $T$] at (1369*.00285,0) {}{};
	\end{tikzpicture}
\caption{\label{fig:FacesCM} 
Selecting the first three faces of the concave majorant: the total length of 
the thick blue segment(s) on the abscissa equal the stick sizes $T$, 
$T-(d_1-g_1)$ and $T-(d_1-g_1)-(d_2-g_2)$, respectively. 
The independent random variables $U_1,U_2,U_3$ are uniform on the sets 
$[0,T]$, $[0,T]\setminus(g_1,d_1)$, $[0,T]\setminus\bigcup_{i=1}^2(g_i,d_i)$, 
respectively. Note that the residual length of unsampled faces after $n$ 
samples is $L_n$.}
\end{figure}

We now explain how to couple $(\ell, Y)$ with $X$ in such a way 
that~\eqref{eq:levy-minorant} (and hence~\eqref{eq:chi}) holds a.s. 
Start by recalling from~\eqref{eq:levy-minorant} that 
$\big(\big(d_n-g_n,C_{d_n}-C_{g_n}\big)\big)_{n\in\N}\overset{d}{=}
\big(\big(\ell'_n,Y'_{L'_{n-1}}-Y'_{L'_n}\big)\big)_{n\in\N}$, where $Y'$ is a 
copy of $X$, independent of the stick-breaking process 
$\ell'=(\ell'_n)_{n\in\N}$ on $[0,T]$ based on the uniform law $\U(0,1)$ and 
$L'_{n-1}=\sum_{k=n}^\infty\ell'_k$. Now recall that the Skorokhod space 
$\mathcal{D}[0,T]$ of right-continuous functions on $[0,T]$ with left-hand 
limits (see~\cite[p.~109]{MR1700749}) is a Polish 
space~\cite[p.~112]{MR1700749} and thus a Borel 
space~\cite[Thm~A1.2]{MR1876169}. By possibly extending the 
original probability space,~\cite[Thm~6.10]{MR1876169} asserts the 
existence of a random element $Y$ in $\mathcal{D}[0,T]$ such that 
\begin{equation}
\label{eq:levy-minorant-2}
\big((d_n-g_n)_{n\in\N},\big(C_{d_n}-C_{g_n}\big)_{n\in\N},Y\big)
\overset{d}{=}
\big((\ell'_n)_{n\in\N},\big(Y'_{L'_{n-1}}-Y'_{L'_n}\big)_{n\in\N},Y'\big).
\end{equation}
Consequently, the process $Y$ has the same law as $Y'\overset{d}{=}X$. 
If we define the sequence $\ell=(\ell_n)_{n\in\N}$ through $\ell_n=d_n-g_n$ 
and $L_{n-1}=\sum_{k=n}^\infty \ell_k$ for each $n\in\N$, 
then~\eqref{eq:levy-minorant-2} implies that $Y$ is independent of $\ell$. 
Again, by~\eqref{eq:levy-minorant-2}, the increment of $Y$ over the interval 
$[L_n,L_{n-1}]$ is equal to $Y_{L_{n-1}}-Y_{L_n}=C_{d_n}-C_{g_n}$ a.s. 
Thus, this coupling between $(\ell,Y)$ and $X$ is the desired one, 
as~\eqref{eq:levy-minorant} holds a.s.

The coupling $(X,\ell,Y)$ can also be obtained without the abstract 
result~\citep[Thm~6.10]{MR1876169} using the `3214' 
transformation from~\cite{MR2978134}, which is explicit in the 
trajectory of $X$. Since the details of the coupling are not important 
in this paper, we use the abstract result for brevity.

\subsection{The law of the error and the proof of Theorem~\ref{thm:error}\label{subsec:Proof_of_Thm1}}

In the present subsection we will prove Theorem~\ref{thm:error}. We also 
state and prove Proposition~\ref{prop:E_tau}, which explains why the error 
$\delta_n^\SB$ of the SBA $\chi_n$ is typically smaller than $\delta_n$.

\begin{proof}[Proof of Theorem~\ref{thm:error}]\label{proof:Proof_of_Thm1}
By the coupling from Subsection~\ref{subsec:coupling}, 
the equality in~\eqref{eq:chi} holds a.s., i.e. we have 
$\chi=(X_T,\ov X_T,\tau_T)
=\sum_{k=1}^\infty(Y_{L_{k-1}}-Y_{L_k},(Y_{L_{k-1}}-Y_{L_k})^+,
\ell_k\cdot\1{Y_{L_{k-1}}-Y_{L_k}>0})$. Hence, from 
the definition in~\eqref{def:Delta}, we clearly obtain 
\[(Y_{L_n},\Delta_n,\delta_n)=\sum_{k=n+1}^\infty
\big(Y_{L_{k-1}}-Y_{L_k},(Y_{L_{k-1}}-Y_{L_k})^+,\ell_k\cdot 
\1{Y_{L_{k-1}}-Y_{L_k}>0}\big).\]
In particular, we have $\delta_n\leq\sum_{k=n+1}^\infty \ell_k=L_n$ and thus 
$|\delta_n^\SB|\leq L_n$. 

We now apply~\eqref{eq:levy-minorant} to conclude that the tail sum in the 
display above has the required law. Note first that, given $L_n$, 
$(\ell_{n+k})_{k\in\N}$ is a stick-breaking process on the interval $[0,L_n]$. 
Thus, since $Y$ and $\ell$ are independent, the law of the sequence 
$((\ell_{n+k},Y_{L_{k+n-1}}-Y_{L_{k+n}}))_{k\in\N}$, given $L_n$, 
is the same law as that of the right-hand side of~\eqref{eq:levy-minorant} 
applied to the interval $[0,L_n]$. Put differently, by~\eqref{eq:levy-minorant}, 
this sequence has the same law as the sequence of the faces of the concave 
majorant of the L\'evy process $Y$ over the interval $[0,L_n]$ in size-biased 
order. Hence, identity~\eqref{eq:chi} applied to the interval $[0,L_n]$ 
(instead of $[0,T]$), together with the independence of $Y$ and $\ell$,
yields the first equality in law in~\eqref{eq:error-law}:
\begin{equation*} 
(Y_{L_n},\ov Y_{L_n},\tau_{L_n}(Y))\overset{d}{=} 
\sum_{k=n+1}^{\infty}\big(Y_{L_{k-1}}-Y_{L_k},(Y_{L_{k-1}}-Y_{L_k})^+, 
\ell_k\cdot \1{Y_{L_{k-1}}-Y_{L_k}>0}\big). 
\end{equation*}
The second distributional 
identity in~\eqref{eq:error-law} follows from the definition of 
$(\Delta_n^\SB,\delta_n^\SB)$ as a measurable transformation of 
$(Y_{L_n},\Delta_n,\delta_n)$. 

For any $n\in\N$, the second identity in~\eqref{eq:error-law} implies
$0\leq \Delta_n^\SB$. The definition of $\Delta_n$ in~\eqref{def:Delta}
and the inequality $Y_{L_n}^+\leq  (Y_{L_n}-Y_{L_{n+1}})^+ + Y_{L_{n+1}}^+$ 
yield the following: 
\[
\Delta_{n+1}^\SB
=\Delta_{n+1}-Y_{L_{n+1}}^+
= \Delta_n-(Y_{L_n}-Y_{L_{n+1}})^+ -Y_{L_{n+1}}^+ 
\leq \Delta_n-Y_{L_n}^+=\Delta_n^\SB\leq \Delta_n.
\] 
This concludes the proof of the theorem. 
\end{proof}

\begin{prop}\label{prop:E_tau}
Let $X$ satisfy Assumption~\ref{asm:D}. 
Then the following statements hold.
\item[{\normalfont(a)}] 
For any $t>0$, we have $\E\tau_t(X)=\int_0^t \P(X_s>0)ds$.
\item[{\normalfont(b)}] If $t^{-1}\int_0^t \P(X_s>0)ds-\P(X_t>0)\to 0$ as 
$t\searrow 0$, then $\E[\delta_n^\SB/L_n]\to 0$ as $n\to\infty$.
\item[{\normalfont(c)}] If $\P(X_t>0)\to\rho_0\in[0,1]$ as $t\searrow 0$, 
then (b) holds and $\E[\delta_n/L_n]\to\rho_0$ as $n\to\infty$.
\item[{\normalfont(d)}] If $\P(X_t>0)=\rho_0\in[0,1]$ for all $t\in(0,T]$, 
then $\E[\delta_n^\SB|L_n]=\E[\delta_n|L_n]-L_n\rho_0=0$ a.s.
\end{prop}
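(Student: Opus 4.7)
My plan is to establish part (a) first and then deduce (b), (c), (d) as short consequences via the conditional error law supplied by Theorem~\ref{thm:error}.

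For (a), I would work from the size-biased representation~\eqref{eq:chi} specialised to the interval $[0,t]$, namely $\tau_t(X)=\sum_{k=1}^\infty \ell_k\cdot 1_{Y_{L_{k-1}}-Y_{L_k}>0}$, using that $Y$ is independent of the stick-breaking sequence $\ell$ with $Y_{L_{k-1}}-Y_{L_k}\mid\ell \sim X_{\ell_k}$. Setting $\rho(s):=\P(X_s>0)$ and applying Fubini yields $\E\tau_t(X)=\sum_{k\geq 1}\E[\ell_k\rho(\ell_k)]$. Writing $\ell_k=V_kL_{k-1}$ with $V_k\sim U(0,1)$ independent of $L_{k-1}$ and changing variables gives $\E[\ell_k\rho(\ell_k)\mid L_{k-1}]=L_{k-1}^{-1}\int_0^{L_{k-1}}u\rho(u)\,du$. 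Since $\{-\log(L_k/t)\}_{k\geq 0}$ is a unit-rate Poisson process on $[0,\infty)$, summing over $k$ and applying Fubini a second time telescopes the series to $\int_0^t \rho(s)\,ds$, as required. A shorter alternative is to invoke the Sparre Andersen arcsine identity for L\'evy processes (which under~(\nameref{asm:(D)}) asserts $\tau_t(X) \overset{d}{=} \int_0^t 1_{X_s>0}\,ds$) and take expectations, or to differentiate Spitzer's identity at $q=0$.

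For the remaining parts, I would use Theorem~\ref{thm:error}: conditionally on $L_n$, with $Y$ an independent copy of $X$,
\[
\E[\delta_n\mid L_n]=\E[\tau_{L_n}(Y)\mid L_n]=\int_0^{L_n}\rho(s)\,ds, \qquad
\E[\delta_n^{\SB}\mid L_n]=\int_0^{L_n}\rho(s)\,ds-L_n\rho(L_n),
\]
where the first line applies (a) to $Y$ on $[0,L_n]$ and the second uses $\P(Y_{L_n}>0\mid L_n)=\rho(L_n)$. Statement (d) is immediate upon substituting $\rho\equiv\rho_0$. For (b), dividing by $L_n$ yields $\E[\delta_n^{\SB}/L_n\mid L_n]=L_n^{-1}\int_0^{L_n}\rho(s)\,ds-\rho(L_n)$, which tends to $0$ a.s.\ by the stated hypothesis together with $L_n\searrow 0$ a.s.\ (the latter following from $\E L_n = T 2^{-n}$ and the monotonicity $L_n\leq L_{n-1}$); the a.s.\ bound $|\delta_n^{\SB}|\leq L_n$ from Theorem~\ref{thm:error} furnishes a unit envelope, so dominated convergence gives $\E[\delta_n^{\SB}/L_n]\to 0$. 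For (c), $\rho(t)\to\rho_0$ as $t\to 0^+$ implies both $\rho(L_n)\to\rho_0$ and $L_n^{-1}\int_0^{L_n}\rho(s)\,ds\to\rho_0$ by a Ces\`aro-type limit, so the hypothesis of (b) is met; the same dominated convergence applied to $\E[\delta_n/L_n\mid L_n]=L_n^{-1}\int_0^{L_n}\rho(s)\,ds$ delivers $\E[\delta_n/L_n]\to\rho_0$.

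The only genuine obstacle is (a): the stick-breaking/Poisson-process derivation is self-contained but involves a somewhat intricate series manipulation, whereas a citation of Sparre Andersen or Spitzer collapses it to a single line. Once (a) is secured, parts (b)--(d) are clean applications of Theorem~\ref{thm:error} together with dominated convergence.
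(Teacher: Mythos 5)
Your proposal is correct, and parts (b)--(d) coincide with the paper's own argument: you derive exactly the paper's conditional identity $\E[\delta_n^\SB\mid L_n]=\int_0^{L_n}\rho(s)\,ds-L_n\rho(L_n)$ from Theorem~\ref{thm:error} and part (a), then conclude via $L_n\searrow0$ a.s., the unit envelope and dominated convergence (with the Ces\`aro step for (c)). Where you genuinely diverge is part (a). The paper applies the error law~\eqref{eq:error-law} only with $n=1$ to get $\tau_t(X)\overset{d}{=}Ut\,1_{X_{tU}>0}+\tau_{t(1-U)}(Y)$, which yields the integral equation $\E\tau_t=t^{-1}\int_0^t\bigl(s\rho(s)+\E\tau_s\bigr)ds$; it then shows $t\mapsto\E\tau_t$ is absolutely continuous by a bootstrap (using continuity of $\rho$ on $(0,\infty)$) and identifies the derivative by multiplying through by $t$ and integrating by parts. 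You instead sum the full stick-breaking series from~\eqref{eq:chi}: Tonelli gives $\E\tau_t=\sum_k\E[\ell_k\rho(\ell_k)]$, conditioning on $L_{k-1}$ gives $g(L_{k-1})$ with $g(l)=l^{-1}\int_0^l u\rho(u)\,du$, and since $\{-\log(L_k/t)\}_{k\geq1}$ is a unit-rate Poisson process, Campbell's formula plus one more Fubini gives $g(t)+\int_0^t g(l)\,l^{-1}dl=\int_0^t\rho(u)\,du$; I checked this computation and it is sound, though the word ``telescopes'' hides the Campbell/Fubini step, which you should write out. Your route buys a self-contained calculation needing only measurability of $\rho$ and the nonnegativity of the summands, at the price of invoking the Poisson structure of the stick-breaking lengths; the paper's route uses only the $n=1$ identity but must then solve an integral equation, which is why it needs continuity of $\rho$ and the bootstrap. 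Your alternative one-liner via the Sparre Andersen/uniform-law identity $\tau_t(X)\overset{d}{=}\int_0^t 1_{X_s>0}\,ds$ is also legitimate under Assumption~(\nameref{asm:(D)}), and collapses (a) to Fubini, whereas the paper evidently prefers to re-derive the statement from its own concave-majorant machinery.
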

\begin{rem}\label{rem:E_tau}
(i) Note that $\tau_T\in[\tau_T-\delta_n,\tau_T-\delta_n+L_n]$ and, given
$L_n$, SBA $\chi_n$ chooses randomly the endpoints of the interval via a 
Bernoulli random variable with mean $\P(Y_{L_n}>0|L_n)$.\\
(ii) The assumption in (d) holds if e.g. $X$ is a subordinated stable or a 
symmetric L\'evy process. Moreover, it implies that the third coordinate in
$\chi_n$ is unbiased, since the expectation of its error vanishes: $\E[\delta^\SB_n]=0$. In contrast we have  
$\E[\delta_n]=\rho_0T/2^n$.\\
(iii) The bias of the third coordinate of $\chi_n$, conditional on $L_n=t$, 
equals $\int_0^t \P(X_s>0)ds-t\P(X_t>0)$ by~\eqref{eq:E_delta} below.  
This quantity is generally well behaved as $t\to0$. More specifically, 
we have $t^{-1}\int_0^t\P(X_s>0)ds-\P(X_t>0)\to 0$ as $t\searrow 0$ 
(thus satisfying the assumption in (b)) if $t\mapsto\P(X_t>0)$ is slowly 
varying at $0$~\cite[Prop.~1.5.8]{MR1015093}. \\
(iv) Note that the assumption in (c) implies that of (b). This assumption, 
known as Spitzer's condition~\cite[Thm~VI.3.14]{MR1406564}, is satisfied if 
for example $X$ converges weakly under the zooming-in 
procedure~\cite[Sec.~2.2]{ZoomIn}.
\end{rem}

\begin{proof}
Denote $\rho(t)=\P(X_t>0)$ for all $t>0$.\\
(a) Apply~\eqref{eq:error-law} to the interval $[0,t]$ with $n=1$, to get
$\tau_t(X)\overset{d}{=} Ut \1{X_{tU}>0} + \tau_{t(1-U)}(Y)$, where 
$U\sim \U(0,1)$ is independent of $Y$, which itself is a copy of $X$. Hence, 
\[\E\tau_t(X)
= t^{-1}\int_0^t (s\E \1{X_{s}>0} + \E\tau_{t-s}(Y))ds
= t^{-1}\int_0^t (s\rho(s)+\E\tau_{s}(X))ds,\]
where $\rho(s)=\P(X_s>0)$. Since $t\mapsto \tau_t$ is right-continuous and 
nondecreasing, so is $t\mapsto\E\tau_t$. The integral equation in the display 
above, the continuity of $\rho(t)$ for $t>0$ and a bootstrap argument imply 
that $t\mapsto\E\tau_t(X)$ is absolutely continuous with a derivative, say $h$. 
Put differently, we have $\E\tau_t(X)=\int_0^th(s)ds$ for all $t>0$.
Multiplying the equality in the display by $t$ and applying integration by 
parts yields $\int_0^t sh(s)ds=\int_0^ts\rho(s)ds$ for all $t>0$. Hence the 
integrands must agree a.e. with respect to the Lebesgue measure. In particular, 
$\E\tau_t=\int_0^th(s)ds=\int_0^t\rho(s)ds$ as desired.\\
(b) By Theorem~\ref{thm:error}, conditional on $L_n$, we have
$\delta_n^\SB\overset{d}{=}\tau_{L_n}(Y)-L_n\cdot \1{Y_{L_n}>0}$. Hence, by (a),
\begin{equation}\label{eq:E_delta}
\E[\delta_n^\SB|L_n]=\int_0^{L_n}\rho(s)ds-L_n\rho(L_n).
\end{equation} 
Since $L_n\searrow 0$ as $n\to\infty$, the assumption in (b) 
and~\eqref{eq:E_delta} imply that $\E[\delta_n^\SB|L_n]/L_n\to0$ a.s. as 
$n\to\infty$. Jensen's inequality applied to $x\mapsto |x|$ and the inequality 
$|\delta_n^\SB/L_n|\leq 1$ from Theorem~\ref{thm:error} imply that 
$|\E[\delta_n^\SB|L_n]/L_n|\leq\E[|\delta_n^\SB|/L_n|L_n]\leq 1$. Hence,
the dominated convergence theorem~\cite[Thm~1.21]{MR1876169} gives
$\E[\delta_n^\SB/L_n]=\E[\E[\delta_n^\SB|L_n]/L_n]\to0$ as $n\to\infty$.\\
(c) Since the assumption implies that of (b), the conclusion of (b) holds.
Moreover, by (b), \[\lim_{n\to\infty}\E[\delta_n/L_n|L_n]=
\lim_{n\to\infty}\E\big[\delta_n^\SB/L_n+\1{Y_{L_n}>0}\big|L_n\big]
=\lim_{n\to\infty}\rho(L_n)=\rho_0\quad\text{a.s.}\] 
Hence the dominated convergence theorem, applied as in the proof of (b), gives the result.\\
(d) Since $\rho(t)=\rho_0$ for all $t\in[0,T]$, the right-hand side  
in~\eqref{eq:E_delta} equals $0$ a.s., as claimed. Similarly, we have
$\E[\delta_n|L_n]=\E[\delta_n^\SB+L_n\cdot \1{Y_{L_n}>0}|L_n]=L_n\rho_0$ a.s.
\end{proof}

\begin{proof}[Proof of Corollary~\ref{cor:zoom}]

We assumethe existence of a function $a$ on the positive reals, such that 
$(X_{t\delta}/a(\delta))_{t\geq0}$ converges weakly to some process 
$(Z_t)_{t\geq0}$ as $\delta\searrow0$ in the sense of finite-dimensional 
distributions. It is known that the limiting process is then 
self-similar~\cite[Thm~8.5.2]{MR1015093} and thus $\alpha$-stable and the 
function $a$ is regularly varying with index $1/\alpha\in[2,\infty)$. Moreover, 
the convergence extends to the Skorokhod space 
$\mathcal{D}[0,\infty)$~\cite[Cor.~VII.3.6]{MR1943877}. 
(For a detailed description of $a$ and the limit criteria 
see~\cite[Thm~2]{MR3784492}.) 

Note that $Z^{\delta}=(Y_{t\delta}/a(\delta))_{t\in[0,1]}$ converges to 
$Z=(Z_t)_{t\in[0,1]}$ in $\mathcal{D}[0,1]$ and that 
$\tau_1(Z^{\delta})=\tau_{\delta}(Z)/\delta$. It is well known that the supremum 
mapping $x\mapsto\sup_{t\in[0,1]}x_{t}$ and the projection $x\mapsto x_1$ 
are continuous a.s. with respect to the law of $Y$. Next, since the time of the 
maximum of a stable process $(Z_{t}\vee Z_{t-})_{t\in[0,1]}$ is a.s. unique, 
then $\tau_{1}$ is a.s. continuous with respect to the law of $Z$ 
(see e.g.~\cite[Lem. 14.12]{MR1876169}). Thus, as $\delta\searrow0$, 
this yields
\[
\chi^\delta
=(Y_\delta/a(\delta),\ov{Y}_\delta/a(\delta),\tau_\delta(Y)/\delta) 
=(Z_1^\delta,\ov{Z}_1^\delta,\tau_1(Z^\delta))
\overset{d}{\to}
(Z_1,\ov{Z}_1,\tau_1(Z))
=\chi^0.
\]

By the equality in law given in~\eqref{eq:error-law}, we obtain 
\begin{equation}
\label{eq:error_weak_lim}
(Y_{L_{n}}/a(L_{n}),\Delta_{n}/a(L_{n}),\delta_n/L_n) \overset{d}{=}
(Y_{L_{n}}/a(L_{n}),\ov{Y}_{L_{n}}/a(L_n),\tau_{L_n}(Y)/L_n).
\end{equation}
Hence, the result will follow if we prove that 
$\chi^{L_n}\overset{d}{\to}\chi^0$. Recall that the weak convergence is 
equivalent to $\E f(\chi^\delta)\to\E f(\chi^0)$ as $\delta\searrow0$ 
for every bounded and continuous $f$. Since $\ell$ and $Y$ are independent 
and $L_{n}\to0$ a.s., conditional on the sequence $(L_n)_{n\in\N}$ we get 
$\E [f(\chi^{L_n})|L_n]\to\E f(\chi^0)$. The sequence of random variables 
$(\E [f(\chi^{L_n})|L_n])_{n\in\N}$ is bounded (since $f$ is) and converges to 
$\E f(\chi^0)$ a.s. Hence, by the dominated convergence theorem, 
it converges in $L^1$, implying $\chi^{L_n}\overset{d}{\to}\chi^0$. 
Hence, the weak limit holds for the left-hand side of~\eqref{eq:error_weak_lim}, 
which yields Corollary~\ref{cor:zoom}.
\end{proof}

\subsection{Convergence in $L^p$ and the proof of Theorem~\ref{thm:Lp}\label{subsec:proofs-Lp}}
%

Recall that $(\sigma^2,\nu,b)$ is the generating triplet of $X$ associated with 
the cutoff function $x\mapsto \1{|x|<1}$
(see~\cite[Ch.~2,~Def.~8.2]{MR3185174}). 
The moments of the L\'evy measure
$\nu$ at infinity 
are linked with 
the moments of $X^+_t$ and $\ov X_t$ for any $t>0$ as follows. 
By dominating $X$ path-wise with a L\'evy process 
$Z$ equal to $X$ with its jumps in $(-\infty,-1]$ removed and 
applying~\cite[Thm~25.3]{MR3185174} to $Z$, we find that, 
for any $p>0$, the conditions
$I_+^p<\infty$ and $E^p_+<\infty$ (see~\eqref{def:I_pm} and~\eqref{def:E_p} for definition) 
imply $\E\left[(X_{t}^+)^p\right]<\infty$ and 
$\E\exp(p X_t^+)<\infty$, respectively, for all $t>0$. 
Similarly, by applying~\cite[Thm~25.18]{MR3185174} to $Z$ we obtain that 
$I_+^p<\infty$ and $E^p_+<\infty$ imply $\E[\ov{X}_t^p]<\infty$
and $\E\exp(p\ov X_{t})<\infty$, respectively. 


Let $\beta$ be the 
\textit{Blumenthal-Getoor index}~\cite{MR0123362}, defined as 
\begin{equation}\label{def:I0_beta}
\beta=\inf\{p>0:I_0^p<\infty\},\quad\text{where}\quad 
I_0^p=\int_{(-1,1)}|x|^p\nu(dx),\quad\text{for any }p\geq 0,
\end{equation}
and note that $\beta\in[0,2]$ since $I_0^2<\infty$. Moreover, $I_0^1<\infty$ if and 
only if the jumps of $X$ have finite variation, in which case we may define the 
natural drift $b_0=b-\int_{(-1,1)}x\nu(dx)$. Note that $I_0^p<\infty$ for any
$p>\beta$ but $I_0^\beta$ can be either finite or infinite. If $I_0^\beta=\infty$ we 
must have $\beta<2$ and can thus pick $\delta\in(0,2-\beta)$, satisfying 
$\beta+\delta<1$ whenever $\beta<1$, and  define
\begin{equation}\label{eq:BG+}
\beta_+ = \beta+\delta\cdot \1{I_0^\beta=\infty}\in[\beta,2].
\end{equation}
Note that $\beta_+$ is either equal to $\beta$ or arbitrarily close to it.  In either 
case we have $I_0^{\beta_+}<\infty$.

The main aim of the  present subsection is to  prove Theorem~\ref{thm:Lp} and
Propositions~\ref{prop:Lipschitz},~\ref{prop:LocLip}~\&~\ref{prop:barrier}. 
With this in mind, 
we first establish three lemmas and a corollary. 

\begin{lem}\label{lem:BG-bounds}
The L\'evy measure $\nu$ of $X$ satisfies the following for all 
$\kappa\in (0,1]$: 
\begin{equation}\label{eq:BG-bound-1}
	\ov{\nu}(\kappa)=\nu(\R\setminus(-\kappa,\kappa)) \leq \kappa^{-\beta_+}I_{0}^{\beta_+} + \ov{\nu}(1),\qquad
	\ov{\sigma}^2(\kappa)=\int_{(-\kappa,\kappa)}x^2\nu(dx) \leq \kappa^{2-\beta_+}I_{0}^{\beta_+}.
\end{equation}
Moreover the following inequalities hold: 
\begin{eqnarray}
\label{eq:BG-bound-2}
\int_{(-1,-\kappa]\cup[\kappa,1)}|x|^p\nu(dx) & \leq & \kappa^{-(\beta_+-p)^+}I_{0}^{\beta_+}, \quad \text{for $p\in\R$,}\\
\label{eq:BG-bound-3}
\int_{(-\kappa,\kappa)}|x|^p\nu(dx) & \leq & \kappa^{p-\beta_+}I_0^{\beta_+},  
\qquad\text{for $p\geq\beta_+$.}
\end{eqnarray}
\end{lem}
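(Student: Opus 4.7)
The plan is to establish each of the four inequalities by routine comparison arguments that exploit the finiteness of $I_0^{\beta_+}=\int_{(-1,1)}|x|^{\beta_+}\nu(dx)$, which is guaranteed by the definition of $\beta_+$ in~\eqref{eq:BG+}. In each case one replaces the integrand by a multiple of $|x|^{\beta_+}$ using the trivial bound $|x|^q\leq \kappa^{q-r}|x|^r$ (valid either on $\{|x|\leq\kappa\}$ when $q\geq r$, or on $\{|x|\geq\kappa\}$ when $q\leq r$) and then invokes the definition of $I_0^{\beta_+}$.

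For the first bound in~\eqref{eq:BG-bound-1}, decompose $\R\setminus(-\kappa,\kappa)$ into the annulus $(-1,-\kappa]\cup[\kappa,1)$ and its complement outside $(-1,1)$. On the annulus $|x|\geq\kappa$, so $1\leq(|x|/\kappa)^{\beta_+}$; integrating $\nu(dx)$ and separating the contribution $\ov\nu(1)$ from $\R\setminus(-1,1)$ yields the stated inequality. For the second bound in~\eqref{eq:BG-bound-1} use $|x|\leq\kappa\leq 1$ on $(-\kappa,\kappa)$ together with $2-\beta_+\geq 0$ to get $x^2\leq \kappa^{2-\beta_+}|x|^{\beta_+}$ and integrate.

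The inequality~\eqref{eq:BG-bound-2} is handled by splitting into the cases $p\geq\beta_+$ and $p<\beta_+$. When $p\geq\beta_+$, on $(-1,-\kappa]\cup[\kappa,1)$ we have $|x|\leq 1$, hence $|x|^p\leq|x|^{\beta_+}$ and the bound follows with $(\beta_+-p)^+=0$. When $p<\beta_+$, use $|x|\geq\kappa$ on the annulus to write $|x|^p=|x|^{-(\beta_+-p)}|x|^{\beta_+}\leq\kappa^{-(\beta_+-p)}|x|^{\beta_+}$ and integrate. Finally,~\eqref{eq:BG-bound-3} is the symmetric argument on $(-\kappa,\kappa)$: since $p\geq\beta_+$, the inequality $|x|^p=|x|^{p-\beta_+}|x|^{\beta_+}\leq\kappa^{p-\beta_+}|x|^{\beta_+}$ holds, and integrating against $\nu$ gives the result.

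None of the steps presents a real obstacle; the only subtlety is to remember that the integrand comparison direction switches depending on whether one is inside or outside $(-\kappa,\kappa)$ and on the sign of $p-\beta_+$, which is precisely why the exponent $(\beta_+-p)^+$ appears in~\eqref{eq:BG-bound-2}. The use of $\beta_+$ (rather than $\beta$ itself) ensures $I_0^{\beta_+}<\infty$ regardless of whether $I_0^{\beta}$ is finite, so all bounds are non-vacuous.
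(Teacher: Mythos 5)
Your proof is correct and follows essentially the same route as the paper: compare the integrand to a multiple of $|x|^{\beta_+}$ via factors of the form $(|x|/\kappa)^{\pm}$ or $|x|^{\beta_+-p}$ that are at least one on the relevant region, then enlarge the domain of integration to $(-1,1)$ and invoke $I_0^{\beta_+}<\infty$. The case split on the sign of $p-\beta_+$ in~\eqref{eq:BG-bound-2} matches the paper's treatment, so nothing further is needed.
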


\begin{proof}
Multiplying the integrands by (I) $(|x|/\kappa)^{\beta_+}$, 
(II) $(\kappa/|x|)^{2-\beta_+}$, (III) $(|x|/\kappa)^{\beta_+-p}$ if $p\leq\beta_+$ 
or $|x|^{\beta_+-p}$ otherwise and (IV) $(\kappa/|x|)^{p-\beta_+}$, respectively, 
and extending the integration set to $(-1,1)$ yields the bounds. 
\end{proof}

Recall the definition in~\eqref{def:I_pm} of $I_+^p$ and $I_-^p$ for $p\geq0$. 
Denote $\lceil x\rceil=\inf\{m\in\Z:m\geq x\}$ for any $x\in\R$. Recall that the Stirling 
numbers of the second kind $\stirling{m}{k}$ arise in the formula for the moments of 
a Poisson random variable $H$ with mean $\mu\geq0$: for any $m\in\N$ we have 
\begin{equation} \label{eq:Poi_Strl}
\E \left[H^m \right] = \sum_{k=1}^m \stirling{m}{k} \mu^k, 
\qquad\text{where}\qquad
\stirling{m}{k} = \frac{1}{k!}\sum_{i=0}^k (-1)^i\binom{k}{i} (k-i)^m.
\end{equation}
In particular, we have
$\stirling{m}{0}=0$ for all $m\in\N$.
Throughout, 
we will use the following inequality
\begin{equation} \label{eq:P_mean}
\left(\sum_{k=1}^m x_i\right)^p\leq m^{(p-1)^+}\sum_{k=1}^m x_i^p,\quad \text{where $m\in\N$, $x_1,\ldots,x_m\geq0$ and $p\geq0$.}
\end{equation}
This inequality follows easily from the concavity of $x\mapsto x^p$ when $p<1$ and Jensen's inequality 
when $p\geq 1$.

\begin{lem}\label{lem:LevyMomBound} 
For all $t\in[0,T]$ and $p>0$, the condition $I_+^p<\infty$ implies
\begin{equation}\label{eq:p_mom_bound}
\E[\ov X_t^p]\leq m_X^p(t)= 4^{(p-1)^+}\big(C_{p,1}t^{p/\beta_+} + 
C_{p,2}t^{p/2} + C_{p,3}t^p + C_{p,4}t^{\min\{1,p/\beta_+\}}\big),
\end{equation}
where the constants $\{C_{p,i}\}_{i=1}^4$ are given by
\begin{equation}\label{eq:C_p}\begin{split}
C_{p,1}&=2^{(p-1)^+}T^{p-p/\beta_+}\big(I_0^{\beta_+}\big)^p 
+T^{-p/\beta_+}\Big(2^pT^{p/2}\big(I_0^{\beta_+}\big)^{p/2}\cdot \1{p\leq 2}\\
&+2(p^2/(p-1))^p\exp\big(TI_0^{\beta_+}-p\big)\cdot \1{p>2}\Big),\\
C_{p,2}&=|\sigma|^p\Gamma\Big(\frac{p+1}{2}\Big)\frac{2^{p/2}}{\sqrt{\pi}},\qquad
C_{p,3}=2^{(p-1)^+}\big(b^+\cdot \1{I_0^1=\infty}+b_0^+\cdot\1{I_0^1<\infty}\big)^p,\\
C_{p,4}&=T^{(1-p/\beta_+)^+}\left(I_+^p+ I'\right)
\sum_{k=1}^{\lceil p\rceil}\stirling{\lceil p\rceil}{k}
T^{k-1}\left(I' +\nu([1,\infty))\right)^{k-1},
\end{split}\end{equation}
where 
$I'=\int_{(0,1)}x^{\beta_+}\nu(dx)$
and
$\Gamma(\cdot)$ is the Gamma function.
Moreover,
 if $I_+^1<\infty$, then
\begin{equation}
\label{eq:ChenLaberton}
\E[\ov{X}_t]\leq|\sigma|\sqrt{\frac{2}{\pi}}\sqrt{t}+
\begin{cases}
(b^{+}+I_{+}^{1})t+2\sqrt{I_{0}^{2}}\sqrt{t}, & \beta_+=2,\\
(b^{+}+I_{+}^{1})t+2T^{-1/\beta_+}\Big(\sqrt{TI_{0}^{\beta_+}}+TI_{0}^{\beta_+}
\Big)t^{1/\beta_+}, & \beta_+\in(1,2),\\
\left(b_0^+ +\int_{(0,\infty)}x\nu(dx)\right)t, & \beta_+\leq 1.
\end{cases}
\end{equation}
\end{lem}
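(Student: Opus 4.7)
The plan is to combine the Lévy–Itô decomposition with a truncation level $\kappa\in(0,1]$ to be chosen optimally, and bound the supremum of each component separately. Writing
\[ X_t=\sigma B_t+b(\kappa)\,t+M^{(\kappa)}_t+J^{(\kappa)}_t, \]
where $M^{(\kappa)}$ is the compensated martingale of jumps of magnitude $<\kappa$, $J^{(\kappa)}$ is the compound Poisson process of jumps of magnitude $\geq\kappa$, and $b(\kappa)=b-\int_{\kappa\le|x|<1}x\,\nu(dx)$ is the drift corresponding to the cutoff $1_{|x|<\kappa}$. Bounding $\ov{X}_t\leq|\sigma|\ov{B}_t+b(\kappa)^+ t+\ov{M^{(\kappa)}}_t+(\ov{J^{(\kappa)}}_t)^+$ and applying the concavity/convexity inequality \eqref{eq:P_mean} with $m=4$ gives
\[ \E\ov{X}_t^p\leq 4^{(p-1)^+}\bigl(|\sigma|^p\E\ov{B}_t^p+(b(\kappa)^+t)^p+\E\ov{M^{(\kappa)}}_t^p+\E(\ov{J^{(\kappa)}}_t)^p\bigr). \]
The finite–variation case $I_0^1<\infty$ is handled by leaving small jumps uncompensated so that $M^{(\kappa)}$ disappears and the drift becomes $b_0+\int_{|x|<\kappa}x\,\nu(dx)$, explaining the indicator split in the definition of $C_{p,3}$.

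I would then bound each piece. The Brownian supremum satisfies $\ov{B}_t\stackrel{d}{=}|B_t|$, yielding $|\sigma|^p\E\ov B_t^p=C_{p,2}\,t^{p/2}$. For the drift, I would use Lemma~\ref{lem:BG-bounds} (inequality~\eqref{eq:BG-bound-2} with $p=1$) to absorb $\int_{\kappa\leq|x|<1}|x|\nu(dx)$ into the $\kappa$-dependent small–jump contribution. For $\ov{M^{(\kappa)}}_t$ I would apply Doob's $L^p$ inequality, combined with Jensen to exploit $\E|M^{(\kappa)}_t|^2=t\,\ov{\sigma}^2(\kappa)\leq t\kappa^{2-\beta_+}I_0^{\beta_+}$ (from Lemma~\ref{lem:BG-bounds}) when $p\leq 2$; for $p>2$, the Burkholder–Davis–Gundy inequality, followed by bounding moments of the quadratic variation $[M^{(\kappa)}]_t=\sum_{s\leq t}(\Delta M^{(\kappa)}_s)^2$ through the Poisson–moment identity \eqref{eq:Poi_Strl}, is needed; this is what introduces the Stirling numbers and the $\exp(TI_0^{\beta_+}-p)$ factor appearing in $C_{p,1}$. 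The positive part of $J^{(\kappa)}$ is dominated by the sum of its positive jumps, itself a compound Poisson process with intensity $\nu([\kappa,\infty))\leq I'+\nu([1,\infty))$ and jump $p$-th moment bounded by $I_+^p+I'$, so \eqref{eq:Poi_Strl} combined with \eqref{eq:P_mean} yields the expression for $C_{p,4}$. Finally, I would set $\kappa=(t/T)^{1/\beta_+}\in(0,1]$: this choice makes the small-jump variance term scale as $t^{p/\beta_+}$ and produces the exponents $p/\beta_+$ and $\min\{1,p/\beta_+\}$ in the statement. The refined $p=1$ bound~\eqref{eq:ChenLaberton} follows the same scheme but with the sharper estimate $\E\ov{M^{(\kappa)}}_t\leq 2\sqrt{t\,\ov\sigma^2(\kappa)}$ (via Doob-$L^2$ and Jensen) and the explicit compensator of large positive jumps, together with the three subcases $\beta_+=2$, $\beta_+\in(1,2)$, $\beta_+\leq1$ making the $\kappa$-optimization either unnecessary or particularly clean.

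The main obstacle will be the martingale moment estimate for $p>2$: here BDG reduces the problem to bounding $\E[M^{(\kappa)}]_t^{p/2}$, which in turn requires computing moments of a compound Poisson integral via the Stirling–number formula \eqref{eq:Poi_Strl}; the resulting sum must then be rearranged as a closed-form bound that also behaves correctly as $p\downarrow 2$ and as $\beta_+\uparrow 2$. A secondary difficulty is merely bookkeeping: keeping the $T$-dependence explicit so that the final bound, valid uniformly over $t\in[0,T]$, factors through the four prescribed constants $C_{p,1},\ldots,C_{p,4}$ despite the case distinctions ($p\leq 2$ vs.\ $p>2$, finite vs.\ infinite variation, $\beta_+\lessgtr 1$). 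Once these are handled, applying~\eqref{eq:P_mean} with $m=4$ one last time to the four bounded pieces gives the bound $m_X^p(t)$.
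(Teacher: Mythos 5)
Most of your plan coincides with the paper's proof: the L\'evy--It\^o decomposition at level $\kappa$, the four-term bound followed by \eqref{eq:P_mean} with $m=4$, the treatment of the Brownian and drift parts via Lemma~\ref{lem:BG-bounds} (including the $I_0^1<\infty$ split in $C_{p,3}$), the domination of the large-jump part by the sum of its positive jumps with \eqref{eq:Poi_Strl}, the Doob/Jensen estimate for the compensated small-jump martingale when $p\leq2$, the choice $\kappa=(t/T)^{1/\beta_+}$, and the three-case argument for \eqref{eq:ChenLaberton}. The genuine gap is your route for the small-jump martingale when $p>2$. Its quadratic variation $[M^{(\kappa)}]_t=\sum_{s\leq t}(\Delta M^{(\kappa)}_s)^2$ is, whenever $\nu$ has infinite mass near the origin (precisely the case in which the lemma has content), an infinite-activity subordinator: the number of small jumps on $[0,t]$ is not Poisson, so the Poisson-moment identity \eqref{eq:Poi_Strl} cannot be applied to it. That identity is usable only for the genuinely compound Poisson large-jump part $J^{2,\kappa}$, and this is the sole source of the Stirling numbers, which accordingly sit in $C_{p,4}$ and not in $C_{p,1}$ as you claim. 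So your BDG step, as described, would fail as written; it would need to be replaced by a different estimate of $\E\big[[M^{(\kappa)}]_t^{p/2}\big]$ (e.g.\ via cumulants of a subordinator with jumps bounded by $\kappa^2$, or a further splitting), and even a repaired version would not produce the specific constant $2(p^2/(p-1))^p\exp(TI_0^{\beta_+}-p)$ appearing in \eqref{eq:C_p}. Since the lemma asserts the bound with these explicit constants, which are used downstream (e.g.\ in the non-asymptotic confidence intervals), a proof yielding different constants does not establish the statement as written.

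For comparison, the paper handles $p>2$ without BDG: Doob's $L^p$ maximal inequality reduces the problem to $\E[|J^{1,\kappa}_t|^p]$, the elementary bound $|x|^p\leq(p/e)^pe^{|x|}$ converts this into an exponential moment, and the L\'evy--Khintchine exponent of $J^{1,\kappa}$ is controlled by $\psi_\kappa(\pm\kappa^{-1})\leq\kappa^{-\beta_+}I_0^{\beta_+}$ using $e^x-1-x\leq x^2$ for $|x|\leq1$ together with \eqref{eq:BG-bound-1}; substituting $\kappa=(t/T)^{1/\beta_+}$ gives exactly the stated term in $C_{p,1}$. One further (minor, bookkeeping) point: in the large-jump part the intensity and jump moments are bounded by $\nu([\kappa,\infty))\leq\kappa^{-\beta_+}I'+\nu([1,\infty))$ and $\int_{[\kappa,\infty)}x^p\nu(dx)\leq I_+^p+\kappa^{-(\beta_+-p)^+}I'$; your summary drops the $\kappa$-powers, yet it is precisely these, after the substitution $\kappa=(t/T)^{1/\beta_+}$, that generate the factors $T^{k-1}$, $T^{(1-p/\beta_+)^+}$ and the exponent $t^{\min\{1,p/\beta_+\}}$ in $C_{p,4}$.
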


\begin{rem}
(i) The formula in~\eqref{eq:ChenLaberton} essentially follows 
from~\cite[Lem.~5.2.2 \&~Eq.~(5.2)]{MR2996014} for $\beta_+\in(1,2]$ and 
from~\cite[Prop.~3.4]{MR2867949} for $\beta_+\leq 1$. 
A new proof of~\eqref{eq:ChenLaberton} given below 
is based on the methodology used to establish a more general inequality in~\eqref{eq:p_mom_bound}. 
Moreover, the dominant powers of $t$ in both bounds~\eqref{eq:p_mom_bound}  and~\eqref{eq:ChenLaberton}
coincide
in the case $p=1$ 
with slightly better constants in~\eqref{eq:ChenLaberton}.
The estimate in~\eqref{eq:p_mom_bound} works for all $p>0$
and is for the reasons of clarity applied 
in the proofs that follow even in the case $p=1$.\\
(ii) Note that $C_{p,2}=0$ if $\sigma=0$ and, if $X$ is spectrally negative, we 
have $C_{p,4}=0$. \\
(iii) The constants in~\eqref{eq:C_p} are well defined even if the assumption 
$I_+^p<\infty$ fails. The inequality in~\eqref{eq:p_mom_bound} holds trivially in 
this case since $C_{p,4}=\infty$.
\end{rem}

Recall that the \textit{L\'evy-It\^o decomposition}~\cite[Thms~19.2~\&~19.3]{MR3185174} of the L\'evy process $X$ 
\label{page:levy_ito_decomp} with generating triplet $(\sigma^2,\nu,b)$ at a 
level $\kappa\in(0,1]$ is given by $X_t=b_\kappa t+\sigma B_t+J^{1,\kappa}_t+
J^{2,\kappa}_t$ for all $t\geq0$, where $b_\kappa = b-\int_{(-1,1)\setminus 
(-\kappa,\kappa)} x\nu(dx)$ and $J^{1,\kappa}=(J^{1,\kappa}_t)_{t\geq0}$ (resp. 
$J^{2,\kappa}=(J^{2,\kappa}_t)_{t\geq0}$) is L\'evy with triplet 
$(0,\nu|_{(-\kappa,\kappa)},0)$ (resp. $(0,\nu|_{\R\setminus(-\kappa,\kappa)},
b-b_\kappa)$ - recall that we are using the cutoff function $x\mapsto
\1{|x|\leq 1}$) and $B=(B_t)_{t\geq0}$ is a standard Brownian motion. Moreover, 
the processes $B, J^{1,\kappa}, J^{2,\kappa}$ are independent, $J^{1,\kappa}$ is 
an $L^2$-bounded martingale with the magnitude of jumps at most $\kappa$ and 
$J^{2,\kappa}$ is a compound Poisson process with intensity $\ov\nu(\kappa)$ (see~\eqref{eq:BG-bound-1} above) and no drift. 

\begin{proof}
By the discussion above we have $\ov{X}_t \leq b_\kappa^+t + 
|\sigma|\ov{B}_t + \ov{J}^{1,\kappa}_t + \ov{J}^{2,\kappa}_t$. Then~\eqref{eq:P_mean} implies
\begin{equation}\label{eq:LevyItoBound}
\E\big[\ov{X}_t^p\big]\leq 4^{(p-1)^+}\big((b_\kappa^+)^pt^p + 
|\sigma|^p\E\big[\ov{B}_t^p\big] + \E\big[\big(\ov{J}^{1,\kappa}_t\big)^p\big] 
+ \E\big[\big(\ov{J}^{2,\kappa}_t\big)^p\big]),
\end{equation}
where $\ov{B}_t\overset{d}{=}|B_t|$ and so $\E\big[\ov{B}_t\big]=t^{p/2}
\Gamma\big(\frac{p+1}{2}\big)2^{p/2}/\sqrt{\pi}$~\cite[Prop.~13.13]{MR1876169},
which yields $C_{p,2}$ in all cases. By Lemma~\ref{lem:BG-bounds} we have
\[b_\kappa^+\leq\begin{cases}
b_0^+ +\int_{(-\kappa,\kappa)}|x|\nu(dx)\leq
b_0^+ +\kappa^{1-\beta_+}I_0^{\beta_+},
&I_0^1<\infty\quad(\text{i.e. }\beta_+\leq 1)\\
b^++\kappa^{1-\beta_+}I_0^{\beta_+},
&I_0^1=\infty\quad(\text{i.e. }\beta_+> 1).
\end{cases}\]
Hence, by~\eqref{eq:P_mean}, we obtain
\begin{equation}\label{eq:bound_bk}\begin{split}
(b_\kappa^+)^p&\leq \big(\kappa^{1-\beta_+}I_0^{\beta_+} + 
\1{I_0^1=\infty}b^++\1{I_0^1<\infty}b_0^+\big)^p\\
&\leq 2^{(p-1)^+}\big(\kappa^{p-p\beta_+}\big(I_0^{\beta_+}\big)^p + 
\1{I_0^1=\infty}(b^+)^p+\1{I_0^1<\infty}(b_0^+)^p\big).
\end{split}\end{equation}
$\ov{J}^{2,\kappa}_t$ is dominated by the sum of the positive jumps of 
$J^{2,\kappa}$ over the interval $[0,t]$, which has the same law as 
$\sum_{k=1}^{N_t}R_k$ for iid random variables $(R_k)_{k\in\N}$ with law 
$\nu|_{[\kappa,\infty)}/\nu([\kappa,\infty))$ and an independent Poisson 
random variable $N_t$ with mean $t\nu([\kappa,\infty))$. Note that since 
$N_t$ is a nonnegative integer, then 
$N_t^{(p-1)^++1}\leq N_t^{\lceil p\rceil}$. Hence, 
the independence between $(R_k)_{k\in\N}$ and $N_t$, the 
inequality $(\sum_{k=1}^{N_t}R_k)^p\leq N_t^{(p-1)^+}\sum_{k=1}^{N_t}R_k^p$ (which follows from~\eqref{eq:P_mean})
and~\eqref{eq:Poi_Strl}
yield
\[\begin{split}
\E\big[\big(\ov{J}^{2,\kappa}_t\big)^p\big] &\leq 
\E\bigg[\bigg(\sum_{k=1}^{N_t}R_k\bigg)^p\bigg] \leq 
\E\bigg[N_t^{(p-1)^+}\sum_{k=1}^{N_t}R_k^p\bigg] = 
\E[R_1^p]\E\big[N_t^{(p-1)^++1}\big] \leq 
\E[R_1^p]\E\big[N_t^{\lceil p\rceil}\big] \\ &=
\bigg(\int_{[\kappa,\infty)}x^p\frac{\nu(dx)}{\nu([\kappa,\infty))}\bigg)
\bigg(\sum_{k=1}^{\lceil p\rceil}
\stirling{\lceil p\rceil}{k}(t\nu([\kappa,\infty)))^k\bigg).
\end{split}\]
Denote 
$I'=\int_{(0,1)}x^{\beta_+}\nu(dx)$.
The first inequality in~\eqref{eq:BG-bound-1} and the bound in~\eqref{eq:BG-bound-2}
of Lemma~\ref{lem:BG-bounds} applied to $\nu|_{(0,\infty)}$ and the facts
$\kappa\leq 1$ and $t\leq T$ yield
\begin{equation}\label{eq:bound_J2}\begin{split}
 \E\big[\big(\ov{J}^{2,\kappa}_t\big)^p\big] 
& \leq
t\bigg(I_+^p+\int_{[\kappa,1)}x^p\nu(dx)\bigg)
\sum_{k=1}^{\lceil p\rceil}\stirling{\lceil p\rceil}{k}
\left(t\kappa^{-\beta_+}
I'+t\nu([1,\infty))\right)^{k-1}\\
&\leq
t\kappa^{-(\beta_+-p)^+}\left(I_+^p+I'\right)
\sum_{k=1}^{\lceil p\rceil}\stirling{\lceil p\rceil}{k}
\left(t\kappa^{-\beta_+}
I'+T\nu([1,\infty))\right)^{k-1}.
\end{split}\end{equation}

Assume $p\leq 2$. Jensen's inequality applied to the function $x\mapsto x^{2/p}$ 
and Doob's martingale inequality~\cite[Prop.~7.6]{MR1876169} applied to 
$J^{1,\kappa}$ yield
\begin{equation}\label{bound_J1_1}
\E\big[\big(\ov{J}^{1,\kappa}_t\big)^p\big] \leq\E\big[\big(\ov{J}^{1,\kappa}_t\big)^2\big]^{p/2}
\leq 2^p\E\big[(J^{1,\kappa}_t)^2\big]^{p/2}
=2^p\left(\ov\sigma(\kappa)\right)^pt^{p/2},\end{equation} 
where $\ov\sigma(\kappa)$ denotes the 
positive square root of $\ov\sigma^2(\kappa)$. Hence~\eqref{eq:LevyItoBound} 
for $p=1$, the first inequality in~\eqref{eq:bound_J2} and the estimate in~\eqref{bound_J1_1} give
\begin{equation}\label{eq:LevyItoBound2}
\E \ov{X}_t \leq \Big(b_\kappa^+ + \int_{[\kappa,1)}x\nu(dx)+ I_+^1\Big)t + \Big(|\sigma|\sqrt{\frac{2}{\pi}} + 2 \ov{\sigma}(\kappa) \Big)\sqrt{t}.
\end{equation}
If $\beta_+=2$, then taking $\kappa=1$ in~\eqref{eq:LevyItoBound2} yields the
first formula in~\eqref{eq:ChenLaberton}. If $\beta_+\leq1$ then
$I_0^1<\infty$. Letting $\kappa\to0$ in~\eqref{eq:LevyItoBound2}
we obtain the third formula in~\eqref{eq:ChenLaberton}. Set 
$\kappa=(t/T)^{1/\beta_+}$ and apply Lemma~\ref{lem:BG-bounds} to get
$t\ov{\sigma}^2(\kappa)\leq t^{2/\beta_+}T^{1-2/\beta_+}I_0^{\beta_+}$. Hence
$t\int_{[\kappa,1)}x\nu(dx)\leq t^{1/\beta_+}T^{1-1/\beta_+}I_0^{\beta_+}$,
and~\eqref{eq:bound_bk}~\&~\eqref{eq:LevyItoBound2} yield the second formula 
in~\eqref{eq:ChenLaberton}, completing the proof of~\eqref{eq:ChenLaberton}. 
To prove~\eqref{eq:p_mom_bound} for general $p\in(0,2]$, we again set 
$\kappa=(t/T)^{1/\beta_+}$ and use the inequalities $t\leq T$  
and~\eqref{eq:bound_bk}--\eqref{bound_J1_1} as before. 
More specifically, (I)~\eqref{eq:bound_bk}, 
(II)~\eqref{eq:bound_J2} and (III)~\eqref{eq:bound_bk} ~\&~\eqref{bound_J1_1} 
establish the values of (I) $C_{p,3}$, (II) $C_{p,4}$ and (III) $C_{p,1}$, respectively. 
This concludes the proof for the case $p\leq 2$.

Assume $p>2$. 
The only bound from the case $p\leq 2$ above that does not apply in this case 
is the one on $\E[(\ov J_t^{1,\kappa})^p]$. Doob's martingale inequality and 
the bound $|x|^p\leq (p/e)^pe^{|x|}$ for all $x\in\R$ yield
\[
\E\big[\big(\ov{J}^{1,\kappa}_t\big)^p\big]
\leq\Big(\frac{p}{p-1}\Big)^p\E\big[|J^{1,\kappa}_t|^p\big]
=\Big(\frac{\kappa p}{p-1}\Big)^p\E\big[(\kappa^{-1}|J^{1,\kappa}_t|)^p\big]
\leq\Big(\frac{\kappa p^2/e}{p-1}\Big)^p
\E\big[e^{\kappa^{-1}|J^{1,\kappa}_t|}\big].
\]
Note $\E\big[e^{\kappa^{-1}|J^{1,\kappa}_t|}\big]\leq \E\big[e^{\kappa^{-1}J^{1,\kappa}_t}+e^{-\kappa^{-1}J^{1,\kappa}_t}\big]
=e^{t\psi_\kappa(\kappa^{-1})}+e^{t\psi_\kappa(-\kappa^{-1})}$, 
where $\psi_\kappa$ is the L\'evy-Khintchine exponent of $J^{1,\kappa}_1$,  
i.e. $\psi_\kappa(u)=\int_{(-\kappa,\kappa)}(e^{ux}-1-ux)\nu(dx)$ for 
$u\in\R$. Using the elementary bound $e^x-1-x\leq x^2$ for all $|x|\leq 1$
and~\eqref{eq:BG-bound-1}, we find that 
$\psi_\kappa(u)\leq u^2\ov\sigma^2(\kappa)
\le u^2\kappa^{2-\beta_+}I_0^{\beta_+}$ for $|u|\leq\kappa^{-1}$. 
By setting $\kappa=(t/T)^{1/\beta_+}$, we obtain
\begin{equation}\label{eq:bound_J1_2}
\E\big[\big(\ov{J}^{1,\kappa}_t\big)^p\big]
\leq 2\Big(\frac{\kappa p^2/e}{p-1}\Big)^p
e^{t\kappa^{-\beta_+}I_0^{\beta_+}}
=2t^{p/\beta_+}T^{-p/\beta_+}
\Big(\frac{p^2}{p-1}\Big)^pe^{TI_0^{\beta_+}-p}.
\end{equation}
As before we obtain~\eqref{eq:p_mom_bound} as follows: (I)~\eqref{eq:bound_bk}, (II)~\eqref{eq:bound_J2} and 
(III)~\eqref{eq:bound_bk}~\&~\eqref{eq:bound_J1_2} establish the values of 
(I) $C_{p,3}$, (II) $C_{p,4}$ and (III) $C_{p,1}$, respectively,
which completes the proof.
\end{proof}

Recall that $\beta$, $I_0^1$ and  
$\beta_+$ are defined in~\eqref{def:I0_beta} and~\eqref{eq:BG+} above.
To describe the dominant power (as $t\downarrow0$) in the preceding results, 
define $\alpha\in[\beta,2]$ and $\alpha_+\in[\beta_+,2]$ by
\begin{equation}\label{eq:alpha}
\alpha = 2\cdot \1{\sigma\neq0} +\1{\sigma=0}\begin{cases}
1, & I_0^1<\infty\text{ and }b_0\neq0\\
\beta, & \text{otherwise},
\end{cases}
\quad\text{and}\qquad
\alpha_+ = \alpha+(\beta_+-\beta)\cdot \1{\alpha=\beta}.
\end{equation}
Note that the index $\alpha$ agrees with the one in~\cite[Eq.~(2.5)]{ZoomIn} and 
$\alpha_+>0$ since, by Assumption~\ref{asm:D}, $X$ is not compound Poisson 
with drift. Define
\begin{equation}\label{eq:eta}
\eta_p = 1+\1{p>\alpha}+\frac{p}{\alpha_+} \cdot \1{p\leq\alpha}\in(1,2],
\quad\text{for any}\quad p>0,
\end{equation}
and note that $\eta_p\geq 3/2$ for $p\geq 1$. 

\begin{rem}\label{rem:Cp_alpha}
(i) In Theorem~\ref{thm:Lp} and Propositions~\ref{prop:Lipschitz}, 
\ref{prop:LocLip} and~\ref{prop:barrier} we assumed that $p\geq 1$ for 
reasons of clarity. This is not a necessary assumption and the proofs can be 
made to work with minor modifications for any $p>0$. However, since $\eta_p
\to1$ as $p\to 0$, the convergence may become arbitrarily slow as $p\to 0$ 
(to be expected since $x^p\to 1$ as $p\to0$ for any $x>0$).\\
(ii) The constants $C_{p,2}$ and $C_{p,3}$ in Lemma~\ref{lem:LevyMomBound} above
satisfy the following: (a)  if $\alpha<2$, then 
$\sigma=0$  and hence $C_{p,2}=0$; (b) 
if $\alpha<1$, then  $I_0^1<\infty$ and $b_0=0$ and hence  $C_{p,3}=0$.
\end{rem}

\begin{cor}\label{cor:LevyMomBound}
Pick $p>0$, let $\{C_{p,i}\}_{i=1}^4$ be as in Lemma~\ref{lem:LevyMomBound} 
and define the constants $C_p(X)$ and $C_p^\ast(X)$ as follows:
\begin{equation}\label{eq:C_pX}\begin{split}
\frac{C_p(X)}{4^{(p-1)^+}}&=\begin{cases}
C_{p,1}T^{\frac{p}{\beta_+}-\frac{p}{\alpha_+}}+C_{p,2}+
C_{p,3}T^{p-\frac{p}{\alpha_+}}
+C_{p,4}T^{\min\{1,\frac{p}{\beta_+}\}-\frac{p}{\alpha_+}},
&p\leq\alpha,\\
C_{p,1}T^{\frac{p}{\beta_+}-1}+C_{p,2}T^{\frac{p}{2}-1}
+C_{p,3}T^{p-1}+C_{p,4},&p>\alpha, 
\end{cases}\\
C_p^\ast(X)&=C_p(X)\cdot \1{I_+^p<\infty}
+C_p(-X)\cdot \1{I_+^p=\infty}.
\end{split}\end{equation}
Then, if  $I_+^p<\infty$ (resp. $\min\{I_+^p,I_-^p\}<\infty$), the inequality 
\[\E[\ov{X}_t^p]\leq C_p(X)t^{\eta_p-1}\quad(\text{resp. }
\E[(\ov{X}_t-X_t^+)^p]\leq C_p^\ast(X)t^{\eta_p-1}).\]
holds for all $t\in[0,T]$.
\end{cor}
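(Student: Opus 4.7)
The strategy is to combine the moment estimate of Lemma~\ref{lem:LevyMomBound} with the elementary inequality $t^a\leq T^{a-(\eta_p-1)}t^{\eta_p-1}$, valid for every $t\in[0,T]$ whenever $a\geq\eta_p-1$. The lemma gives $\E[\ov X_t^p]\leq m_X^p(t)$, where $m_X^p(t)/4^{(p-1)^+}$ is a sum of four monomials $C_{p,i}t^{a_i}$ with exponents $a_1=p/\beta_+$, $a_2=p/2$, $a_3=p$ and $a_4=\min\{1,p/\beta_+\}$. It therefore suffices to verify, in each of the two cases defining $\eta_p$, that either the exponent $a_i$ dominates $\eta_p-1$ or the coefficient $C_{p,i}$ vanishes; factoring out $t^{\eta_p-1}$ and collecting the resulting $T$-powers reproduces precisely the constant $C_p(X)$ displayed in~\eqref{eq:C_pX}. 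The main (and essentially only) obstacle is the patient handling of the degenerate cases in which the naive exponent inequality fails but the corresponding constant is forced to be zero.

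When $p\leq\alpha$ we have $\eta_p-1=p/\alpha_+$. The inequalities $a_1\geq\eta_p-1$ and $a_4\geq\eta_p-1$ follow from $\alpha_+\geq\beta_+$, with the subcase $p/\beta_+>1$ of $a_4$ handled by $\alpha_+\geq\alpha\geq p$. The bounds $a_2\geq\eta_p-1$ and $a_3\geq\eta_p-1$ require $\alpha_+\geq 2$ and $\alpha_+\geq 1$ respectively; whenever they fail, Remark~\ref{rem:Cp_alpha}(ii) yields $C_{p,2}=0$ and $C_{p,3}=0$. When $p>\alpha$ we have $\eta_p-1=1$, so the required inequalities are $p\geq\beta_+$ (for $a_1$ and $a_4$), $p\geq 2$ (for $a_2$) and $p\geq 1$ (for $a_3$). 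The first is secured by choosing the auxiliary $\delta$ in~\eqref{eq:BG+} smaller than $p-\beta$ (when $\alpha=\beta$) or smaller than $\alpha-\beta$ (when $\alpha>\beta$), which is permissible since $p>\alpha\geq\beta$; the failure of $p\geq 2$ forces $\alpha<2$, hence $\sigma=0$ and $C_{p,2}=0$, while the failure of $p\geq 1$ forces $\alpha<1$, which (given $\sigma=0$ and $\beta<1$) implies $I_0^1<\infty$ together with $b_0=0$, so that $C_{p,3}=0$ by inspection of~\eqref{eq:C_p}.

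For the bound on $\E[(\ov X_t-X_t^+)^p]$, I would exploit the identity $\ov X_t-X_t^+=\min\{\ov X_t,\ov X_t-X_t\}$ together with the time-reversal duality $\ov X_t-X_t\overset{d}{=}\sup_{s\in[0,t]}(-X_s)$ of~\citep[Prop.~VI.3]{MR1406564}. If $I_+^p<\infty$, applying the first part of the corollary to $X$ gives $\E[(\ov X_t-X_t^+)^p]\leq\E[\ov X_t^p]\leq C_p(X)t^{\eta_p-1}$; if instead $I_-^p<\infty$, the same bound applied to the reflected process $-X$ yields the analogous estimate with $C_p(-X)$. Since $\sigma$, the Blumenthal--Getoor index $\beta$, the finiteness of $I_0^1$, and the indicator $\{b_0\neq 0\}$ are invariant under $X\mapsto -X$, the exponent $\eta_p$ coincides for the two processes, so taking the smaller of the two right-hand sides delivers $\E[(\ov X_t-X_t^+)^p]\leq C_p^\ast(X)t^{\eta_p-1}$ as required.
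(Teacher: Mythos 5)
Your proof is correct and takes essentially the same route as the paper's: bound $\E[\ov X_t^p]$ via Lemma~\ref{lem:LevyMomBound}, check that each exponent of $t$ in~\eqref{eq:p_mom_bound} is at least $\eta_p-1$ (invoking Remark~\ref{rem:Cp_alpha}(ii) to annihilate the terms where the exponent comparison fails, and taking $\delta$ in~\eqref{eq:BG+} small enough so that $p>\alpha$ gives $p\geq\beta_+$), then absorb the surplus powers of $t$ into powers of $T$, and finally reduce the $\ov X_t-X_t^+$ bound to $X$ or $-X$ through $\ov X_t-X_t^+=\min\{\ov X_t,\ov X_t-X_t\}$ together with the invariance of $\alpha$ (hence of $\eta_p$) under $X\mapsto-X$. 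Your write-up is simply a more explicit spelling-out of the degenerate cases that the paper dispatches by citing the remark.
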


\begin{proof}
Since $\ov{X}_t-X_t^+=\min\{\ov{X}_t,\ov{X}_t-X_t\}$ is stochastically 
dominated by both $\ov{X}_t$ and $\ov{(-X)}_t$, then it suffices to prove the 
result for $\ov{X}_t$. (It is critical here, as seen in the definition 
of $C_p^\ast(X)$ in~\eqref{eq:C_pX}, that the definition of $\alpha$ is the
same for $X$ and $-X$.) Since $t^{q+r}\leq t^qT^r$ for $t\in[0,T]$ and 
$r\geq 0$, then it suffices to show that the exponent of $t$ in each term 
of~\eqref{eq:p_mom_bound} is at least $\eta_p-1$. By 
Remark~\ref{rem:Cp_alpha}(ii), this is trivially the case when 
$p\leq\alpha\leq\alpha_+\leq 2$. Recall that $\alpha_+$ is arbitrarily close 
(or equal) to $\alpha$. Hence, in the case $p>\alpha$, we may assume that 
$p>\alpha_+\geq\beta_+$ and use Remark~\ref{rem:Cp_alpha}(ii) to obtain the 
result and conclude the proof.
\end{proof}

\begin{rem}
If $X$ is spectrally negative (i.e. $\nu(\R_+)=0$), then $C_{p,4}=0$ and 
therefore $\E[\ov{X}_t^p]=\O(t^{p/\max\{1,\alpha_+\}})$ as $t\searrow 0$, 
implying the rate in~\cite[Lem.~6.5]{MR2867949}, which is the best in 
the literature to date for the spectrally negative case. 
In certain specific cases, Lemma~\ref{lem:LevyMomBound} implies a rate better 
than the one stated in Corollary~\ref{cor:LevyMomBound}. For example,
if  $\beta<1$ (thus $\beta_+<1$), $\sigma=0$, $I_+^p<\infty$ and the natural drift satisfies $b_0<0$ (thus  $\alpha=1$),
then by 
Lemma~\ref{lem:LevyMomBound} we have 
$\E[\ov{X}_t^p]=\O(t^{p/\beta_+})$ if $p\leq\beta$, 
which is sharper than the bound 
$\E[\ov{X}_t^p]=\O(t^{p})$
implied by
Corollary~\ref{cor:LevyMomBound}. 
Analogous improvements can be stated for $\ov{X}_t-X_t^+$, 
when either 
($I_+^p<\infty$ \& $b_0<0$) or  ($I_-^p<\infty$ \& $b_0>0$).
For the sake of presentation, throughout the paper we work with bounds in 
Corollary~\ref{cor:LevyMomBound}. 
\end{rem}

\begin{lem}\label{lem:small_t_to_Lp} 
Let $X$ be L\'evy process satisfying~\ref{asm:D} and let 
$\Delta_n$ and $\Delta_n^\SB$ be as in Theorem~\ref{thm:error}. 
If $\E[\ov X_t^p]\leq Ct^q$ (resp. $\E[(\ov X_t-X_t^+)^p]\leq Ct^q$) for some 
$C,q,p>0$ and all $t\in[0,T]$, then 
\[\E\big[\Delta_n^p\big]\leq CT^q(1+q)^{-n}\quad(\text{resp. }
\E\big[\big(\Delta_n^\SB\big)^p\big]\leq CT^q(1+q)^{-n})\text{ for all }n\in\N.\]
\end{lem}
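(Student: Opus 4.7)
The plan is to combine the explicit conditional distributional identity from Theorem~\ref{thm:error} with the simple multiplicative structure of the stick-breaking sequence $(L_n)_{n\in\N}$. First I would note that, since $Y \overset{d}{=} X$ is independent of $L_n$, Theorem~\ref{thm:error} gives the conditional laws
\[
\P(\Delta_n \in \cdot \mid L_n) = \P(\ov Y_{L_n} \in \cdot \mid L_n)
\quad\text{and}\quad
\P(\Delta_n^\SB \in \cdot \mid L_n) = \P(\ov Y_{L_n} - Y_{L_n}^+ \in \cdot \mid L_n).
\]
Consequently the hypothesis $\E[\ov X_t^p] \leq C t^q$ (resp.\ $\E[(\ov X_t - X_t^+)^p] \leq C t^q$), valid for all $t \in [0,T]$ and applied to the independent copy $Y$ with $t = L_n \leq T$, yields the path-wise bound
\[
\E[\Delta_n^p \mid L_n] \leq C L_n^q
\qquad(\text{resp.}\ \E[(\Delta_n^\SB)^p \mid L_n] \leq C L_n^q).
\]

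Next I would compute $\E[L_n^q]$ directly from the construction of the stick-breaking process. Since $L_n = L_{n-1}(1 - V_n)$ with $V_n \sim U(0,1)$ independent of $L_{n-1}$, induction gives $L_n = T \prod_{k=1}^n (1 - V_k)$, and independence together with $\E[(1-V_1)^q] = \int_0^1 (1-v)^q\,dv = (1+q)^{-1}$ produces
\[
\E[L_n^q] = T^q \prod_{k=1}^n \E[(1-V_k)^q] = T^q (1+q)^{-n}.
\]
Taking expectations in the conditional bound then gives $\E[\Delta_n^p] \leq C T^q (1+q)^{-n}$ and analogously for $\Delta_n^\SB$, which is the claim.

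There is no real obstacle: the lemma is essentially a bookkeeping step that converts a small-time moment bound on the supremum into a geometric $L^p$-bound via the explicit law of the error in Theorem~\ref{thm:error} and the factorisation $L_n = T\prod_{k\le n}(1-V_k)$. The only point requiring care is to invoke Theorem~\ref{thm:error} in its conditional form so that the hypothesis, which holds only for deterministic $t \in [0,T]$, can be applied with $t = L_n$; this is justified by the independence of $Y$ and $L_n$ in the coupling of Subsection~\ref{subsec:coupling}.
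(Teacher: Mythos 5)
Your proposal is correct and follows essentially the same route as the paper: condition on $L_n$, apply the distributional identity of Theorem~\ref{thm:error} together with the hypothesis at $t=L_n$ (valid by the independence of $Y$ and $L_n$), and conclude via $\E[L_n^q]=T^q(1+q)^{-n}$. The only difference is cosmetic — you spell out the computation of $\E[L_n^q]$ from $L_n=T\prod_{k\le n}(1-V_k)$, which the paper simply invokes.
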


\begin{proof}
By assumption and~\eqref{eq:error-law} in Theorem~\ref{thm:error}, 
we have $\E[\Delta_n^p|L_n]=\E[\ov Y_{L_n}^p|L_n]\leq CL_n^q$ and 
thus $\E[\Delta_n^p]\leq\E[CL_n^q]=CT^q(1+q)^{-n}$. The result for 
$\Delta_n^\SB$ is analogously proven.
\end{proof}

\begin{proof}[Proof of Theorem~\ref{thm:Lp}]
(a) By Theorem~\ref{thm:error}, the errors $\delta_n$ and $|\delta_n^\SB|$ are 
both bounded by $L_n$. Since $\E[L_n^p]=T^p(1+p)^{n}$, the claim follows.\\
(b) By Corollary~\ref{cor:LevyMomBound}, we may apply Lemma~\ref{lem:small_t_to_Lp} to obtain
part (b) of the theorem. Indeed, 
\begin{equation}\label{eq:explicit_bounds}
\E[\Delta_n^p]\leq C_p(X)T^{\eta_p-1}\eta_p^{-n}\quad\big(\text{resp. }
\E\big[\big(\Delta_n^\SB\big)^p\big]\leq C_p^\ast(X)T^{\eta_p-1}\eta_p^{-n}\big),
\end{equation}
where $C_p(X)$ (resp. $C_p^\ast(X)$) is as in~\eqref{eq:C_pX} in 
Corollary~\ref{cor:LevyMomBound}.
\end{proof}

For $p\geq 1$, let $\|\cdot\|_p$ denote the $p$-norm on $\R^d$. 
The $L^p$-Wasserstein 
distance between distributions $\mu_x$ and $\mu_y$ on 
$\R^d$
is defined as 
\begin{equation}\label{eq:Lp_Wd}\mathcal{W}_p(\mu_x,\mu_y)=
\inf_{X\sim \mu_x,Y\sim\mu_y}\E[\|\mathcal{X}-\mathcal{Y}\|_p^p]^{1/p},
\end{equation}
where the infimum is taken over all couplings of $(\mathcal{X},\mathcal{Y})$, 
such that $\mathcal{X}$ and $\mathcal{Y}$ follow the laws $\mu_x$ and $\mu_y$, 
respectively.

\begin{proof}[Proof of Corollary~\ref{cor:Wp}]
Recall that 
the coupling of $(\chi,\chi_n)$ in Subsection~\ref{subsec:coupling} 
yields
$\chi-\chi_n=(0,\Delta^\SB_n,\delta^\SB_n)$
(cf. Theorem~\ref{thm:error} above). 
By Theorem~\ref{thm:Lp}(a), Equation~\eqref{eq:explicit_bounds} and the inequality $1+p\geq 2\geq\eta_p$ (since   
$p\geq 1$), 
we have
\[\E[\|\chi-\chi_n\|_p^p]=
\E[|\Delta_n^\SB|^p+|\delta_n^\SB|^p]\leq 
C_p^\ast(X)T^{\eta_p-1}\eta_p^{-n}+T^p(1+p)^{-n}\leq
(C_p^\ast(X)T^{\eta_p-1}+T^p)\eta_p^{-n}.\] 
Since for any coupling of $(\chi,\chi_n)$ we have 
$\mathcal{W}_p(\mathcal{L}(\chi),\mathcal{L}(\chi_n))\leq \E[\|\chi-\chi_n\|_p^p]^{1/p}$,
the $L^p$-Wasserstein distance is bounded by $C'\eta_p^{-n/p}$, 
where the constant takes the form
\begin{equation}\label{eq:Wp}
C'=(C_p^\ast(X)T^{\eta_p-1}+T^p)^{1/p},
\end{equation}
concluding the proof.
\end{proof}

\subsection{Proofs of Propositions~\ref{prop:Lipschitz},~\ref{prop:LocLip} and~\ref{prop:barrier}}
The following result 
about the  tail probabilities of $\Delta_n$
(defined in Theorem~\ref{thm:error})
is key in the proofs below. 

\begin{lem}\label{lem:bound} 
Let $X$ be a L\'evy process satisfying \ref{asm:D}. Fix $p>0$ and $T>0$. 
Let $C_p(Z)$ be the constant in~\eqref{eq:C_pX} of Corollary~\ref{cor:LevyMomBound} for the L\'evy process $Z=X-J^{2,1}$, 
where $J^{2,1}$ is the compound Poisson process in the L\'evy-It\^o 
decomposition of $X$ (see the paragraph preceding the proof of 
Lemma~\ref{lem:LevyMomBound}). Using the notation 
$\ov\nu(1)=\nu(\R\setminus(-1,1))$, for any $r,p>0$, we have 
\begin{eqnarray}
\P\big(\Delta_{n}\geq r\big) & \leq & \ov{\nu}(1)T2^{-n}
	+r^{-p}C_p(Z)T^{\eta_p-1}\eta_p^{-n},\label{eq:ErrorTailBound}\\
\E\big[\min\{\Delta_{n},r\}^p\big] & \leq & r^p\ov{\nu}(1)T2^{-n}
	+C_p(Z)T^{\eta_p-1}\eta_p^{-n}.\label{eq:ErrorTruncE}
\end{eqnarray}
\end{lem}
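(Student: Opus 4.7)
The plan is to reduce both inequalities to a single two-term estimate obtained from the L\'evy-It\^o decomposition at level $1$, separating the large-jump compound Poisson component from the rest. The reason this is useful is that the residual process $Z=X-J^{2,1}$ has L\'evy measure supported in $(-1,1)$, so $I_+^p(Z)=0$ holds trivially and Corollary~\ref{cor:LevyMomBound} applies to $Z$ for \emph{every} $p>0$, yielding $\E[\ov Z_t^p]\leq C_p(Z)\,t^{\eta_p-1}$ for all $t\in[0,T]$. Note that $Z$ and $X$ share the Brownian coefficient and the small-jump part, so they have the same $\alpha$, $\alpha_+$, $\beta_+$ and hence the same $\eta_p$.

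First, I would use Theorem~\ref{thm:error} to reduce both claims to bounds on $\ov Y_{L_n}$: conditionally on $L_n$, we have $\Delta_n\overset{d}{=}\ov Y_{L_n}$, where $Y\overset{d}{=}X$ is independent of the stick-breaking sequence $\ell$. It therefore suffices to derive, for each fixed $t\in[0,T]$,
\[
\P\bigl(\ov Y_t\geq r\bigr)\leq \ov\nu(1)\,t+r^{-p}C_p(Z)\,t^{\eta_p-1},\qquad
\E\bigl[\min\{\ov Y_t,r\}^p\bigr]\leq r^p\ov\nu(1)\,t+C_p(Z)\,t^{\eta_p-1},
\]
and then integrate in $L_n$ using $\E[L_n]=T 2^{-n}$ and $\E[L_n^{\eta_p-1}]=T^{\eta_p-1}\eta_p^{-n}$ (which is the identity from Theorem~\ref{thm:Lp}(a) extended to the exponent $\eta_p-1\geq 0$).

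Next, I would decompose $Y=Z+J^{2,1}$ with $Z$ and $J^{2,1}$ independent, and let $N^t$ be the number of jumps of $J^{2,1}$ in $[0,t]$; then $N^t$ is Poisson with mean $\ov\nu(1)\,t$, so $\P(N^t\geq 1)\leq \ov\nu(1)\,t$. On the event $\{N^t=0\}$, the process $J^{2,1}$ vanishes identically on $[0,t]$, hence $\ov Y_s=\ov Z_s$ for all $s\in[0,t]$. Splitting and using independence of $\ov Z_t$ and $N^t$,
\[
\P\bigl(\ov Y_t\geq r\bigr)\leq \P\bigl(\ov Z_t\geq r\bigr)+\P(N^t\geq 1),\qquad
\E\bigl[\min\{\ov Y_t,r\}^p\bigr]\leq \E[\ov Z_t^p]+r^p\P(N^t\geq 1).
\]
Applying Markov's inequality in the first display, the bound $\E[\ov Z_t^p]\leq C_p(Z)\,t^{\eta_p-1}$ from Corollary~\ref{cor:LevyMomBound} in both, and integrating against the law of $L_n$, delivers~\eqref{eq:ErrorTailBound} and~\eqref{eq:ErrorTruncE}.

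There is no real conceptual obstacle; the proof is essentially a bookkeeping exercise. The one point that requires a moment's care is verifying that Corollary~\ref{cor:LevyMomBound} is applicable to $Z$ for every $p>0$ without any integrability assumption on $\nu$, which is immediate since $Z$ has bounded jumps and hence finite exponential moments of all orders. The only slightly delicate matching is the exponent: since $Z$ and $X$ share $(\sigma^2,\nu|_{(-1,1)})$ and a drift, the Blumenthal--Getoor index and the index $\alpha$ defined in~\eqref{eq:alpha} coincide for $Z$ and $X$, so the same $\eta_p$ governs both sides of the inequality, which is what makes the final answer clean.
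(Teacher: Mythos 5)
Your proposal is correct and follows essentially the same route as the paper: condition on $L_n$ via Theorem~\ref{thm:error}, split on the event that $J^{2,1}$ has no jump in $[0,t]$ (probability of a jump bounded by $\ov\nu(1)t$), apply Corollary~\ref{cor:LevyMomBound} to $Z=X-J^{2,1}$ (noting, as you rightly do, that $Z$ shares $\eta_p$ with $X$ and has trivially finite moments), and integrate using $\E[L_n^{\eta_p-1}]=T^{\eta_p-1}\eta_p^{-n}$. The only cosmetic difference is that the paper first reduces the tail bound \eqref{eq:ErrorTailBound} to the truncated-moment bound \eqref{eq:ErrorTruncE} by Markov's inequality, whereas you prove both directly from the same split; this is immaterial.
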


\begin{proof}
Since 
$ \P\big(\Delta_{n}\geq r\big)=\P\big(\min\{\Delta_{n},r\}^p\geq r^p\big) \leq \E\big[\min\{\Delta_{n},r\}^p\big]/r^p $
by Markov's inequality, 
we only need to prove~\eqref{eq:ErrorTruncE}.

Let $Y$ be as in Theorem~\ref{thm:error}. Pick any $t>0$.
Let $A$ be the event on which $J^{2,1}$ does not have a jump on the interval 
$[0,t]$. Then $\P(A)=e^{-\ov\nu(1)t}\leq 1- \ov\nu(1)t$, or equivalently 
$P(A^c)\leq \ov{\nu}(1)t$. By Corollary~\ref{cor:LevyMomBound} applied to 
$Z$ we have $\E\big[\ov{Z}_t^p\big]\leq C_p(Z)t^{\eta_p-1}$. 
Since $\ov X_t=\ov Z_t$ a.s. on the event $A$ we get $\min\{\ov X_t,r\}^p 
\leq r^p\cdot \mathbbm{1}_{A^c} +\ov Z_t^p \cdot \mathbbm{1}_A\leq r^p\cdot \mathbbm{1}_{A^c} +\ov Z_t^p $,
implying
\[
\E\left[ \min\{\ov X_t,r\}^p\right]
\leq r^p \ov{\nu}(1)t + C_p(Z)t^{\eta_p-1}.
\]
This inequality,  Theorem~\ref{thm:error}, $\E [L_n]=T 2^{-n}$ and the equality in law $X\overset{d}{=}Y$ 
imply~\eqref{eq:ErrorTruncE}:
$\E\big[\min\{\Delta_n,r\}^p\big] =  
	\E \left[\E\big[\min\{\ov Y_{L_n},r\}^p|L_n\big]\right]
\leq \E[r^p \ov{\nu}(1)L_n + C_p(Z)L_n^{\eta_p-1}]$.
\end{proof}

\begin{proof}[Proof of Proposition~\ref{prop:Lipschitz}]
\label{proof:Lip}
Assume first $\|g\|_\infty<\infty$. Since $\min\{a+b,c\}\leq\min\{a,c\}+b$ for 
all $a,b,c\geq0$, we have  
\[|g(x,y,t)-g(x,y^{\prime},t^{\prime})|
\leq\min\{K|y-y^{\prime}|,\|2g\|_\infty\}+K|t-t'|.\]
Recall that the output of~\nameref{alg:SBA} is a copy of $\chi_n$.
Since, by Theorem~\ref{thm:error}, we a.s. have 
$0\leq\Delta^\SB_n\leq\Delta_n$ 
and $|\delta_n^\SB|\leq L_n$, by~\eqref{eq:P_mean} and~\eqref{eq:ErrorTruncE} we obtain
\[\begin{split}
\E[|g(\chi)-g(\chi_n)|^p] & \leq 2^{(p-1)^+}\big(
\E[K^p\min\{\Delta_n,\|2g\|_\infty/K\}^p]+K^p\E[L_n^p]\big)\\
&\leq 2^{(p-1)^+}\big[\|2g\|_\infty^p\ov\nu(1)T2^{-n}+
K^p(C_p(Z)T^{\eta_p-1}\eta_p^{-n} + T^p(1+p)^{-n})\big],
\end{split}
\]
where $Z=X-J^{2,1}$. Now assume that $\min\{I_+^p,I_-^p\}<\infty$. 
Then, again by Theorems~\ref{thm:error}~\&~\ref{thm:Lp} and 
Equation~\eqref{eq:explicit_bounds}, we obtain
\[\begin{split}
\E[|g(\chi)-g(\chi_n)|^p] 
& \leq 2^{(p-1)^+}K^p(\E[\Delta_n^p]+\E[L_{n}^p])\\
& \leq 2^{(p-1)^+}K^p(C_p^\ast(X)T^{\eta_p-1}\eta_p^{-n}+T^p(1+p)^{-n}).
\end{split}\]
Since $\eta_p\leq 2\leq 1+p$ for $p\geq 1$, this yields the result: 
$\E[|g(\chi)-g(\chi_n)|]\leq C'\eta_p^{-n}$ for
\begin{equation}\label{eq:LipBound}
C'=2^{(p-1)^+}
\begin{cases}
\|2g\|_\infty^p\ov\nu(1)T+K^p(C_p(Z)T^{\eta_p-1} + T^p), & \|g\|_\infty<\infty,\\
K^p(C_p^\ast(X)T^{\eta_p-1}+T^p),& \|g\|_\infty=\infty.
\end{cases}
\end{equation}
The proof is thus complete. 
\end{proof}

\begin{proof}[Proof of Proposition~\ref{prop:LocLip}]
\label{proof:LocLip}
Recall that the second component of $\chi_n$ (resp. $\chi$) equals 
$\ov X_T -\Delta_n^\SB$ (resp. $\ov X_T$). Recall from Theorem~\ref{thm:error} 
that $|\delta_n^\SB|\leq L_n$. Since $0\leq\Delta^\SB_n\leq\Delta_n$, the 
locally Lipschitz property of $g$ implies:
\[ |g(\chi)-g(\chi_n)|\leq K (\Delta_n+L_n)e^{\lambda\ov{X}_T}.\]
From the definition of $q'$ we get $1/q' + 1/q=1$. Thus H\"older's 
inequality gives:
\begin{equation}\label{eq:prod_Exp}
\E\big[|g(\chi)-g(\chi_n)|^p\big]
\leq K^p \E\big[\big(\Delta_n+L_n\big)^{p q'}\big]^{\frac{1}{q'}}
\E\left[e^{\lambda pq\ov{X}_{T}}\right]^{\frac{1}{q}},
\end{equation}
where the second expectation on the right-hand side of~\eqref{eq:prod_Exp} is finite by assumption 
$E^{\lambda pq}_+<\infty$ and the argument in the first paragraph of 
Subsection~\ref{subsec:proofs-Lp} above.

We now estimate both expectations on the right-hand side of~\eqref{eq:prod_Exp}. 
Note that $I_+^r<\infty$ for all $r>0$ as $E^{\lambda pq}_+<\infty$. 
By~\eqref{eq:P_mean}, we have $\E\big[\big(\Delta_n+L_n\big)^{pq'}\big]
\leq 2^{(pq'-1)^+}\E\big[\Delta_n^{pq'}+L_n^{pq'}\big]$. Hence
Theorem~\ref{thm:Lp},~\eqref{eq:explicit_bounds} 
and the inequality 
$(x+y)^{1/q'}\leq x^{1/q'}+y^{1/q'}$ for $x,y\geq 0$ 
imply
\[ 
	\begin{split}	
		\E\big[\big(\Delta_n+L_n\big)^{pq'}\big]^{1/q'} & 
\leq 2^{(p-1/q')^+}
\big(C_{pq'}(X)T^{\eta_{pq'}-1}\eta_{pq'}^{-n}+T^{pq'}(1+pq')^{-n}\big)^{1/q'} \\
& \leq 2^{(p-1/q')^+}
\big(C_{pq'}(X)^{1/q'}T^{(\eta_{pq'}-1)/q'}\eta_{pq'}^{-n/q'}+T^{p}(1+pq')^{-n/q'}\big). 
	\end{split}	
\]

It remains to obtain an explicit bound for the expectation 
$\E[\exp(\lambda pq\ov{X}_T)]$. By removing all jumps 
smaller than $-1$ from $X$, we obtain a L\'evy process $Z$ 
with triplet $(\sigma^2, \nu|_{[-1,\infty)},b)$ that dominates
$X$ path-wise. 
Set $Z_t^\ast=\sup_{s\in [0,t]}|Z_s|$ 
and note
$Z_T^\ast\geq \ov{Z}_T\geq \ov{X}_T$.
Define the function $h:x\mapsto e^{\lambda pqx}-1$ on $\R$. Then, 
for any $c>0$, 
by Fubini's theorem  we have
\[\begin{split}\E[h(Z_T^\ast-c)]& \leq \E[h(Z_T^\ast-c)\1{Z_T^\ast>c}]
= \int_c^\infty \P(Z_T^\ast>z)h'(z-c)dz \\
&=\int_0^\infty \P(Z_T^\ast>z+c)h'(z)dz 
		\leq \int_0^\infty \frac{\P(|Z_T|>z)}{\P[Z_T^\ast\leq c/2]}h'(z)dz 
= \frac{\E[h(|Z_T|)]}{\P[Z_T^\ast\leq c/2]},
\end{split}\] 
where the second inequality holds by~\cite[p.~167,~Eq.~(25.15)]{MR3185174}.
Hence, we get
\begin{equation}	
\label{eq:Exp_bound}
	\E\left[e^{\lambda pq\ov{X}_T}\right]
\leq \E\left[e^{\lambda pqZ^*_T}\right]
= e^{\lambda pq c}\E[1+h(Z_T^\ast-c)]
\leq e^{\lambda pq c}\left(1+
\frac{\E\left[e^{\lambda pq|Z_T|}\right]-1}{
\P[Z_T^\ast\leq c/2]}\right).
	\end{equation}	
Using the L\'evy-Khintchine formula~\cite[Thm~25.17]{MR3185174} for the L\'evy process $Z$
we get 
$$\E[e^{\lambda pq|Z_T|}]\leq \E[e^{\lambda pqZ_T}]+
\E[e^{-\lambda pqZ_T}]=e^{T\Psi_Z(\lambda pq)}+e^{T\Psi_Z(-\lambda pq)},$$ 
where $\Psi_Z(u)=bu+\sigma^2u^2/2+\int_{[-1,\infty)}(e^{ux}-1-ux\1{x<1})\nu(dx)$ for $u\in(-\infty,\lambda p q]$. 
Markov's inequality implies
$\P[Z_T^\ast\leq c/2]\geq 1-(2/c)\E[Z_T^\ast]$.
Moreover, by Lemma~\ref{lem:LevyMomBound}, we have 
$$\E[Z_T^\ast]\leq
\E\left[\ov{Z}_T-\inf_{s\in[0,T]}Z_s\right]\leq m^\1Z(T)+m^\1{-Z}(T).$$
Hence, from~\eqref{eq:Exp_bound}, for any $c>(m^\1Z(T)+m^\1{-Z}(T))/2$ we get
\begin{equation*}
\E\left[e^{\lambda pq\ov{X}_T}\right]
\leq e^{\lambda pq c}\left(1+
\frac{e^{T\Psi_Z(\lambda pq)}+e^{T\Psi_Z(-\lambda pq)}-1}{
1-\frac{2}{c}(m^\1Z(T)+m^\1{-Z}(T))}\right).
\end{equation*}
Therefore, using~\eqref{eq:prod_Exp} and the inequalities
$\eta_{pq'}\leq 2\leq 1+pq'$ (as $pq'\geq 1$), 
we obtain the bound 
$\E\big[|g(\chi)-g(\chi_n)|^p\big]\leq C'\eta_{pq'}^{-n/q'}$, where
\begin{equation}\label{eq:LocLipBound}
C'= \frac{C_{pq'}(X)^{1/q'}T^{(\eta_{pq'}-1)/q'}+T^p}
{2^{-(p-1/q')^+}K^{-p}e^{-\lambda p c}}
\bigg(1+\frac{e^{T\Psi_Z(\lambda pq)}+e^{T\Psi_Z(-\lambda pq)}-1}
{1-\frac{2}{c}(m^\1Z(T)+m^\1{-Z}(T))}\bigg)^{1/q},
\end{equation}
the constant $C_{pq'}(X)$ is defined in~\eqref{eq:C_pX} and
$m^\1{Z}(T)$ and $m^\1{-Z}(T)$ are given in Lemma~\ref{lem:LevyMomBound}.
\end{proof}

\begin{rem}\label{rem:LocLip}
The rate $\eta_{pq'}^{-1/q'}$ in the bound of Proposition~\ref{prop:LocLip}
is smallest (as a function of $q$) for the largest $q$ satisfying the exponential moment condition
in Proposition~\ref{prop:LocLip}. Indeed,
let $r=pq'$ and note that, since $p$ is fixed, 
minimising $\eta_{pq'}^{-1/q'}$ in $q$ 
is equivalent to maximising $\eta_r^{1/r}$ in $r$. 
By~\eqref{eq:eta}, 
the function $r\mapsto \eta_r^{1/r}$ 
is decreasing and hence takes its maximal value 
at the smallest possible $r$
(i.e. largest possible $q$). 
\end{rem}

\begin{proof}[Proof of Proposition~\ref{prop:barrier}]
\label{proof:barrier}
Recall from Theorem~\ref{thm:error} that 
$0\leq\Delta^\SB_n\leq\Delta_n$. Let $\epsilon_n=
\eta_q^{-n/(\gamma+q)}$ and note 
\[\begin{split}
\E \left[\frac{|h(X_{t})|^p}{\| h\|_{\infty}^p}
\big|\1{\ov X_T-\Delta_n^\SB\leq x}-\1{\ov X_T\leq x}\big|^p\right] 
&\le \P(\ov X_T-\Delta_n^\SB\leq x<\ov{X}_T)\\
&\le \P(\ov X_T-\Delta_n\leq x<\ov{X}_T)\\
& = \P(\ov X_T-\Delta_n\leq x<\ov{X}_T-\epsilon_n)\\
&\qquad + \P(\ov X_T-\Delta_n\leq x<\ov{X}_T\leq x+\epsilon_n)\\
& \leq \P(\epsilon_n<\Delta_n)+ 
\P(x<\ov{X}_T\leq x+\epsilon_n).
\end{split}\]
By~\eqref{eq:ErrorTailBound} in Lemma~\ref{lem:bound} 
we have
\[\P(\epsilon_{n}<\Delta_n)\leq 
\ov{\nu}(1)T2^{-n}+\epsilon_n^{-q}C_q(Z)T^{\eta_q-1}\eta_q^{-n}
=\ov{\nu}(1)T2^{-n}+C_q(Z)T^{\eta_q-1}\eta_q^{-n\gamma/(\gamma+q)}.\]
The assumed H\"older continuity of the distribution function of $\ov X_T$ in 
Assumption~\ref{asm:H} implies that $\P(x<\ov{X}_T\leq x+\epsilon_n)
\leq K\epsilon_n^\gamma$. Given the formula for $C_q(Z)$ in~\eqref{eq:C_pX}, 
the constant 
\begin{equation}\label{eq:barrierBound}
C'=\|h\|_\infty^p (\ov\nu(1)T+C_q(Z)T^{\eta_q-1}+K),
\end{equation}
is explicit and satisfies $\E[|g(\chi)-g(\chi_n)|^p]\leq 
C' \eta_q^{-n\gamma/(\gamma+q)}$.
\end{proof}

\begin{rem}\label{rem:optimal_q}
Minimising the rate $\eta_q^{-\gamma/(\gamma+q)}$ as a function of $q$
in Proposition~\ref{prop:barrier} is somewhat involved. 
On the interval $(\alpha_+,\infty)$, the rate
$q\mapsto \eta_q^{-\gamma/(\gamma+q)}=2^{-\gamma/(\gamma+q)}$ is strictly increasing, so the optimal $q$ always lies in $(0,\alpha_+]$. On the interval $(0,\alpha_+]$
the problem is equivalent to maximising the map 
$r\mapsto e^{f(r)}=\eta_q^{\gamma/(\gamma+q)}$ on the interval $(0,1]$, where $r=\frac{q}{\alpha_+}\in(0,1]$ 
and $f:x\mapsto\log(1+x)/\big(1+\frac{\alpha_+}{\gamma}x\big)$. 
Since 
\[
\frac{\gamma}{\alpha_+}
	\Big(1+\frac{\alpha_+}{\gamma}x\Big)^2\frac{d}{dx}f(x)
=\frac{\frac{\gamma}{\alpha_+}-1}{1+x}-(\log(1+x)-1),
\] 
the critical point of $f$, obtained by solving for $s=\log(1+x)-1$ in 
$se^s=e^{-1}(\frac{\gamma}{\alpha_+}-1)$, is given by 
$r_0=e^{W(e^{-1}(\gamma/\alpha_+-1))+1}-1$, 
where $W$ is the Lambert $W$ function, defined as the inverse of $x\mapsto xe^x$. 
Since $f$ is increasing on $[0,r_0]$ and decreasing on $(r_0,\infty)$, then 
$r=\min\{r_0,1\}$ maximises $f|_{(0,1]}$, implying that the optimal $q$ equals 
\[q=\alpha_+\min\Big\{1,e^{W(e^{-1}(\gamma/\alpha_+-1))+1}-1\Big\}.\] 
In particular, the choice $q=\alpha_+$ is optimal if and only if 
$\gamma/\alpha_+\geq 2\log(2)-1=0.38629\ldots$, 
and leads to the bound $\O(2^{-n/(1+\alpha_+/\gamma)})$. Hence, if $\gamma=1$, the best 
bound in Proposition~\ref{prop:barrier} is $\O(2^{-n/(1+\alpha_+)})$. 
\end{rem}

\subsection{The proof of the central limit theorem}
\begin{proof}[Proof of Theorem~\ref{thm:CLT}]
\label{proof:CLT}
Recall $n_N=\lceil \log N/ \log(\eta_g^{2})\rceil$ and note that 
$1\geq\sqrt{N}\eta_g^{-n_N}\geq\eta_g^{-1}$. Hence Assumption~(b) yields 
\begin{equation}
\label{eq:Deterministic_remainder}
\sqrt{N} \E \Delta_{n_N,N}^g\to 0\qquad \text{as }N\to\infty.
\end{equation}

The coupling in Subsection~\ref{subsec:coupling}, used in Theorem~\ref{thm:error}, 
implies that for all $n\in\N$ the following relations between $\chi$ and the 
SBA $\chi_n$ in~\eqref{eq:SBA} hold a.s.: $Y_T=X_T$, $\ov X_T-\Delta_n^\SB\leq 
\ov X_T$ and $\tau_T-\delta_n^\SB\leq T$. Hence parts~(i) and~(ii) of 
Assumption~(a) imply that $g(\chi_n)$ and $g(\chi_n)^2$ are dominated by $\zeta
=G(X_T,\ov X_T,T)$ and $\zeta^2$, respectively. Since $\zeta$ and $\zeta^2$ are 
integrable by assumption, the dominated convergence theorem yields 
\begin{equation}\label{eq:Var_conv}
\V[g(\chi_n)]=\E[g(\chi_n)^2]-[\E g(\chi_n)]^2\to 
\E[g(\chi)^2]-[\E g(\chi)]^2=\V[g(\chi)]\qquad\text{as $n\to\infty$.}
\end{equation}

Recall that $(\chi^i_n)_{i\in\{1,\ldots,N\}}$ is the output produced by $N$ 
independent runs of~\nameref{alg:SBA} using $n$ steps. Define the 
normalised centred random variables 
\[
\zeta_{i,N}=\left(g\big(\chi^i_{n_N}\big)-
\E g\big(\chi^i_{n_N}\big)\right)/\sqrt{N \V[g(\chi)]}, 
\quad\text{where }i\in\{1,\ldots,N\}.
\]
Hence~\eqref{eq:Var_conv} implies $\sum_{i=1}^N\E \zeta_{i,N}^2
= \V[g(\chi)]^{-1} (1/N)\sum_{i=1}^N \V [g(\chi^i_{n_N})]\to 1$ as $N\to\infty$. 
Moreover, we have 
\[
\sum_{i=1}^N \zeta_{i,N} = \sqrt{N/\V[g(\chi)]} \Delta^g_{n_N,N}+o(1) 
\qquad \text{as }N\to\infty,
\]
where $o(1)$ is a deterministic sequence, proportional to the one 
in~\eqref{eq:Deterministic_remainder}. Hence, \eqref{eq:CLT} holds if and only 
if $\sum_{i=1}^N\zeta_{i,N}\overset{d}{\to} N(0,1)$ as $N\to\infty$. 

To conclude the proof, we shall use Lindeberg's CLT~\cite[Thm~5.12]{MR1876169}, 
for which it remains to prove that Lindeberg's condition holds, i.e. 
$\sum_{i=1}^N\E[\zeta_{i,N}^2\1{\zeta_{i,N}>r}]\to0$ as $N\to\infty$ for all 
$r>0$. By the coupling from the second paragraph of this proof, we find 
$|g(\chi_n^i)|\leq |\zeta_i|$ for all $i\in\{1,\ldots,N\}$ and $n\in\N$, where 
$(\zeta_i)_{i\in\{1,\ldots,N\}}$ are iid with the law equal to $G(X_T,\ov 
X_T,T)$. Crucially, $\zeta_i$ does not depend on the number of steps $n_N$ in 
the~\nameref{alg:SBA}. Moreover, note that iid random variables 
$\xi_i=(|\zeta_i| +\E |\zeta_i|)$ satisfy $\E \xi_i^2 <\infty$ and 
$|\zeta_{i,N}|\leq \xi_i/\sqrt{N \V[g(\chi)]}$ for any $i\in\{1,\ldots,N\}$.
Hence we find
\[\V [g(\chi)]\sum_{i=1}^N \E [\zeta_{i,N}^2\1{\zeta_{i,N}>r}]\leq
\sum_{i=1}^N\frac{1}{N} \E \left[\xi_i^2 \1{\xi_i>rN\V (g(\chi))}\right]=
\E \left[\xi_1^2 \1{\xi_1>rN\V (g(\chi))}\right]\to0\]
as $N\to\infty$, implying Lindeberg's condition and our theorem. 
\end{proof}

\begin{rem}\label{rem:CLT-G}
Identifying the appropriate $G$ in Theorem~\ref{thm:CLT} is usually simple. 
For instance, the following choices of $G$  can be made in the contexts of interest. 
\begin{itemize}
\item[(a)] Let $g$ be Lipschitz (as in Proposition~\ref{prop:Lipschitz}). 
Then we can take 
\begin{itemize}
\item[(i)] $G(x,y,t)=\left\Vert g\right\Vert_\infty$, if $\left\Vert g\right\Vert_\infty<\infty$; 
\item[(ii)] $G(x,y,t)=|g(x,y,t)|+2K(y+t)$, if $I_+^2<\infty$. 
\end{itemize}
\item[(b)] Let $g$ be locally Lipschitz with the Lipschitz constant exponentially increasing at rate $\lambda>0$ (as in Proposition~\ref{prop:LocLip}).
Then we can take 
\begin{itemize}
\item[(i)] $G(x,y,t)=Ke^{\lambda y}$, if $g(x,y,t)\leq Ke^{\lambda y}$ and $E_+^{2\lambda}<\infty$ (lookback and hindsight options fall in this category);
\item[(ii)] $G(x,y,t)=|g(x,y,t)|+2K(y+t)e^{\lambda y}$ if $E_+^{2\lambda q}<\infty$ for some $q>1$. 
\end{itemize}
\item[(c)] If $g$ is a barrier option (as in Proposition~\ref{prop:barrier}), then take $G(x,y,t)=\left\Vert g\right\Vert_\infty$.
\end{itemize}
\end{rem}

\begin{rem}
\label{rem:CLT_fixed_n}
If we are prepared to centre, it is possible to apply the standard 
iid CLT to the estimator based on~\nameref{alg:SBA}.
Indeed, for fixed $n$, assuming  $\V[P_n]<\infty$ where 
$P_n=g(\chi_n)$, the classical CLT yields 
\[
\frac{1}{\sqrt{N\V[P_n]}}\sum_{i=1}^N (P_n^i-\E P_n)\overset{d}{\to}N(0,1)\qquad\text{as $N\to\infty$.}
\]
In contrast, the gist of Theorem~\ref{thm:CLT} is that one need not centre 
the sample with a function of $n$, which itself depends on the sample. 
\end{rem}

\appendix

\section{
MLMC and the debiasing}

\subsection{$\O$ and $o$}\label{sec:O_o}
The following standard notation is used throughout the paper: for functions $f,g:\N\to(0,\infty)$  
we write 
$f(n)=\O(g(n))$ (resp. $f(n)=o(g(n))$) 
as $n\to\infty$ 
if 
$\limsup_{n\to\infty}f(n)/g(n)$ is finite (resp. $0$).
Put differently, $f(n)=\O(g(n))$ is equivalent to $f(n)$ being bounded above by $C_0 g(n)$
for some constant $C_0>0$ and all $n\in\N$. In particular, $f(n)=\O(g(n))$ does \textit{not} imply that $f$ and $g$ decay at the same rate.
We also write 
$f(\epsilon)=\O(g(\epsilon))$ (resp. $f(\epsilon)=o(g(\epsilon))$) as 
$\epsilon\downarrow0$, for functions $f,g:(0,\infty)\to(0,\infty)$ if
$\limsup_{\epsilon\downarrow0}f(\epsilon)/g(\epsilon)$ is finite (resp. $0$).

\subsection{ML}\label{sec:MLMC_1}
We start by recalling a version of~\cite[Thm~1]{MR2835612}.

\begin{thm}\label{thm:MLMC}
Consider a family of square integrable random variables $P,P_1,P_2,\ldots$ and $P_0=0$. 
Let $\{D_k^i\}_{k,i\in\N}$ be independent with 
$D_k^i\overset{d}{=}P_k-P_{k-1}$ for all $k,i\in\N$. 
Assume that for some $q_1\geq (q_2\wedge q_3)/2>0$ and all $n\in\N$ we have
\begin{itemize}
\item[\normalfont(a)] $|\E P-\E P_n|\leq c_1 2^{-n q_1}$,
\item[\normalfont(b)] $\V [P_{n+1}-P_n]\leq c_2 2^{-n q_2}$, 
\item[\normalfont(c)] the expected computational cost $\C(n)$ of constructing a single 
sample of $(P_n,P_{n-1})$ is bounded by $c_3 2^{n q_3}$,
\end{itemize}
where $c_1,c_2,c_3$ are positive constants.
Then for every $\epsilon>0$ there exist $n,N_1,\ldots,N_n\in\N$ (see Remark~\ref{rem:MLMC}(i) below for explicit formulae) such that
the estimator
\begin{equation}\label{eq:MLMC_estim}
\hat{P} = \sum_{k=1}^n \frac{1}{N_k}
\sum_{i=1}^{N_k}D_k^i\quad \text{is $L^2$-accurate at level $\epsilon$,}\quad
\E \big[(\hat{P} - \E P)^2\big]<\epsilon^2,
\end{equation}
and the computational complexity is of order
\[\C_{ML} (\epsilon)=\begin{cases}
\O(\epsilon^{-2}) &\text{ if }q_2>q_3,\\
\O(\epsilon^{-2}\log^2\epsilon) &\text{ if }q_2=q_3,\\
\O(\epsilon^{-2-(q_3-q_2)/q_1}) &\text{ if }q_2<q_3.
\end{cases}
\]
\end{thm}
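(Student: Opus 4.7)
The plan is to follow the classical Giles MLMC template: decompose the mean squared error into bias and variance components, pick the depth $n$ to kill the bias, then minimise the total computational cost subject to a variance constraint by optimally allocating the per-level sample sizes $N_k$. Since $\hat P$ is an unbiased estimator of $\E P_n=\sum_{k=1}^n\E[P_k-P_{k-1}]$, the MSE splits cleanly as
\begin{equation*}
\E\big[(\hat P-\E P)^2\big]=(\E P-\E P_n)^2+\sum_{k=1}^n\frac{\V[P_k-P_{k-1}]}{N_k}.
\end{equation*}

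First I would choose $n$ so that assumption~(a) forces the squared bias below $\epsilon^2/2$; concretely, any $n\geq\lceil\log_2(\sqrt 2\,c_1/\epsilon)/q_1\rceil$ works, giving $n=\Theta(\log(1/\epsilon)/q_1)$. With this $n$ fixed, the remaining task is to choose $N_1,\ldots,N_n\in\N$ minimising the expected cost $\sum_{k=1}^n N_k\,\C(k)\leq c_3\sum_{k=1}^n N_k 2^{kq_3}$ subject to $\sum_{k=1}^n V_k/N_k\leq\epsilon^2/2$, where by~(b) we may use $V_k=c_2 2^{-(k-1)q_2}$. A standard Lagrange-multiplier calculation (treating $N_k$ as continuous) yields the optimum $N_k\propto\sqrt{V_k/\C(k)}\propto 2^{-k(q_2+q_3)/2}$, and then fixing the Lagrange multiplier from the variance constraint gives the explicit prescription
\begin{equation*}
N_k=\Big\lceil 2\epsilon^{-2}\sqrt{V_k/\C(k)}\,S_n\Big\rceil,\qquad S_n=\sum_{j=1}^n\sqrt{V_j\,\C(j)},
\end{equation*}
which is the formula I expect the referenced Remark to display. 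By construction the variance is at most $\epsilon^2/2$, so combined with the bias bound the MSE does not exceed $\epsilon^2$.

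It then remains to evaluate the cost $\C_{\mathrm{MLMC}}(\epsilon)\leq\sum_{k=1}^n (N_k+1)\,c_3 2^{kq_3}$ with this choice. Using $V_k\C(k)\propto 2^{k(q_3-q_2)}$, the quantity $S_n$ is bounded by (i) a constant when $q_2>q_3$, (ii) $\O(n)=\O(\log(1/\epsilon))$ when $q_2=q_3$, and (iii) $\O(2^{n(q_3-q_2)/2})$ when $q_2<q_3$. Substituting each case into $\sum_k N_k\C(k)\leq 2\epsilon^{-2}S_n^2+c_3\sum_k 2^{kq_3}$ and using $n\asymp\log_2(1/\epsilon)/q_1$ to convert the remaining factor $2^{nq_3}$ (appearing when $q_3\geq q_2$) into a polynomial in $\epsilon^{-1}$ gives the three advertised complexity regimes $\O(\epsilon^{-2})$, $\O(\epsilon^{-2}\log^2\epsilon)$, and $\O(\epsilon^{-2-(q_3-q_2)/q_1})$, respectively.

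The routine algebra is the case split on $\mathrm{sgn}(q_2-q_3)$ for the geometric sums $S_n$ and the unavoidable residual term $\sum_k\C(k)$ coming from the ceilings in the $N_k$. The main obstacle, and where care is required, is verifying that the ceiling contribution $\sum_{k=1}^n\C(k)=\O(2^{nq_3})$ is dominated by (or merges with) the leading term in each regime: this requires the hypothesis $q_1\geq(q_2\wedge q_3)/2$, which ensures that the bias-driven depth $n\asymp\log(1/\epsilon)/q_1$ is large enough that $2^{nq_3}$ is absorbed into the stated bound. Once this is checked, the three cases are immediate.
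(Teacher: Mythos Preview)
Your proof is correct and follows exactly the classical Giles MLMC argument. However, the paper does not actually prove this theorem: it is stated in the appendix as a recalled result from~\citep[Thm~1]{MR2835612}, with no proof given beyond citing that reference. The explicit formulae you derive for the depth $n$ and the per-level sample sizes $N_k$ are precisely those recorded in Remark~\ref{rem:MLMC}(i), so your argument reproduces the cited proof rather than an alternative one.

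One minor point worth noting: your check that the ceiling residual $\sum_{k=1}^n\C(k)=\O(2^{nq_3})$ is absorbed in each regime is exactly where the hypothesis $q_1\geq(q_2\wedge q_3)/2$ enters, and you identify this correctly. In the case $q_2>q_3$ you need $q_3/q_1\leq 2$, i.e.\ $q_1\geq q_3/2=(q_2\wedge q_3)/2$; in the case $q_2<q_3$ you need $q_2/q_1\leq 2$, i.e.\ $q_1\geq q_2/2=(q_2\wedge q_3)/2$; and the boundary case $q_2=q_3$ coincides with the first. So the hypothesis is sharp for this line of argument, and your sketch handles it cleanly.
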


\begin{rem}
\label{rem:MLMC}
(i) In~\cite{MR2835612}, the number of levels equals
$n = \lceil \log_2(\sqrt{2}c_1\epsilon^{-1})/q_1 \rceil$ and the number of samples at level 
for $k\in \{1,\ldots,n\}$
is
\begin{equation}\label{eq:MLMC_const}
N_k=
\begin{cases}
\lceil 2c_2\epsilon^{-2}2^{-(q_2+q_3)k/2}/(1-2^{-(q_2-q_3)/2}) \rceil &\text{ if }q_2>q_3,\\
\lceil 2c_2\epsilon^{-2}n2^{-q_3k} \rceil &\text{ if }q_2=q_3,\\
\lceil 2c_2\epsilon^{-2}2^{n(q_3-q_2)/2-(q_2+q_3)k/2}/(1-2^{-(q_3-q_2)/2}) \rceil &\text{ if }q_2<q_3.
\end{cases}
\end{equation}
Clearly, the number of levels $n$ is obtained from the bound on the bias in Assumption~(a), while the number of samples~\eqref{eq:MLMC_const} at levels 
$k\in\{1,\ldots,n\}$ are obtained from a simple constrained optimisation using 
the bounds on the variances and computational costs. In practice, if one has 
no access to the constants involved in the bounds in Assumptions~(a)--(c), 
one estimates them via Monte Carlo simulation for small $n$. In the setting 
of this paper this is the case for barrier options, see 
Proposition~\ref{prop:barrier} and the paragraphs succeeding it.\\
\noindent (ii) The coupling $(P_n,P_{n-1})$ that can be simulated, implicit in
Assumptions~(b) and~(c) of Theorem~\ref{thm:MLMC}, constitutes a crucial
extension of any MC algorithm necessary for an MLMC estimator to be define. It 
is clear from~(b) that a trivial independent coupling is undesirable in this 
context. In fact, typically, the optimal coupling (the one where 
$\V[P_{n+1}-P_n]$ equals the $L^2$-Wasserstein distance between the laws of 
$P_n-\E P_n$ and $P_{n+1}-\E P_{n+1}$, cf.~\eqref{eq:Lp_Wd} above) is very 
expensive (resp. impossible) to simulate, making the bound in~(c) very large 
(resp. infeasible). Hence a ``compromise'' coupling is needed. This is,
however, not the case for the problems analysed in this paper as the cost
scales only linearly in $n$. In contrast, Assumption (a) requires no specific 
coupling since $|\E P_n -\E P|$ only compares $P$ and $P_n$ through their means. 
Thus, $q_1$ may be computed using the optimal coupling, even if unavailable for 
simulation.
\end{rem}

\subsection{The debiasing}\label{sec:MLMC_2}
A certain random selection of the variables $\{D^k_n\}_{n,k\in\N}$
in Theorem~\ref{thm:MLMC} leads to an unbiased estimator for $\E P$ 
(see~\cite{MR2890424,MR3422533}). More precisely,
following~\cite[Thm~7]{MR3782809}, define the estimator
\begin{equation}\label{eq:Matti_estimator}
\hat{P}=\sum_{k=1}^\infty\frac{1}{\E N_k}\sum_{n=1}^{N_k}D_k^n,
\end{equation}
where the sequence of nonnegative random integers $(N_k)_{k\in\N}$, independent 
of $\{D_n^k\}_{n,k\in\N}$, satisfies $\E N_k>0$ for all $k\in\N$ and  
$\sum_{k=1}^\infty N_k<\infty$, i.e. $N_k=0$ for all sufficiently large indices. 
The sequence $(N_k)_{k\in\N}$ can be constructed as a deterministic functional 
of a finite sample of positive integers $(R_j)_{j=1}^N$ as follows:
(a) \emph{single term estimator} (STE): $N_k=\sum_{j=1}^N \1{R_j=k}$; and
(b) \emph{independent sum estimator} (ISE): $N_k=\sum_{j=1}^N\1{R_j\geq k}$ (see~\cite[Thms~3~\&~5]{MR3782809}). For instance, one may take 
$(R_n)_{n=1}^N$ to be iid with common distribution $p_n=\P[R=n]>0$, $n\in\N$. 
The computational complexities of STE and ISE are linked with the optimal choice 
for the law of $R$~\cite[Sec.~6]{MR3782809}. One of the choices analysed 
in~\cite{MR3782809} is that of the \emph{Uniform Stratified Estimator} (USE), 
described in Theorem~\ref{thm:RE_AV} below. 
Let $F_R:x\mapsto\sum_{n=1}^{\lfloor x \rfloor}p_n$, $x>0$, be the distribution 
function of $R$ (where we denote $\lfloor x\rfloor=\sup\{k\in\Z:
k\leq x\}$), let $F_R^{-1}:u\mapsto\inf\{k\in\N:F_R(k)\geq u\}$, $u\in(0,1)$, 
be its generalised inverse. Put $\ov p_n = 1-F_R(n-1)$ for $n\in\N$ and recall 
$\C(n)$ defined in Theorem~\ref{thm:MLMC} above.

\begin{thm}[{\cite[Thm~19]{MR3782809}}]\label{thm:RE_AV} 
For some fixed $N\in\N$ let $(U_k)_{k\in\{1,\ldots,N\}}$ be independent with
$U_{k}\sim \U(\frac{k-1}{N},\frac{k}{N})$ and put $R_{k}=F_R^{-1}(U_{k})$ 
for $k\in\{1,\ldots,N\}$.\\
\quad{\normalfont(a)} Assume $\sum_{n=1}^\infty\E[(P_n-P_{n-1})^2]/p_n<\infty$ 
and define $N_j=\sum_{k=1}^N\1{R_k=j}$ whose mean is $\E N_j=Np_j$. Then $\hat 
P_{\STE,N}$ in~\eqref{eq:Matti_estimator} is the uniform stratified STE satisfying 
$\E\hat P_{\STE,N}=\E P$ and $\lim_{N\to\infty}N\V[\hat{P}_{\STE,N}]=\sum_{n=1}^\infty 
\V[P_n-P_{n-1}]/p_n$ with cost $N\sum_{n=1}^\infty p_n\C(n)$.\\
\quad{\normalfont(b)} Assume $\sum_{n=1}^\infty \E[(P-P_{n-1})^2]/\ov{p}_n
<\infty$ and define $N_j=\sum_{k=1}^N \1{R_k\geq j}$ whose mean is $\E N_j=
N\ov p_j$. Then $\hat{P}_{\ISE,N}$ in~\eqref{eq:Matti_estimator} is the 
uniform stratified ISE satisfying $\E\hat{P}_{\ISE,N}=\E P$ and 
$\lim_{N\to\infty}N\V[\hat{P}_{\ISE,N}]=\sum_{n=1}^\infty
(\V[P-P_{n-1}]-\V[P-P_{n}])/\ov p_n$ with cost $N\sum_{n=1}^\infty\ov p_n\C(n)$.
\end{thm}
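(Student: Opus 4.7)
The plan is to reinterpret each estimator as an empirical mean of independent per-sample quantities and then analyse the variance via the law of total variance conditional on the $R$-values. I would introduce
\[ Y_j^{\STE}=D_{R_j}^{(j)}/p_{R_j},\qquad Y_j^{\ISE}=\sum_{k=1}^{R_j}D_k^{(j)}/\ov p_k, \]
so that $\hat P_{\bullet,N}=N^{-1}\sum_j Y_j^\bullet$; in~(a) the draw $D_{R_j}^{(j)}$ is independent of everything else, while in~(b) the increments $D_1^{(j)},\dots,D_{R_j}^{(j)}$ within one sample are coupled via a single MLMC trajectory, with full independence across samples. The means $\E N_j=Np_j$ (a) and $\E N_j=N\ov p_j$ (b) would be verified first by using that the strata $((k-1)/N,k/N]$ partition $[0,1]$ to write $\sum_k\P(F_R^{-1}(U_k)=j)=N\lvert(F_R(j-1),F_R(j)]\rvert=Np_j$, and likewise for $\sum_k\P(R_k\ge j)$. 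Unbiasedness would then follow by conditioning on $R_j$, summing the stratum-dependent marginals over $j$, and telescoping $\sum_n(\E P_n-\E P_{n-1})=\E P$.

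For the variance I would exploit the fact that the $Y_j^\bullet$'s are independent across $j$ (both trajectories and $U_j$'s are independent) to obtain, via the within-sample law of total variance followed by summation,
\[ N\,\V[\hat P_{\bullet,N}]=\E[\V(Y^\bullet\mid R)]+\frac{1}{N}\sum_{j=1}^N\V[g_\bullet(R_j)],\qquad g_\bullet(r):=\E[Y_j^\bullet\mid R_j=r]. \]
The central observation is that uniform stratification localises each $U_j$ inside an interval of length $1/N$, so $R_j$ is deterministic except on the few strata straddling an atom of $F_R$; under the standing $L^2$-summability in~(a) or~(b), a controlled bookkeeping over these straddling strata yields $\sum_j\V[g_\bullet(R_j)]=o(N)$. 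The stratification term therefore vanishes and $N\V[\hat P_{\bullet,N}]\to\E[\V(Y^\bullet\mid R)]$ as $N\to\infty$.

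It then remains to evaluate $\E[\V(Y^\bullet\mid R)]$ explicitly. In case~(a) the computation is immediate: $\V(Y^{\STE}\mid R=n)=\V[D_n]/p_n^2$, and averaging against the law of $R$ gives $\sum_n\V[P_n-P_{n-1}]/p_n$. In case~(b) I would use $\E[\V(Y^{\ISE}\mid R)]=\E[(Y^{\ISE})^2]-\E[g_{\ISE}(R)^2]$ (the mean-squared terms cancel) and compute each moment by a double-sum swap. Independence of $R$ and the trajectory, followed by $\sum_{l>k}\E[\Delta P_k\Delta P_l]=\E[\Delta P_k(P-P_k)]$ and the algebraic factorisation $\Delta P_k\cdot(\Delta P_k+2(P-P_k))=(P-P_{k-1})^2-(P-P_k)^2$, should yield
\[ \E[(Y^{\ISE})^2]=\sum_k\E[(P-P_{k-1})^2-(P-P_k)^2]/\ov p_k, \]
while the parallel computation with first moments, using $\mu_k=\nu_{k-1}-\nu_k$ for $\nu_k:=\E[P-P_k]$ and the factorisation $\mu_k(\mu_k+2\nu_k)=\nu_{k-1}^2-\nu_k^2$, gives $\E[g_{\ISE}(R)^2]=\sum_k\{(\E[P-P_{k-1}])^2-(\E[P-P_k])^2\}/\ov p_k$; differencing the two expressions produces exactly $\sum_k(\V[P-P_{k-1}]-\V[P-P_k])/\ov p_k$. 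The cost bound is immediate by linearity of expectation: sample $j$ incurs cost $\C(R_j)$ in~(a) and incremental costs summing to $\sum_{k\le R_j}\C(k)$ in~(b), so a swap-of-sums yields $N\sum_n p_n\C(n)$ and $N\sum_n\ov p_n\C(n)$, respectively.

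The main technical difficulty will be the stratification estimate $\sum_j\V[g_\bullet(R_j)]=o(N)$ when $g_\bullet$ is unbounded: each boundary-straddling stratum contributes a term controlled by the local oscillation of $g_\bullet$ across the neighbouring atoms of $F_R$, and tying this bound to the stated $L^2$-type summability on the series will require a Hardy-type rearrangement of the tails $\sum_n\E[D_n^2]/p_n$ (a) or $\sum_n\E[(P-P_{n-1})^2]/\ov p_n$ (b), carefully distinguishing the contributions of atoms with $p_n\gg 1/N$ from those with $p_n\lesssim 1/N$.
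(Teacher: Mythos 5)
You should first note that the paper contains no proof of this statement: Theorem~\ref{thm:RE_AV} is quoted, with adapted notation, from \citep[Thm~19]{MR3782809}, so your attempt can only be compared with the cited source rather than with an argument in the paper. Your outline is essentially the standard proof of that result and is correct: the per-sample rewriting $\hat P_{\bullet,N}=N^{-1}\sum_j Y_j^\bullet$, the exact (for every $N$) identity $N\V[\hat P_{\bullet,N}]=\E[\V(Y^\bullet\mid R)]+N^{-1}\sum_j\V[g_\bullet(R_j)]$ — which holds because the $N$ strata average out to the law of $R$, the same observation that gives $\E N_j$ and unbiasedness — and the evaluation of $\E[\V(Y^\bullet\mid R)]$ via the telescoping factorisation $\Delta_k^2+2\Delta_k(P-P_k)=(P-P_{k-1})^2-(P-P_k)^2$ all go through under the stated summability hypotheses. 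One point deserves emphasis: your decision to read the ISE as a \emph{coupled} sum within each sample (the increments $D_1^{(j)},\dots,D_{R_j}^{(j)}$ coming from a single trajectory) is exactly what the stated limit requires. If one instead takes the literal reading of~\eqref{eq:Matti_estimator}, with all $D_k^i$ independent over both indices as in Theorem~\ref{thm:MLMC}, the identical computation yields $\sum_n\V[P_n-P_{n-1}]/\ov p_n$, which differs from the stated $\sum_n(\V[P-P_{n-1}]-\V[P-P_n])/\ov p_n$ by the terms $2\,\mathrm{Cov}(P_n-P_{n-1},P-P_n)/\ov p_n$; so your choice is the right one for the formula as stated (and matches the stratified coupled-sum estimator of the cited paper), though it sits in mild tension with how the appendix introduces the $D_k^i$.

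The step you single out as the main technical difficulty is in fact much easier than you anticipate, and none of the atom-by-atom bookkeeping or Hardy-type rearrangement is needed. Under the assumptions, $W=g_\bullet(F_R^{-1}(U))$ with $U\sim U(0,1)$ lies in $L^2(0,1)$: for the STE, $\E[g_\STE(R)^2]=\sum_n(\E[P_n-P_{n-1}])^2/p_n\le\sum_n\E[(P_n-P_{n-1})^2]/p_n<\infty$, and for the ISE the same telescoping computation applied to the means gives finiteness under the assumption in (b). Then
\begin{equation*}
\frac{1}{N}\sum_{j=1}^N\V[g_\bullet(R_j)]=\E[W^2]-\E\big[\E(W\mid\mathcal{F}_N)^2\big],
\end{equation*}
where $\mathcal{F}_N$ is generated by the mesh-$1/N$ partition of $(0,1)$, and since the piecewise-constant conditional expectations of an $L^2(0,1)$ function converge to it in mean square as the mesh tends to zero, this term vanishes as $N\to\infty$ with no further structure of $F_R$ required. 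Beyond that, the only items left implicit in your sketch are routine interchanges of expectation with the infinite sums (Fubini or uniform integrability of the truncated sums), which are justified by the same summability assumptions; with those included, your argument is a complete and faithful reconstruction of the cited proof.
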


\begin{rem}\label{rem:Matti}
The asymptotic inverse relative efficiencies 
(see~\cite[Sec.~6,~p.~12]{MR3782809} for definition) of STE and ISE, denoted 
by $IRE_\STE$ and $IRE_\ISE$, respectively, are given by
\begin{equation*}\begin{split}
IRE_\STE & =  \left(\sum_{n=1}^\infty \frac{\V[P_n-P_{n-1}]}{p_n} \right)
\left( \sum_{n=1}^\infty p_n \C(n)\right) 
\geq \sum_{n=1}^\infty \sqrt{V_\STE(n)\C(n)}\\
IRE_\ISE & =  \left(\sum_{n=1}^\infty 
\frac{\V[P-P_{n-1}]-\V[P-P_{n}]}{\ov{p}_n} \right)
\left( \sum_{n=1}^\infty \ov{p}_n \C(n)\right)
\geq \sum_{n=1}^\infty \sqrt{V_\ISE(n)\C(n)},
\end{split}\end{equation*}
where $V_\STE(n)=\V[P_n-P_{n-1}]$, $V_\ISE(n)=\V[P-P_{n-1}]-\V[P-P_n]$. The 
lower bounds follow from the Cauchy-Schwarz inequality, do not depend on the 
choice of the law $(p_n)_{n\in\N}$ and are attained by taking 
\begin{equation}
\label{eq:optimal_prob_unbiased}
p_n^\STE = \frac{ \sqrt{V_\STE(n)/\C(n)}}{\sum_{k=1}^\infty\sqrt{V_\STE(k)/\C(k)}}
\quad\text{and}\quad
\ov p_n^\ISE = \frac{ \sqrt{V_\ISE(n)/\C(n)}}{\sum_{k=1}^\infty\sqrt{V_\ISE(k)/\C(k)}}.
\end{equation}
Hence these choices are clearly optimal. 
\end{rem}

\section{\label{sec:Reg}Regularity of the density of the supremum $\ov X_T$}
In this appendix we discuss the necessity of the  Assumption~\ref{asm:H} 
in Proposition~\ref{prop:barrier}.

\begin{ex}\label{ex:NotLip}
For any $\gamma\in(0,1)$ there exists a L\'evy process $X$ with an absolutely continuous 
L\'evy measure $\nu$ such that 
$\liminf_{u\downarrow 0}u^{\alpha-2}\ov{\sigma}^2(u)>0$ 
holds for some $\alpha\in(0,1)$ and
Assumption~\ref{asm:H} fails for $\gamma$ at countably many $M>0$.
\end{ex}

Recall $\ov{\sigma}^2(\kappa)=\int_{(-\kappa,\kappa)}x^2\nu(dx)$ for 
$\kappa\in(0,1)$ and note that $X$ in Example~\ref{ex:NotLip} has smooth 
transition densities by~\cite[Prop.~28.3]{MR3185174}.

\begin{proof}
The essence of the proof is to construct any such $M$ as a 
singularity of the density of $\nu$. For simplicity and to make 
things explicit, we shall prove it for a single and fixed $M>0$. 
To that end, let $S$ be an $\alpha$-stable process with positivity 
parameter $\rho=\P(S_1>0)\in(0,1)$ satisfying 
$\alpha\rho+\alpha+\rho<\gamma$. Let $Z$ be an independent 
L\'evy process with finite L\'evy measure $\nu_Z$ given by 
$\nu_Z((-\infty,x]\setminus\{0\}) = \min\{1,(\max\{x,M\}-M)^\rho\}$ 
and put $X=S+Z$. Hereafter consider only small enough 
$\epsilon>0$, namely, 
$\epsilon<\min\{(T/2)^{1/\alpha},\min\{M,1\}/2\}$. Our goal is to 
bound from below the probability 
$\P(\ov{X}_T\in [M,M+3\epsilon))$. To do this, we consider the 
event where $Z$ jumps exactly once, $S$ is small, $\ov{S}\leq M$ 
at the time of that jump and $S$ does not increase too much after 
the jump.

Since the density of $S_1$ is positive, continuous and bounded, 
it follows from the scaling property that there is some constant 
$K_1>0$ (not depending on $\epsilon$) such that for all 
$t\leq \epsilon^\alpha$, 
\[
\P (S_{t}\in[0,\epsilon),\ov{S}_{t}\leq M)=
\P (S_{1}\in [0,t^{-1/\alpha}\epsilon),\ov{S}_1\leq t^{-1/\alpha}M)\geq K_1.
\]
From~\cite[Thm~4A]{MR0415780}, we also know that $\P
(\ov{S}_t\leq\epsilon)\geq K_2 \epsilon^{\alpha\rho}$ for some constant
$K_2>0$ and all $t>T-\epsilon^\alpha/2$. Now, $Z_T\in [M,M+\epsilon)$ has
probability $e^{-T}T\epsilon^\rho$ since it can only happen if $Z$ had a
single jump on $[0,T]$, whose time $U$ is then conditionally distributed
$\U(0,T)$. For fixed $t\in(0,T)$, the Markov property gives 
\[
\P\bigg[\sup_{s\in [0,T-t]}S_{s+t}-S_t\in A,(S_t,\ov{S}_t)\in B\times C\bigg]
=\P[\ov{S}_{T-t}\in A]\P[(S_t,\ov{S}_t)\in B\times C],
\]
for all measurable $A,B,C\subset\R$. Hence, multiplying by the density of $U$ at $t$, integrating and using the independence of $(U,Z)$ and $S$, we obtain
\[\begin{split}
& \P(\ov{X}_T\in [M,M+3\epsilon))\geq \P(Z_T\in [M,M+\epsilon),
S_{U}\in[0,\epsilon),\ov{S}_{U}\leq M,\ov{X}_T\in [M,M+3\epsilon)) \\
&\geq e^{-T}T\epsilon^\rho\int_0^T\P\bigg(
\sup_{s\in[0,T-t]}S_{s+t}-S_t\leq\epsilon,S_t\in[0,\epsilon),\ov{S}_t\leq M
\Big| Z_T\in[M,M+\epsilon),U=t\bigg)\frac{dt}{T} \\
&\geq  e^{-T}\epsilon^\rho\int_0^{\epsilon^\alpha}
\P(\ov{S}_{T-t}\leq\epsilon)\P(S_{t}\in[0,\epsilon),\ov{S}_t\leq M)dt 
\geq e^{-T}K_1K_2\epsilon^{\alpha\rho + \alpha+\rho}.
\end{split}\]
This implies that $x\mapsto\P(\ov X_T\leq x)$ is not locally $\gamma$-H\"older 
continuous at $M$. 
\end{proof}


\bibliographystyle{amsalpha}
\bibliography{References}

\newcommand{\etalchar}[1]{$^{#1}$}
\providecommand{\bysame}{\leavevmode\hbox to3em{\hrulefill}\thinspace}
\providecommand{\MR}{\relax\ifhmode\unskip\space\fi MR }
\providecommand{\MRhref}[2]{%
  \href{http://www.ams.org/mathscinet-getitem?mr=#1}{#2}
}
\providecommand{\href}[2]{#2}
\begin{thebibliography}{GCMUB21}

\bibitem[AA10]{MR2766220}
S{\o}ren Asmussen and Hansj\"{o}rg Albrecher, \emph{Ruin probabilities}, second
  ed., Advanced Series on Statistical Science \& Applied Probability, vol.~14,
  World Scientific Publishing Co. Pte. Ltd., Hackensack, NJ, 2010. \MR{2766220}

\bibitem[AGP95]{MR1384357}
S{\o}ren Asmussen, Peter Glynn, and Jim Pitman, \emph{Discretization error in
  simulation of one-dimensional reflecting {B}rownian motion}, Ann. Appl.
  Probab. \textbf{5} (1995), no.~4, 875--896. \MR{1384357}

\bibitem[Asm03]{MR1978607}
S{\o}ren Asmussen, \emph{Applied probability and queues}, second ed.,
  Applications of Mathematics (New York), vol.~51, Springer-Verlag, New York,
  2003, Stochastic Modelling and Applied Probability. \MR{1978607}

\bibitem[BDP11]{MR2932671}
Violetta Bernyk, Robert~C. Dalang, and Goran Peskir, \emph{Predicting the
  ultimate supremum of a stable {L}\'evy process with no negative jumps}, Ann.
  Probab. \textbf{39} (2011), no.~6, 2385--2423. \MR{2932671}

\bibitem[Ber96]{MR1406564}
Jean Bertoin, \emph{L\'evy processes}, Cambridge Tracts in Mathematics, vol.
  121, Cambridge University Press, Cambridge, 1996. \MR{1406564}

\bibitem[BG61]{MR0123362}
R.~M. Blumenthal and R.~K. Getoor, \emph{Sample functions of stochastic
  processes with stationary independent increments}, J. Math. Mech. \textbf{10}
  (1961), 493--516. \MR{0123362}

\bibitem[BGK97]{MR1482707}
Mark Broadie, Paul Glasserman, and Steven Kou, \emph{A continuity correction
  for discrete barrier options}, Math. Finance \textbf{7} (1997), no.~4,
  325--349. \MR{1482707}

\bibitem[BGK99]{MR1805321}
Mark Broadie, Paul Glasserman, and S.~G. Kou, \emph{Connecting discrete and
  continuous path-dependent options}, Finance Stoch. \textbf{3} (1999), no.~1,
  55--82. \MR{1805321}

\bibitem[BGT89]{MR1015093}
N.~H. Bingham, C.~M. Goldie, and J.~L. Teugels, \emph{Regular variation},
  Encyclopedia of Mathematics and its Applications, vol.~27, Cambridge
  University Press, Cambridge, 1989. \MR{1015093}

\bibitem[BI20]{ZoomIn}
Krzysztof Bisewski and Jevgenijs Ivanovs, \emph{Zooming-in on a lévy process:
  failure to observe threshold exceedance over a dense grid}, Electron. J.
  Probab. \textbf{25} (2020), 33 pp.

\bibitem[Bil99]{MR1700749}
Patrick Billingsley, \emph{Convergence of probability measures}, second ed.,
  Wiley Series in Probability and Statistics: Probability and Statistics, John
  Wiley \& Sons, Inc., New York, 1999, A Wiley-Interscience Publication.
  \MR{1700749}

\bibitem[Bin73]{MR0415780}
N.~H. Bingham, \emph{Maxima of sums of random variables and suprema of stable
  processes}, Z. Wahrscheinlichkeitstheorie und Verw. Gebiete \textbf{26}
  (1973), no.~4, 273--296. \MR{0415780 (54 \#3859)}

\bibitem[BL02]{MR1936593}
Svetlana Boyarchenko and Sergei Levendorski\u{\i}, \emph{Barrier options and
  touch-and-out options under regular {L}\'{e}vy processes of exponential
  type}, Ann. Appl. Probab. \textbf{12} (2002), no.~4, 1261--1298. \MR{1936593}

\bibitem[BvS14]{MR3273683}
Erik~J. Baurdoux and Kees van Schaik, \emph{Predicting the time at which a
  {L}\'{e}vy process attains its ultimate supremum}, Acta Appl. Math.
  \textbf{134} (2014), 21--44. \MR{3273683}

\bibitem[CGST11]{MR2835612}
K.~A. Cliffe, M.~B. Giles, R.~Scheichl, and A.~L. Teckentrup, \emph{Multilevel
  {M}onte {C}arlo methods and applications to elliptic {PDE}s with random
  coefficients}, Comput. Vis. Sci. \textbf{14} (2011), no.~1, 3--15.
  \MR{2835612}

\bibitem[Cha13]{MR3098676}
L.~Chaumont, \emph{On the law of the supremum of {L}\'evy processes}, Ann.
  Probab. \textbf{41} (2013), no.~3A, 1191--1217. \MR{3098676}

\bibitem[Che08]{ConfIntBoundedRV}
Xinjia Chen, \emph{Confidence interval for the mean of a bounded random
  variable and its applications in point estimation}, arXiv:0802.3458
  [math.ST].

\bibitem[Che11]{MR2996014}
Ao~Chen, \emph{Sampling error of the supremum of a {L}evy process}, ProQuest
  LLC, Ann Arbor, MI, 2011, Thesis (Ph.D.)--University of Illinois at
  Urbana-Champaign. \MR{2996014}

\bibitem[CM16]{MR3531705}
Lo\"ic Chaumont and Jacek Ma{\l}ecki, \emph{On the asymptotic behavior of the
  density of the supremum of {L}\'evy processes}, Ann. Inst. Henri Poincar\'e
  Probab. Stat. \textbf{52} (2016), no.~3, 1178--1195. \MR{3531705}

\bibitem[CMDS{\etalchar{+}}13]{KyprianouCoCos}
Jos\'e~Manuel Corcuera, Dilip~B. Madan, Jan De~Spiegeleer, Andreas~E.
  Kyprianou, and Wim Schoutens, \emph{Pricing of contingent convertibles under
  smile conform models}, no.~3, 121–140.

\bibitem[Coh13]{MR3098996}
Donald~L. Cohn, \emph{Measure theory}, second ed., Birkh\"{a}user Advanced
  Texts: Basler Lehrb\"{u}cher. [Birkh\"{a}user Advanced Texts: Basel
  Textbooks], Birkh\"{a}user/Springer, New York, 2013. \MR{3098996}

\bibitem[CT04]{MR2042661}
Rama Cont and Peter Tankov, \emph{Financial modelling with jump processes},
  Chapman \& Hall/CRC Financial Mathematics Series, Chapman \& Hall/CRC, Boca
  Raton, FL, 2004. \MR{2042661}

\bibitem[Der11]{MR2759203}
Steffen Dereich, \emph{Multilevel {M}onte {C}arlo algorithms for
  {L}\'{e}vy-driven {SDE}s with {G}aussian correction}, Ann. Appl. Probab.
  \textbf{21} (2011), no.~1, 283--311. \MR{2759203}

\bibitem[DH11]{MR2802466}
Steffen Dereich and Felix Heidenreich, \emph{A multilevel {M}onte {C}arlo
  algorithm for {L}\'{e}vy-driven stochastic differential equations},
  Stochastic Process. Appl. \textbf{121} (2011), no.~7, 1565--1587.
  \MR{2802466}

\bibitem[DL11a]{MR2867949}
E.~H.~A. Dia and D.~Lamberton, \emph{Connecting discrete and continuous
  lookback or hindsight options in exponential {L}\'evy models}, Adv. in Appl.
  Probab. \textbf{43} (2011), no.~4, 1136--1165. \MR{2867949}

\bibitem[DL11b]{MR2851060}
El~Hadj~Aly Dia and Damien Lamberton, \emph{Continuity correction for barrier
  options in jump-diffusion models}, SIAM J. Financial Math. \textbf{2} (2011),
  no.~1, 866--900. \MR{2851060}

\bibitem[FCKSS14]{MR3138603}
A.~Ferreiro-Castilla, A.~E. Kyprianou, R.~Scheichl, and G.~Suryanarayana,
  \emph{Multilevel {M}onte {C}arlo simulation for {L}\'evy processes based on
  the {W}iener-{H}opf factorisation}, Stochastic Process. Appl. \textbf{124}
  (2014), no.~2, 985--1010. \MR{3138603}

\bibitem[GCMUB18]{Jorge_GitHub}
Jorge Gonz\'alez~C\'azares, Aleksandar Mijatovi\'c, and Ger\'onimo Uribe~Bravo,
  \emph{Code for the simulation of the triplet {$(X_T,\overline X_T,
  \tau_T)$}}, \url{https://github.com/jorgeignaciogc/LevySupSim.jl}, 2018,
  GitHub repository.

\bibitem[GCMUB19]{MR4032169}
Jorge~I. Gonz\'{a}lez~C\'{a}zares, Aleksandar Mijatovi\'{c}, and Ger\'{o}nimo
  Uribe~Bravo, \emph{Exact simulation of the extrema of stable processes}, Adv.
  in Appl. Probab. \textbf{51} (2019), no.~4, 967--993. \MR{4032169}

\bibitem[GCMUB21]{Presentation_AM}
Jorge~I. Gonz\'alez~C\'azares, Aleksandar Mijatovi\'c, and Ger\'onimo
  Uribe~Bravo, \emph{Presentation on ``{G}eometrically convergent simulation of
  the extrema of {L}\'evy processes''}, \url{https://youtu.be/P3vHmJUCFbU},
  2021, YouTube video.

\bibitem[Gil08]{MR2436856}
Michael~B. Giles, \emph{Multilevel {M}onte {C}arlo path simulation}, Oper. Res.
  \textbf{56} (2008), no.~3, 607--617. \MR{2436856}

\bibitem[GX17]{MR3723380}
Michael~B. Giles and Yuan Xia, \emph{Multilevel {M}onte {C}arlo for exponential
  {L}\'evy models}, Finance Stoch. \textbf{21} (2017), no.~4, 995--1026.
  \MR{3723380}

\bibitem[Hei01]{Heinrich_MLMC}
S.~Heinrich, \emph{Multilevel monte carlo methods}, Large-scale scientific
  computing, 2001, pp.~58--67.

\bibitem[Iva18]{MR3784492}
Jevgenijs Ivanovs, \emph{Zooming in on a {L}\'evy process at its supremum},
  Ann. Appl. Probab. \textbf{28} (2018), no.~2, 912--940. \MR{3784492}

\bibitem[JS03]{MR1943877}
Jean Jacod and Albert~N. Shiryaev, \emph{Limit theorems for stochastic
  processes}, second ed., Grundlehren der Mathematischen Wissenschaften
  [Fundamental Principles of Mathematical Sciences], vol. 288, Springer-Verlag,
  Berlin, 2003. \MR{1943877}

\bibitem[Kal02]{MR1876169}
Olav Kallenberg, \emph{Foundations of modern probability}, second ed.,
  Probability and its Applications (New York), Springer-Verlag, New York, 2002.
  \MR{1876169}

\bibitem[KKM04]{MR2099651}
Claudia Kl\"uppelberg, Andreas~E. Kyprianou, and Ross~A. Maller, \emph{Ruin
  probabilities and overshoots for general {L}\'evy insurance risk processes},
  Ann. Appl. Probab. \textbf{14} (2004), no.~4, 1766--1801. \MR{2099651}

\bibitem[KKPvS11]{MR2895413}
A.~Kuznetsov, A.~E. Kyprianou, J.~C. Pardo, and K.~van Schaik, \emph{A
  {W}iener-{H}opf {M}onte {C}arlo simulation technique for {L}\'evy processes},
  Ann. Appl. Probab. \textbf{21} (2011), no.~6, 2171--2190. \MR{2895413}

\bibitem[KKR12]{MR3014147}
Alexey Kuznetsov, Andreas~E. Kyprianou, and Victor Rivero, \emph{The theory of
  scale functions for spectrally negative {L}\'{e}vy processes}, L\'{e}vy
  matters {II}, Lecture Notes in Math., vol. 2061, Springer, Heidelberg, 2012,
  pp.~97--186. \MR{3014147}

\bibitem[KMR13]{MR3098066}
Mateusz Kwa\'snicki, Jacek Ma{\l}ecki, and Micha{\l} Ryznar, \emph{Suprema of
  {L}\'evy processes}, Ann. Probab. \textbf{41} (2013), no.~3B, 2047--2065.
  \MR{3098066}

\bibitem[McL11]{MR2890424}
Don McLeish, \emph{A general method for debiasing a {M}onte {C}arlo estimator},
  Monte Carlo Methods Appl. \textbf{17} (2011), no.~4, 301--315. \MR{2890424}

\bibitem[Mor02]{MR1932381}
Ernesto Mordecki, \emph{Optimal stopping and perpetual options for {L}\'evy
  processes}, Finance Stoch. \textbf{6} (2002), no.~4, 473--493. \MR{1932381}

\bibitem[MP12]{MijatovicDrawdown}
Aleksandar Mijatovi\'c and Martijn~R. Pistorius, \emph{On the drawdown of
  completely asymmetric {L}\'evy processes}, Stochastic Processes and their
  Applications \textbf{122} (2012), no.~11, 3812 -- 3836.

\bibitem[MP15]{MR3343283}
Aleksandar Mijatovi\'{c} and Martijn Pistorius, \emph{Buffer-overflows: joint
  limit laws of undershoots and overshoots of reflected processes}, Stochastic
  Process. Appl. \textbf{125} (2015), no.~8, 2937--2954. \MR{3343283}

\bibitem[Pet95]{MR1353441}
Valentin~V. Petrov, \emph{Limit theorems of probability theory}, Oxford Studies
  in Probability, vol.~4, The Clarendon Press, Oxford University Press, New
  York, 1995, Sequences of independent random variables, Oxford Science
  Publications. \MR{1353441}

\bibitem[PUB12]{MR2978134}
Jim Pitman and Ger\'onimo Uribe~Bravo, \emph{The convex minorant of a {L}\'evy
  process}, Ann. Probab. \textbf{40} (2012), no.~4, 1636--1674. \MR{2978134}

\bibitem[RG15]{MR3422533}
Chang-Han Rhee and Peter~W. Glynn, \emph{Unbiased estimation with square root
  convergence for {SDE} models}, Oper. Res. \textbf{63} (2015), no.~5,
  1026--1043. \MR{3422533}

\bibitem[Sat13]{MR3185174}
Ken-iti Sato, \emph{L\'evy processes and infinitely divisible distributions},
  Cambridge Studies in Advanced Mathematics, vol.~68, Cambridge University
  Press, Cambridge, 2013, Translated from the 1990 Japanese original, Revised
  edition of the 1999 English translation. \MR{3185174}

\bibitem[SC10]{schoutens2010levy}
W.~Schoutens and J.~Cariboni, \emph{Levy processes in credit risk}, The Wiley
  Finance Series, Wiley, 2010.

\bibitem[Sch03]{schoutens2003levy}
W.~Schoutens, \emph{Levy processes in finance: Pricing financial derivatives},
  Wiley Series in Probability and Statistics, Wiley, 2003.

\bibitem[SS03]{MR2023872}
Wim Schoutens and Stijn Symens, \emph{The pricing of exotic options by
  {M}onte-{C}arlo simulations in a {L}\'evy market with stochastic volatility},
  Int. J. Theor. Appl. Finance \textbf{6} (2003), no.~8, 839--864. \MR{2023872}

\bibitem[Vih18]{MR3782809}
Matti Vihola, \emph{Unbiased estimators and multilevel {M}onte {C}arlo}, Oper.
  Res. \textbf{66} (2018), no.~2, 448--462. \MR{3782809}

\end{thebibliography}

\section*{Acknowledgement} 
\noindent
JGC and AM are supported by The Alan Turing Institute under the EPSRC grant EP/N510129/1;
AM supported by EPSRC grant EP/P003818/1 and the Turing Fellowship funded by the Programme on Data-Centric Engineering of Lloyd's Register Foundation;
GUB supported by CoNaCyT grant FC-2016-1946 and UNAM-DGAPA-PAPIIT grant IN115217; 
JGC supported by CoNaCyT scholarship 2018-000009-01EXTF-00624 CVU 699336.
\end{document}